\DeclareMathOperator*{\argmin}{arg\,min}
\theoremstyle{plain}
\newtheorem{theorem}{Theorem}[section]
\newtheorem{lemma}{Lemma}[section]
\newtheorem*{lemma*}{Lemma}
\newtheorem*{theorem*}{Theorem}
\newtheorem{proposition}{Proposition}[section]
\theoremstyle{definition}
\newtheorem{definition}{Definition}[section]
\newtheorem{remark}{Remark}[section]
\newtheorem*{remark*}{Remark}
\newtheorem*{notation*}{Notation}
\newtheorem{Assumption}{H.\!\!}
\newcommand{\NN}{{\mathbb N}}
\newcommand{\RR}{{\mathbb R}}
\newcommand{\pref}[1]{(p\ref{#1})}
\title{Well-posedness and numerical schemes for one-dimensional McKean-Vlasov equations and interacting particle systems with discontinuous drift}
\author{Gunther Leobacher\footnote{Institute of Mathematics and Scientific Computing, University of Graz. 
Heinrichstraße 36, 8010 Graz, Austria, E-mail: gunther.leobacher@uni-graz.at}, Christoph Reisinger\footnote{Mathematical Institute, University of Oxford, Andrew Wiles Building, Woodstock Road, Oxford, OX2 6GG, UK, E-mail: christoph.reisinger@maths.ox.ac.uk}, and Wolfgang Stockinger\footnote{Mathematical Institute, University of Oxford, Andrew Wiles Building, Woodstock Road, Oxford, OX2 6GG, UK, E-mail: wolfgang.stockinger@maths.ox.ac.uk}}
\begin{document}

\maketitle

\begin{abstract}
\noindent
In this paper, we first establish well-posedness results for one-dimensional McKean-Vlasov stochastic differential equations (SDEs) and related particle systems with a measure-dependent drift coefficient that is discontinuous in the spatial component, and a diffusion coefficient which is a Lipschitz function of the state only.
We only require a fairly mild condition on the diffusion coefficient, namely to be non-zero in a point of discontinuity of the drift, while we need to impose certain structural assumptions on the measure-dependence of the drift.
Second, we study Euler-Maruyama type schemes for the particle system to approximate the solution of the 
one-dimensional McKean-Vlasov SDE. 
Here, we will prove strong convergence results in terms of the number of time-steps and number of particles.
Due to the discontinuity of the drift, the convergence analysis is non-standard and the usual strong convergence order $1/2$ known for the Lipschitz case cannot be recovered for all presented schemes.  
\end{abstract}

\bigskip
\noindent
{\bf Keywords}:
\textit{McKean-Vlasov equations, interacting particle systems, strong solutions, numerical schemes for SDEs, discontinuous drift} \\ \\
\noindent 
{\bf MSC 2010}: 
65C20, 65C30, 65C35, 60H30, 60H35, 60K40

\section{Introduction}
In this article, we study the existence and uniqueness of strong solutions for classes of McKean--Vlasov SDEs, where the drift exhibits a discontinuity in the spatial component.
We also provide time-stepping schemes of Euler--Maruyama type, for which we show strong convergence of a certain rate. 

A McKean--Vlasov equation (introduced in \cite{MK,MK2}) for a $d$-dimensional process $X=(X_t)_{t \in [0,T]}$, with a given finite time-horizon $T >0$, is an SDE where the underlying coefficients depend on the current state $X_t$ and, additionally, on the law of $X_t$. We consider more specifically the one-dimensional equation of the form
\begin{equation}\label{McKeanLimit}
    \mathrm{d}X_t =b(X_t, \mathcal{L}_{X_t}) \, \mathrm{d}t + \sigma(X_t) \, \mathrm{d}W_t, \quad X_0 = \xi \in L_2^{0}(\mathbb{R}),
\end{equation}
where $L_2^{0}(\mathbb{R})$ denotes the space of real-valued, $\mathcal{F}_0$-measurable random variables with second finite moments, $(W_t)_{t \in [0,T]}$ is a one-dimensional standard Brownian motion and $\mathcal{L}_{X_t}$ denotes the marginal law of the process $X$ at time $t \in [0,T]$. In particular, we are concerned with the well-posedness of equations of the form (\ref{McKeanLimit}), where $\RR \ni x \mapsto b(x,\mu)$ is discontinuous in zero, and piecewise Lipschitz on the subintervals $(-\infty,0)$ and $(0,\infty)$. Concerning the measure component of the drift, we will require global Lipschitz continuity with respect to the Wasserstein distance with quadratic cost denoted by $\mathcal{W}_2$ (see below for a precise definition). The diffusion term will be only state dependent and globally Lipschitz continuous. Our setting contrasts with the standard case with globally Lipschitz continuous coefficients, which is well-studied in the literature, both from an analytic and numerical perspective, e.g., in \cite{MEL}, \cite{AS} and \cite{BO}, respectively.

The study of SDEs and McKean--Vlasov equations with discontinuous drift is motivated by such models
in biology (see, e.g., \cite{FPZ}) and financial mathematics (see, e.g., Atlas models in equity markets \cite{KAR,JOU2} and dividend maximisation problems \cite{MICH}). 
Further, in stochastic control a discontinuous control can lead to equations with discontinuous drift (see \cite{MICH2}).
In the context of stochastic $N$-player games, non-smooth cost functions (such as the $\ell_1$-regularisation) or constraints on the size of the control process can result in discontinuous controls (bang-bang type optimal controls) and hence will give controlled state dynamics with discontinuous drift, as in \cite{GUO}.

We start our literature review with some key references on standard SDEs with irregular and discontinuous drift, namely \cite{AKZ,AVDIS,sz2016a,sz2016b}, and then proceed to discuss some recent articles on McKean--Vlasov SDEs with non-Lipschitz drift. Zvonkin \cite{AKZ} (for one-dimensional SDEs) and Veretennikov \cite{AVDIS} (for the multi-dimensional setting) prove the existence of a unique strong solution for an SDE where the drift is assumed to be measurable and bounded, but the diffusion coefficient $\sigma$ needs to satisfy rather strong assumptions, namely that it is bounded and uniformly elliptic, i.e., there is a $\lambda >0$ such that for all $x \in \mathbb{R}^{d}$ and all $v \in \mathbb{R}^{d}$, we have $v^{\top}\sigma(x)\sigma(x)^{\top}v \geq \lambda v^{\top}v$. An interesting addition to the aforementioned results in the case where the diffusion is not uniformly elliptic was established in the one-dimensional case in \cite{sz2016a}. The authors assume the drift coefficient to be piecewise Lipschitz and $\sigma$ to be globally Lipschitz with $\sigma(\eta) \neq 0$ for each of finitely many points of discontinuity $\eta$ of the drift. 
This condition guarantees that the process does not spend a positive amount of time in the singularity. Under these assumptions, by explicitly constructing a transformation that removes the singularities, the existence of a unique strong solution can be proven and a numerical procedure for solving this class of SDEs can be constructed.

The main contribution of \cite{sz2016b} is the extension of the one-dimensional case to the multi-dimensional setting under the assumption of piecewise Lipschitz continuity of the drift. In \cite{sz2016b}, the authors introduce a meaningful concept of piecewise Lipschitz continuity in higher dimensions, which is based on the notion of the so-called intrinsic metric. As already indicated by the one-dimensional case, there needs to be an intricate connection between the geometry of the set of discontinuities and the diffusion coefficient. We note that the exceptional set of singularities, denoted by $\Theta$, is assumed to be a $\mathcal{C}^{4}$ hypersurface and for the diffusion part one requires the following: There exists a constant $C>0$ such that $| \sigma(\eta)^{\top} n(\eta) | \geq C$ for all $\eta \in \Theta$, where $n(\eta)$ is orthogonal to the tangent space of $\Theta$ in $\eta$ and $| n(\eta)| =1$. Under these assumptions (and some additional technical conditions on the coefficients and on the geometry of $\Theta$) the existence of a unique strong solution for multi-dimensional SDEs with piecewise Lipschitz continuous drift can be proven. 

Moving on to McKean--Vlasov equations, the existence and uniqueness theory for strong solutions of such SDEs with coefficients of linear growth and Lipschitz type conditions (with respect to the state and the measure component) is well-established (see, e.g., \cite{AS,RC}). 
More general existence/uniqueness results for weak and strong solutions of McKean--Vlasov SDEs can be found in \cite{MVA,LACK,BBP}. The article \cite{BBP} is concerned with the weak and strong existence/uniqueness of one-dimensional equations with additive noise, where the drift is assumed to be measurable, continuous in the measure component with respect to the Monge-Kantorovich metric and further satisfies a linear growth condition. In \cite{MVA} and \cite{LACK}, a $d$-dimensional setting is considered, where the drift is assumed to be bounded, measureable (and possibly path-dependent) and Lipschitz continuous in the measure component with respect to the total variation distance. The diffusion is non-degenerate and independent of the measure. Under these assumptions (and some technical conditions) weak existence and uniqueness is proven. 

For further recent existence and uniqueness results for strong and weak solutions of McKean--Vlasov SDEs, including results concerning standard Lipschitz assumptions on the coefficients, we refer to \cite{HSS,JLHM,C1,RAY,RAY2,ROCK,FKM} and the references given therein.   

The numerical analysis of SDEs with discontinuous drifts has received significant attention over the last few years, see, e.g., \cite{sz2016a,sz2016b,sz2016c,NMSLS,MGLY219,MGLY19,HNK,NGO,CHA,DISC} and the references therein for well-posedness results and for strong and weak convergence rates of numerical schemes. 

In particular, in \cite{sz2016a} (for the one-dimensional case) and in \cite{sz2016b} (for the multi-dimensional setting) the standard strong convergence rate of order $1/2$ for a method derived from the Euler--Maruyama scheme was proven. However, the applicability of these schemes is limited as they require the explicit knowledge of a transformation (and its inverse) to map the SDE with discontinuous coefficients into one with Lipschitz continuous coefficients. In \cite{sz2016c}, an Euler--Maruyama scheme without the aforementioned transformation is introduced (in a multi-dimensional setting). While this scheme is easier to apply, the authors only show a strong convergence rate of order $1/4 - \varepsilon$ for any $\varepsilon>0$, imposing also the stronger assumption of boundedness for both coefficients of the underlying SDE. The central idea of \cite{sz2016c} is to quantify the probability that a multi-dimensional process is in a small neighbourhood of the set of discontinuities, using an occupation time formula. In the one-dimensional case, with coefficients of linear growth, these techniques were refined in \cite{MGLY19} and the expected strong convergence rate of order $1/2$ was recovered. Other recent works concerned with the numerical approximation of SDEs with discontinuous drifts include \cite{NMSLS,MGLY219}, where a higher order scheme and an adaptive time-stepping scheme were introduced, respectively. In \cite{DISC} a numerical scheme for classical one-dimensional diffusion processes generated by a differential operator involving discontinuous coefficients is presented. As the generator is non-local for McKean--Vlasov equations it seems a challenging problem to use these techniques in our framework.   

The simulation of McKean--Vlasov SDEs typically involves two steps: First, at each time $t$, the true measure $\mathcal{L}_{X_t}$ is approximated by the empirical measure
\begin{equation*}
 \mu_t^{\boldsymbol{X}^{N}}(\mathrm{d}x) := \frac{1}{N}\sum_{j=1}^{N} \delta_{X_t^{j,N}}(\mathrm{d}x),
\end{equation*}    
where $\delta_{x}$ denotes the Dirac measure at point $x$ and $(\boldsymbol{X}^{N}_t)_{ t \in [0,T]} = (X_t^{1,N}, \ldots, X_t^{N,N})_{t \in [0,T]}^{\top}$, an interacting particle system, is the solution to the $\mathbb{R}^{dN}$-dimensional SDE with components
\begin{equation*}
\mathrm{d}X_t^{i,N} = b(X_t^{i,N},  \mu_t^{\boldsymbol{X}^{N}} )  \, \mathrm{d}t + \sigma(X_t^{i,N}) \, \mathrm{d}W_t^{i}, \quad X_{0}^{i,N} = \xi^{i}.
\end{equation*}
Here, $W^{i} = (W_t^{i})_{t \in [0,T]}$ and $\xi^{i}$, for $i \in \lbrace 1, \ldots, N \rbrace$, are independent Brownian motions (also independent of $W$) and independent copies of $\xi$, respectively. 
In a next step, one needs to introduce a reasonable time-stepping method to discretise the particles $(X_t^{i,N})_{t \in [0,T]}$ over some finite time horizon $[0,T]$. Numerical schemes for interacting particle systems with H\"{o}lder continuous coefficients (in the state variable) and with coefficients satisfying certain assumptions
 on monotonicity (in the state variable) and Lipschitz continuity (in the measure variable), can be found in \cite{BAO} and \cite{RES,stock,CH} (and the references cited therein), respectively, where a strong convergence analysis is conducted. In \cite{BT,ANK}, a quantitative $L_p$-error analysis in terms of density and cumulative distribution function approximation is presented.
The survey \cite{BO} discusses several examples and numerical schemes for McKean--Vlasov equations involving singular drifts, e.g., a probabilistic interpretation of the Burgers equation, see also \cite{BT}, of the 2D-incompressible Navier--Stokes equation (see e.g., \cite{CHO,OSA}) and turbulent flow models \cite{POP}. Other examples of McKean--Vlasov equations with singular drifts appear in the Keller--Segel equation \cite{JOU}, the Coulomb gas model \cite{CEP}, the Thomson problem \cite{TP}, and the Stefan problem \cite{KR,KRSS}.

Our numerical schemes present an original approximation method which, as of now, is restricted to the specific case of a one-dimensional spatial and one-point discontinuous drift component, but provides, again in this specific framework, a suitable alternative to the methodical
mollification/cut-off approximation methods.

In this article, we first focus on the decomposable case, namely that
\[
b(x,\mu) = b_1(x) + b_2(x,\mu),
\]
where $b_1$ is piecewise Lipschitz continuous with a discontinuity in zero, and $b_2$ satisfies the usual Lipschitz assumptions in both components. This structure allows us to present the main ideas of the analysis to be used later in a more general setting, in particular a transformation of the state variable to remove the discontinuity. In this setting, we prove well-posedness of the McKean--Vlasov equation and the associated particle system. 
This structure includes the important class of McKean--Vlasov equations of the form 
\begin{equation}\label{eq:LIN}
\mathrm{d}X_t = \left( V(X_t) + \int_{\mathbb{R}} \beta(X_t-y) \, \mathcal{L}_{X_t}(\mathrm{d}y) \right) \, \mathrm{d}t + \sigma(X_t) \, \mathrm{d}W_t, \quad X_0 = \xi,
\end{equation}
where $V$ describes an external potential and $\beta$ an interaction kernel; see, e.g., \cite{HREN} and the references cited therein related to mean-field over-damped Langevin equations. These models also embed the class of self-stabilizing diffusions and the McKean--Vlasov model related to the granular media equation.

We then relax the structural assumption on decomposability slightly, but still have to require certain continuity of the measure derivatives at the points of discontinuity, which encompasses the above setting as a special case. The necessity for this condition arises due to the explicit measure or time dependence of the employed transformation. A future research direction concerns a setting where the point of discontinuity is time-dependent, or depends on the distribution of the process $(X_t)_{t \in [0,T]}$, which is relevant to study further practically important models, e.g., from \cite{JOU2}.

Having established the existence of a unique strong solution with bounded moments, we propose two Euler--Maruyama schemes for the particle systems as numerical approximations to the McKean--Vlasov equations. For an Euler--Maruyama scheme applied to the SDE in the transformed state, strong convergence of order 1/2 follows immediately, while for a direct time-discretisation of the particle system without transformation, we are only able to show order 1/9. Numerical tests indicate that this order is in general not sharp. We will discuss the reasons for this gap and possible improvements later.

The main contributions of the present article are as follows. First, we establish the well-posedness of McKean--Vlasov SDEs (with a certain discontinuity) and of their associated particle systems. Techniques from variational calculus on the measure space $\mathcal{P}(\RR)$ equipped with the Wasserstein distance $\mathcal{W}_2$ will be essential in the proofs, due to the possible measure dependence of 
the transformation applied to the processes as described above. The second central contribution of the present paper is the development of numerical schemes for approximating such McKean--Vlasov SDEs and their associated particle systems. Here, a non-standard strong convergence analysis based on occupation time estimates of the discretised processes in a neighbourhood of the discontinuity will be presented. 

The remainder of the paper is organised as follows: In Section \ref{sec:Prelim}, we collect all preliminary tools and notions needed throughout the paper. The precise problem description and the main results are presented in Section \ref{sec:Well-Posed}. Then, Section \ref{sec:isolatednumerics} discusses numerical schemes for McKean--Vlasov SDEs with discontinuous drift. We show strong convergence of certain orders with respect to the number of particles and time-steps, respectively. In Section \ref{SEC:NUM}, we apply our numerical scheme to a model problem arising in neuroscience \cite{FPZ} and to a slight modification of a mean-field game in systemic risk \cite{XGUO,CARM}.
 
\section{Preliminaries}\label{sec:Prelim}

In the sequel, we will introduce several concepts and notions, which will be needed throughout this article. In addition, we will give a brief introduction to the so-called Lions derivative (abbreviated by $L$-derivative), which allows us to define a derivative with respect to measures of the space $\mathcal{P}_2(\RR)$ (see below for a precise definition). Also, we recall the transformation used to cope with drifts having discontinuities in a given finite number of points and first developed in \cite{sz2016a}. We give a summary of important properties of this mapping. Note that generic constants used in this article are denoted by $C>0$. They are independent of the number of particles and number of time-steps, and might change their values from line to line.

\subsection{Notions and notation}
We start with introducing some notions and fixing the notation.
\begin{itemize}
\item Throughout this article, $(\Omega,\mathcal{F},(\mathcal{F}_t)_{t \in [0,T]},\mathbb{P})$ will denote a filtered probability space, where $(\mathcal{F}_t)_{t \in [0,T]}$ is the natural filtration of $W$ augmented with an independent $\sigma$-algebra $\mathcal{F}_0$ and $(\Omega,\mathcal{F},\mathbb{P})$ is assumed to be atomless. 
\item $(\mathbb{R}^d,\left \langle \cdot,\cdot \right \rangle, |\cdot|)$ represents the $d$-dimensional ($d \geq 1$) Euclidean space. As a matrix-norm, we will use $\| A \| := \sup_{v \in \RR^{d}}|Av|$, for any $A \in \RR^{d \times d}$.
\item We use $\mathcal{P}(\mathbb{R})$ to denote the family of all probability
measures on $(\mathbb{R},\mathcal{B}(\mathbb{R}))$, where $\mathcal{B}(\mathbb{R})$ denotes the Borel $\sigma$-field over $\mathbb{R}$ and define the subset of probability measures with finite second moment by
\begin{equation*}
\mathcal{P}_2(\mathbb{R}):= \Big \{ \mu\in \mathcal
{P}(\mathbb{R}) : \ \int_{\mathbb{R}} |x|^2 \mu(\mathrm{d} x)<\infty \Big \}.
\end{equation*} 
\item We recall the definition of the standard Wasserstein distance with quadratic cost: For any $\mu, \nu \in \mathcal{P}_2(\mathbb{R})$, we define
\begin{equation*}
\mathcal{W}_2(\mu, \nu) := \left(\inf_{\pi \in \Pi(\mu,\nu)} \int_{\mathbb{R} \times \mathbb{R}} |x-y |^2 \pi(\mathrm{d}x,\mathrm{d}y) \right)^{1/2},
\end{equation*}
where $\Pi(\mu,\nu)$ denotes the set of all couplings between $\mu$ and $\nu$.
\item For a given $p \geq 2$, $L_p^{0}(\mathbb{R})$ refers to the space of real-valued, $\mathcal{F}_0$-measurable random variables $X$ satisfying $\mathbb{E}[|X|^p] < \infty$ and for a terminal time $T>0$, $\mathcal{S}^p([0,T])$ refers to the space of real-valued continuous, $\mathbb{F}$-adapted processes, defined on the interval $[0,T]$, with finite $p$-th moments, i.e., processes $(X_t)_{t \in [0,T]}$ satisfying $\mathbb{E} \left[ \sup_{t \in [0,T]}|X_t|^p \right] < \infty$.
\end{itemize}
\noindent
We briefly introduce the $L$-derivative of a functional $f: \mathcal{P}_2(\mathbb{R}) \to \mathbb{R}$, as it will appear in the proofs presented in the main section. For further information on this concept, we refer to \cite{PLI} or \cite{BLPR}. Here, we follow the exposition of \cite{CD}. We will associate to the function $f$ a lifted function $\tilde{f}$, defined by $\tilde{f}(X)=f(\mathcal{L}_X)$, where $\mathcal{L}_X$ is the law of $X$, for $X \in L_2(\Omega, \mathcal{F},\mathbb{P};\mathbb{R})$.

This will allow us to introduce $L$-differentiability as Fr\'{e}chet derivative on the lifted space. In particular, a function $f$ on $\mathcal{P}_2(\mathbb{R})$ is said to be $L$-differentiable at $\mu_0 \in \mathcal{P}_2(\mathbb{R})$ if there exists a random variable $X_0 \in L_2(\Omega, \mathcal{F},\mathbb{P};\mathbb{R})$ with law $\mu_0$, such that the lifted function $\tilde{f}$ is Fr\'{e}chet differentiable at $X_0$. 

Now, the Riesz representation theorem implies that there is a ($\mathbb{P}$-a.s.) unique $\Phi \in L_2(\Omega, \mathcal{F},\mathbb{P};\mathbb{R})$ with
\begin{equation*}
\tilde{f}(X) = \tilde{f}(X_0) + \langle \Phi,  X-X_0 \rangle_{L_2}  + o(\| X-X_0\|_{L_2}), \text{ as } \| X-X_0\|_{L_2} \to 0,
\end{equation*}
with the standard inner product and norm on $L_2(\Omega, \mathcal{F},\mathbb{P};\mathbb{R})$. If $f$ is $L$-differentiable for all $\mu_0 \in \mathcal{P}_2(\mathbb{R})$, then we say that $f$ is $L$-differentiable.
 
It is known (see, e.g., \cite[Proposition 5.25]{CD}) that there exists a Borel measurable function $\chi: \mathbb{R} \to \mathbb{R}$, such that $\Phi =  \chi(X_0)$ almost surely, and hence
\begin{equation*}
f(\mathcal{L}_X) = f(\mathcal{L}_{X_0}) + \mathbb{E}\left\langle \chi(X_0), X -X_0 \right \rangle +o(\| X-X_0\|_{L_2}).
\end{equation*}   
Note that $\chi$ only depends on the law of $X_0$, but not on $X_0$ itself.   
We define $\partial_{\mu}f(\mathcal{L}_{X_0})(y):=\chi(y)$, $y \in \mathbb{R}$, as the $L$-derivative of $f$ at $\mu_0$. If, in addition, for a fixed $y \in \mathbb{R}$, there is a version of the mapping $\mathcal{P}_2(\mathbb{R}) \ni \mu \mapsto \partial_{\mu}f(\mu)(y)$ which is continuously $L$-differentiable, then the $L$-derivative of $\partial_{\mu}f(\cdot)(y): \mathcal{P}_2(\mathbb{R}) \to \mathbb{R}$, is defined as
\begin{equation*}
\partial^2_{\mu} f(\mu)(y,y'):= \partial_{\mu}(\partial_{\mu}f)(\cdot)(y)(\mu,y'),
\end{equation*}
for $(\mu,y,y') \in \mathcal{P}_2(\mathbb{R}) \times \mathbb{R} \times \mathbb{R}$. 

We require a definition describing regularity properties of a function $f: \mathcal{P}_2(\mathbb{R}) \to \mathbb{R}$ in terms of the measure derivative
(see \cite{CD,CCD}).  
\begin{definition}  
Let $f: \mathcal{P}_2(\mathbb{R}) \to \mathbb{R}$ be a given functional.
\begin{itemize}
\item We say that $f$ is an element of the class $\mathcal{C}^{(1,1)}_{b}$, if $f$ is continuously $L$-differentiable, for any $\mu$, there is a continuous version of the mapping $\mathbb{R} \ni y \mapsto \partial_{\mu} f(\mu)(y)$ and the derivatives
\begin{align*}
\partial_{\mu} f(\mu)(y), \quad \partial_{y} \lbrace \partial_{\mu} f(\mu)(\cdot) \rbrace (y),
\end{align*}
exist, are bounded and jointly continuous in the variables $(\mu,y)$ such that $y \in \rm{Supp}(\mu)$.
\item We say that $f$ is an element of the class $\mathcal{C}^{(2,1)}_{b}$, if it is an element of $\mathcal{C}^{(1,1)}_{b}$ and in addition the second order Lions derivative $\partial^2_{\mu} f(\mu)(y,y')$ exists, is bounded and is again jointly continuous in the corresponding variables. Also, the joint continuity of all derivatives is here required globally, i.e., for all $(\mu,y,y')$. 
\end{itemize}
\end{definition}
\noindent
We give the following additional remark, which links the $L$-derivative of functions of empirical measures to the standard partial derivatives of its empirical projections. For a functional $f: \mathcal{P}_2(\RR) \to \RR$, we associate with it the finite dimensional projection $f^N: \RR^N \to \RR$ defined as
\begin{equation*}
f^{N}(\boldsymbol{x}^{N}):=f\left(\frac{1}{N} \sum_{j=1}^N \delta_{x_j} \right),
\end{equation*}  
for $\boldsymbol{x}^{N}:=  (x_1, \ldots, x_N)$.
If $f \in \mathcal{C}^{(2,1)}_{b}$, then $f^{N}$ is twice differentiable (in a classical sense) and 
\begin{align*}
& \partial_{x_i} f^{N}(\boldsymbol{x}^{N}) = \frac{1}{N} \partial_{\mu}f\left(\frac{1}{N} \sum_{j=1}^N \delta_{x_j} \right)(x_i), \\
&  \partial_{x_i} \partial_{x_k} f^{N}(\boldsymbol{x}^{N}) = \frac{1}{N} \partial_y  \partial_{\mu}f\left(\frac{1}{N} \sum_{j=1}^N \delta_{x_j} \right)(x_i) \delta_{i,k} +  \frac{1}{N^2} \partial^2_{\mu}f\left(\frac{1}{N} \sum_{j=1}^N \delta_{x_j} \right)(x_i,x_k),
\end{align*}
where $\delta_{i,k}$ is the Kronecker delta, see, e.g., \cite[Proposition 5.35]{CD}.
\subsection{Properties of the transformation G}\label{Sec:transformProp}
In \cite{sz2016a}, the authors consider one-dimensional SDEs of the form 
\[
\mathrm{d}X_t=b(X_t) \, \mathrm{d}t+\sigma(X_t) \, \mathrm{d}W_t, \quad X_0 = x \in \RR,
\] 
with a piecewise Lipschitz continuous drift coefficient $b$ that is 
discontinuous in $K \in \NN$ points 
$\eta_1, \ldots, \eta_K$, and a Lipschitz diffusion
coefficient $\sigma$ that does not vanish in any $\eta_k$.
A mapping $G:\RR \to \RR$ is defined to transform the SDE into one for $Z=G(X)$  
with globally
Lipschitz continuous coefficients. For simplicity, we restrict the discussion to $K=1$
with $\eta_1=0$. We define the mapping $G$ by
\begin{align*}
G(x) := x + \alpha x |x| \phi\left( \frac{x}{c} \right),
\end{align*} 
where 
\begin{align*}
\phi(x):=
\begin{cases}
(1-x^2)^3, & |x|\le 1, \\
0, & |x|>1\, 
\end{cases}  
\quad \alpha: =\frac{b(0^{-})- b(0^{+})}{2\sigma^2(0)},
\end{align*}
and $c$ is a constant satisfying $0 < c < 1/|\alpha|$. The choice of $\alpha$ yields a Lipschitz continuous drift coefficient for the SDE of $Z=G(X)$, in particular, it removes the discontinuity in $0$ from the drift. The restriction on $c$ guarantees that $G$ possesses a global inverse. 

It is known from \cite{sz2016b} that $G$ satisfies the following properties: 
\begin{itemize}
\item $G$ is $\mathcal{C}^{1}(\RR,\RR)$ with $0 < \inf_{x \in \RR} G'(x) \leq \sup_{x \in \RR} G'(x) < \infty$. Therefore, $G$ is Lipschitz continuous and has an inverse $G^{-1}: \RR \to \RR$ that is Lipschitz continuous as well. 
\item The derivative $G'$ is Lipschitz continuous (i.e., also absolutely continuous). In addition, $G'$ has a bounded Lebesgue density $G'':\RR \to \RR$, which is Lipschitz continuous on each of the subintervals $(-\infty,0)$ and $(0,\infty)$. Also, It\^{o}'s formula can still be applied to $G$ and $G^{-1}$.
\end{itemize}

\section{Existence and uniqueness results}\label{sec:Well-Posed}

The following subsections are devoted to proving well-posedness results for certain classes of one-dimensional McKean--Vlasov SDEs with a drift having a discontinuity in zero. In a first step, we study a simple class where the resulting transformation will not depend on the measure. Here, the transformation techniques developed in \cite{sz2016a} will allow us to prove existence and uniqueness of a strong solution. The second class of McKean--Vlasov SDEs investigated below has the intrinsic difficulty that the required transformation will depend on the measure (i.e., will be time-dependent). Hence, a fixed-point iteration in the measure component will be required and we need to use techniques from variational calculus on the measure space $\mathcal{P}_2(\RR)$, in particular an It\^{o} formula for functionals acting on this space. 

For each of these classes of McKean--Vlasov SDEs, we will additionally study the well-posedness of their associated interacting particle system. Although they can be considered as $N$-dimensional classical SDEs, with $N$ denoting the number of particles, the resulting set of discontinuities of the $N$-dimensional drift cannot be handled by the main results of \cite{sz2016b}.

Future work is needed to extend the methods developed in this article to a multi-dimensional framework. In particular, it seems that the decomposable case can be generalised when discontinuities of the form discussed in \cite{sz2016b} are considered. 

\subsection{McKean--Vlasov SDEs and interacting particle systems with decomposable drift}

For a given terminal time $T >0$ and given $p \geq 2$, we consider a one-dimensional McKean--Vlasov SDE of the form 
\begin{equation}\label{eq:Model1}
\mathrm{d} X_t = b(X_t, \mathcal{L}_{X_t}) \, \mathrm{d}t  + \sigma(X_t) \, \mathrm{d}W_t, \quad \ X_0= \xi \in L_p^{0}(\mathbb{R}),
\end{equation}
where $b:\RR \times \mathcal{P}_2(\RR) \to \RR$ and $\sigma: \RR \to \RR$ are measurable functions.

In the following, we state the model assumptions which will specify the set-up for this subsection: 
\begin{Assumption}\label{Assum:A}
\begin{enumerate}[(1)]
    \item \label{Assum:A1}
    We have $\sigma(0) \neq 0$ and there exists a constant $L >0$ such that
    \begin{equation*}
       | \sigma(x)  - \sigma(x') | \leq L |x-x'| \quad \forall  x, x' \in \mathbb{R}.
    \end{equation*}      
    \item \label{Assum:A2} 
    The drift is decomposable
		in the following sense:
    \begin{equation*}
       b(x, \mu) = b_1(x) + b_2(x,\mu) \quad \forall  x \in \mathbb{R}, \ \forall \mu \in  \mathcal{P}_2(\mathbb{R}),
    \end{equation*}
	where $b_1: \mathbb{R} \to \mathbb{R}$ is Lipschitz continuous 
	 on the subintervals $(-\infty,0)$ and $(0,\infty)$ and there exists a constant $L_1>0$ such that
      \begin{equation*}
       | b_2(x,\mu)  - b_2(x',\nu) | \leq L_1 \left( |x-x'| + \mathcal{W}_2(\mu,\nu) \right) \quad \forall  x, x' \in \mathbb{R}, \ \forall \mu, \nu \in  \mathcal{P}_2(\mathbb{R}).
    \end{equation*}  
\end{enumerate} 
\end{Assumption}
\noindent
We now state the main results of this section:
\begin{proposition}\label{Prop1Ex}
Let Assumption (H.\ref{Assum:A}) be satisfied, let $\xi \in L_p^{0}(\mathbb{R})$ for a given $p \geq 2$ and assume $c < 1/|\alpha|$. Then, the McKean--Vlasov SDE defined in (\ref{eq:Model1}) has a unique strong solution in $\mathcal{S}^{p}([0,T])$.
\end{proposition}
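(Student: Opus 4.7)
\medskip

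\noindent
\textbf{Proof plan.} The essential observation is that in the decomposable setting the transformation $G$ from Section~\ref{Sec:transformProp}, built with $\alpha = (b_1(0^-) - b_1(0^+))/(2\sigma^2(0))$, depends only on $b_1$ and $\sigma$, hence is \emph{independent of the measure}. My plan is therefore to lift the SDE through $G$, solve the resulting globally Lipschitz McKean--Vlasov SDE by the standard theory, and then transfer the conclusion back to $X$.

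\medskip

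\noindent
\emph{Step 1: derivation of the transformed SDE.} For a candidate solution $X$, set $Z_t := G(X_t)$. Since $G \in \mathcal{C}^1(\RR,\RR)$ with $G'$ absolutely continuous and possessing a bounded Lebesgue density $G''$, It\^o's formula applies (as recalled in Section~\ref{Sec:transformProp}) and yields
\begin{equation*}
\mathrm{d}Z_t = \widetilde{b}(Z_t, \mathcal{L}_{Z_t}) \, \mathrm{d}t + \widetilde{\sigma}(Z_t) \, \mathrm{d}W_t, \qquad Z_0 = G(\xi),
\end{equation*}
where, writing $H := G^{-1}$ and $H_{\#}\nu$ for the push-forward of $\nu$ under $H$,
\begin{equation*}
\widetilde{\sigma}(z) := G'(H(z))\,\sigma(H(z)), \qquad
\widetilde{b}(z,\nu) := G'(H(z))\,b(H(z), H_{\#}\nu) + \tfrac{1}{2} G''(H(z))\,\sigma^2(H(z)).
\end{equation*}
Using $\mathcal{L}_{X_t} = H_{\#}\mathcal{L}_{Z_t}$, this is a genuine McKean--Vlasov equation in $Z$ only.

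\medskip

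\noindent
\emph{Step 2: Lipschitz regularity of the transformed coefficients.} The diffusion $\widetilde{\sigma}$ is Lipschitz because $G'$ is Lipschitz and bounded, $H$ is Lipschitz, and $\sigma$ is Lipschitz. For $\widetilde{b}$, split $b = b_1 + b_2$. The piece
\begin{equation*}
z \mapsto G'(H(z))\, b_1(H(z)) + \tfrac{1}{2} G''(H(z))\, \sigma^2(H(z))
\end{equation*}
is globally Lipschitz by exactly the same argument as in \cite{sz2016a, sz2016b}: it is Lipschitz on each of $\{z : H(z) < 0\}$ and $\{z : H(z) > 0\}$, and the choice of $\alpha$ forces the one-sided limits at $H(z)=0$ to coincide, eliminating the jump. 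For the measure-dependent part $G'(H(z))\, b_2(H(z), H_{\#}\nu)$, Lipschitz continuity in $z$ follows from composition of Lipschitz maps, and Lipschitz continuity in $\nu$ in $\mathcal{W}_2$ follows from (H.\ref{Assum:A})\eqref{Assum:A2} together with the bound $\mathcal{W}_2(H_{\#}\nu_1, H_{\#}\nu_2) \le \mathrm{Lip}(H)\, \mathcal{W}_2(\nu_1,\nu_2)$ obtained by pushing forward any optimal coupling through $H$.

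\medskip

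\noindent
\emph{Step 3: solve the transformed equation and transfer back.} Because $G$ is Lipschitz and $\xi \in L^0_p(\RR)$, the lifted initial condition $G(\xi)$ lies in $L^0_p(\RR)$. The transformed SDE for $Z$ now has globally Lipschitz coefficients and a Lipschitz measure dependence in $\mathcal{W}_2$, so the classical McKean--Vlasov theory (e.g.\ \cite{AS,RC}) via a Picard--Banach fixed-point argument on $\mathcal{S}^p([0,T])$ with the $\mathcal{W}_2$-Lipschitz bound gives existence and uniqueness of a strong solution $Z \in \mathcal{S}^p([0,T])$, together with the standard $p$-th moment bound. Defining $X_t := H(Z_t)$ and using that $H$ is Lipschitz gives $X \in \mathcal{S}^p([0,T])$. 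Applying It\^o's formula to $H(Z_t)$ recovers \eqref{eq:Model1}, so $X$ is a strong solution of the original equation. For uniqueness, any two strong solutions $X^{(1)}, X^{(2)}$ of \eqref{eq:Model1} yield $Z^{(i)} = G(X^{(i)})$ solving the transformed SDE with the same initial datum, so $Z^{(1)} = Z^{(2)}$ by Step~2 uniqueness, and injectivity of $G$ forces $X^{(1)} = X^{(2)}$.

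\medskip

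\noindent
\emph{Main obstacle.} The only non-routine point is Step~2: making sure that after transformation the measure dependence really remains Lipschitz in $\mathcal{W}_2$ and that the drift singularity is fully absorbed by the choice of $\alpha$. The first issue is handled cleanly by the push-forward inequality above, and the second is essentially already in \cite{sz2016a, sz2016b} once one notes that the discontinuity in $b$ is entirely carried by the measure-free term $b_1$ under Assumption (H.\ref{Assum:A})\eqref{Assum:A2}.
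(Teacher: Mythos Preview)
Your proof is correct and follows essentially the same route as the paper: exploit that in the decomposable case $\alpha$ depends only on $b_1$ and $\sigma$, apply the measure-independent transformation $G$ from Section~\ref{Sec:transformProp}, obtain a McKean--Vlasov SDE with globally Lipschitz coefficients, and invert via $G^{-1}$. You have simply made explicit several points the paper leaves to the reader, notably the push-forward identity $\mathcal{L}_{X_t} = H_{\#}\mathcal{L}_{Z_t}$ and the resulting $\mathcal{W}_2$-Lipschitz bound for the transformed measure dependence; this is a clean way to formalise what the paper's short proof asserts by reference to \cite{sz2016a}.
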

\begin{proof}
Transforming the McKean--Vlasov SDE \eqref{eq:Model1}, employing the transformation $G: \mathbb{R} \to \mathbb{R}$ defined in Section \ref{Sec:transformProp}, with 
\begin{equation*}
\alpha =\frac{b(0^{-}, \mu)- b(0^{+},\mu)}{2\sigma^2(0)} = \frac{b_1(0^{-}) - b_1(0^{+})}{2 \sigma^2(0)},  
\end{equation*}
in order to eliminate the discontinuity in zero, yields a McKean--Vlasov SDE with globally Lipschitz continuous coefficients. This can be shown in a similar manner to \cite[Theorem 2.5]{sz2016a}). Moreover, $G$ has a global inverse due to the choice $c < 1/|\alpha|$ (see \cite[Lemma 2.2]{sz2016b}), and It\^{o}'s formula can be applied to $G^{-1}$, which allows to deduce the claim.

\end{proof} 

The interacting particles 
$(X^{i,N}_t)_{t \in [0,T]}$, $i \in \lbrace 1, \ldots, N \rbrace$, associated with (\ref{eq:Model1}) satisfy
\begin{equation}\label{eq:IPSystem}
\mathrm{d}X_t^{i,N} = b_1(X_t^{i,N}) \, \mathrm{d}t + b_2(X_t^{i,N}, \mu_t^{\boldsymbol{X}^{N}}) \, \mathrm{d}t + \sigma(X_t^{i,N}) \, \mathrm{d}W_t^{i},
\end{equation} 
where $(\xi^{i},W^{i})$, for $i \in \lbrace 1, \ldots, N \rbrace$, are independent copies of $(\xi,W)$. 
\begin{proposition}
Let Assumption (H.\ref{Assum:A}) be satisfied, let $\xi \in L_p^{0}(\mathbb{R})$ for a given $p \geq 2$ and assume $c < 1/|\alpha|$. Then, the interacting particle system defined in (\ref{eq:IPSystem}) 
has a unique strong solution in $\mathcal{S}^{p}([0,T])$.
\end{proposition}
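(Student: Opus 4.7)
The plan is to mimic the argument of Proposition \ref{Prop1Ex} but carried out componentwise in $\RR^N$, exploiting the fact that in the decomposable case the transformation $G$ involves only $b_1$ and $\sigma(0)$, and hence is independent of $i$ and of the empirical measure. The key observation is that although the $N$-dimensional drift of $(\boldsymbol{X}^N_t)_{t\in[0,T]}$ has discontinuities on the union of the coordinate hyperplanes $\{x_i=0\}$ (so that Theorem of \cite{sz2016b} is not directly applicable), this singular set is removed simultaneously for every particle by the same one-dimensional map $G$.

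First, I would set $Z^{i,N}_t := G(X^{i,N}_t)$ for each $i\in\{1,\dots,N\}$, with $\alpha=(b_1(0^-)-b_1(0^+))/(2\sigma^2(0))$ as in Proposition \ref{Prop1Ex}. Applying It\^o's formula to $G$ (justified by the properties of $G$ recalled in Section \ref{Sec:transformProp}) gives
\begin{equation*}
\mathrm{d}Z^{i,N}_t = \widetilde{b}_1(Z^{i,N}_t)\,\mathrm{d}t + G'(G^{-1}(Z^{i,N}_t))\,b_2(G^{-1}(Z^{i,N}_t),\mu^{\boldsymbol{X}^N}_t)\,\mathrm{d}t + \widetilde{\sigma}(Z^{i,N}_t)\,\mathrm{d}W^i_t,
\end{equation*}
where $\widetilde{b}_1(z):=G'(G^{-1}(z))\,b_1(G^{-1}(z))+\tfrac12 G''(G^{-1}(z))\,\sigma^2(G^{-1}(z))$ and $\widetilde{\sigma}(z):=G'(G^{-1}(z))\,\sigma(G^{-1}(z))$. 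By the very choice of $\alpha$, the jumps of $b_1$ and of $G''\sigma^2$ at $0$ cancel exactly, so that $\widetilde{b}_1$ is globally Lipschitz on $\RR$; $\widetilde\sigma$ is globally Lipschitz for the same reasons as in \cite[Thm.~2.5]{sz2016a}.

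Next, I would rewrite the measure-dependent part as a function of the empirical measure of the transformed particles. Since $\mu^{\boldsymbol{X}^N}_t = (G^{-1})_\#\mu^{\boldsymbol{Z}^N}_t$, setting $\widetilde{b}_2(z,\nu):=G'(G^{-1}(z))\,b_2\bigl(G^{-1}(z),(G^{-1})_\#\nu\bigr)$, the Lipschitz properties of $G$, $G'$ and $G^{-1}$ combined with the Lipschitz continuity of $b_2$ in $(x,\mu)$ with respect to $\mathcal{W}_2$ (Assumption H.\ref{Assum:A}(\ref{Assum:A2})) and the contraction
\begin{equation*}
\mathcal{W}_2\bigl((G^{-1})_\#\nu,(G^{-1})_\#\nu'\bigr)\le \mathrm{Lip}(G^{-1})\,\mathcal{W}_2(\nu,\nu')
\end{equation*}
yield that $\widetilde{b}_2$ is jointly Lipschitz in $(z,\nu)\in\RR\times\mathcal{P}_2(\RR)$. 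Thus the transformed particle system
\begin{equation*}
\mathrm{d}Z^{i,N}_t = \bigl(\widetilde{b}_1(Z^{i,N}_t)+\widetilde{b}_2(Z^{i,N}_t,\mu^{\boldsymbol{Z}^N}_t)\bigr)\mathrm{d}t + \widetilde{\sigma}(Z^{i,N}_t)\,\mathrm{d}W^i_t,\qquad Z^{i,N}_0=G(\xi^i),
\end{equation*}
is an $\RR^N$-valued SDE whose coefficients are globally Lipschitz in the state variables (using that $\mu\mapsto\frac{1}{N}\sum_j\delta_{z_j}$ is Lipschitz from $(\RR^N,|\cdot|)$ to $(\mathcal{P}_2(\RR),\mathcal{W}_2)$), and with initial data in $L_p^0(\RR)$ since $G$ is Lipschitz. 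Classical results for Lipschitz SDEs then give the existence of a unique strong solution $(\boldsymbol{Z}^N_t)_{t\in[0,T]}\in\mathcal{S}^p([0,T])$ with bounded $p$-th moments.

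Finally, I would undo the transformation: define $X^{i,N}_t:=G^{-1}(Z^{i,N}_t)$. Applying It\^o's formula to $G^{-1}$ (again permitted by the regularity of $G$ and $G^{-1}$ from Section \ref{Sec:transformProp}) shows that $(\boldsymbol{X}^N_t)_{t\in[0,T]}$ solves \eqref{eq:IPSystem}, and the Lipschitz property of $G^{-1}$ transfers the moment bound, giving $\boldsymbol{X}^N\in\mathcal{S}^p([0,T])$. Uniqueness follows because any strong solution of \eqref{eq:IPSystem} transforms, again by It\^o applied to $G$, into a strong solution of the Lipschitz system above, which is unique. The main technical point worth checking carefully is the cancellation that turns $\widetilde{b}_1$ into a Lipschitz function at $0$; everything else is a transparent transcription of the standard Lipschitz well-posedness theory into the present particle setting.
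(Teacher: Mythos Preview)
Your proposal is correct and follows essentially the same route as the paper: both apply the one-dimensional map $G$ componentwise (the paper packages this as $\boldsymbol{G}_N(\boldsymbol{x}^N)=(G(x_1),\dots,G(x_N))^\top$), observe that this simultaneously removes all the discontinuities on the coordinate hyperplanes, obtain a transformed $N$-dimensional SDE with globally Lipschitz coefficients, and then invert via It\^o's formula for $G^{-1}$. Your write-up is in fact more explicit than the paper's, in particular the pushforward identity $\mu^{\boldsymbol{X}^N}_t=(G^{-1})_\#\mu^{\boldsymbol{Z}^N}_t$ and the resulting $\mathcal{W}_2$-contraction, which makes the Lipschitz property of $\widetilde b_2$ transparent; the paper leaves this implicit.
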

\begin{proof}
In contrast to the set-up in \cite{sz2016b}, the set of
discontinuities, denoted by $\Theta$, is not a differentiable manifold, but has the form
\begin{equation*}
\Theta=\{(x_1,\dots,x_N)^{\top} \in \RR^N\colon \exists j\in \{1,\dots,N\}\colon x_j=0\}.
\end{equation*}
However, we may define $\boldsymbol{G}_N: \RR^{N} \to \RR^{N}$ by 
\begin{equation*}
\boldsymbol{G}_N(\boldsymbol{x}^N) := (G(x_1), \ldots, G(x_N))^{\top},
\end{equation*}
where $G$ is as in Proposition \ref{Prop1Ex} and $\boldsymbol{x}^N=(x_1, \ldots, x_N)^{\top}$, which allows us to transform the particle system $(\boldsymbol{X}_t^{N})_{t \in [0,T]} = (X^{1,N}_t, \ldots, X^{N,N}_t)^{\top}_{t \in [0,T]}$ into a new particle system with globally Lipschitz continuous coefficients. Now, $\boldsymbol{G}_N$ has a global inverse, as the mapping $G$ has a global inverse, due to the choice of $c$ (see Section \ref{Sec:transformProp}). Therefore, applying It\^{o}'s formula to the inverse allows to deduce the claim.

\end{proof}  

\subsection{McKean--Vlasov SDE with non-decomposable drift}
Here, we consider again a one-dimensional McKean--Vlasov SDE of the form \eqref{eq:Model1},
\begin{equation}\label{eq:Model2}
\mathrm{d} X_t = b(X_t, \mathcal{L}_{X_t}) \, \mathrm{d}t  + \sigma(X_t) \, \mathrm{d}W_t, \quad \ X_0= \xi \in L_p^{0}(\mathbb{R}).
\end{equation}
However, in contrast to the above setting, we will not assume that $b$ can be decomposed in two parts as in Assumption (H.\ref{Assum:A}(\ref{Assum:A2})) from the previous section, and therefore the transformation will also depend on the measure. To be precise, for any $(x,\mu) \in \RR \times \mathcal{P}_2(\RR)$, we define
\begin{align}\label{eq:TransMeas}
G(x,\mu) := x + \alpha(\mu) x |x| \phi\left( \frac{x}{c} \right),
\end{align} 
where 
\begin{align}\label{eq:TransMeas2}
&\phi(x):=
\begin{cases}
(1-x^2)^3, & |x|\le 1, \\
0, & |x|>1\, 
\end{cases} 
\qquad \alpha(\mu): =\frac{b(0^{-},\mu)- b(0^{+},\mu)}{2\sigma^2(0)}, 
\end{align}
and $c>0$ is a constant small enough to guarantee the invertibility of $G$. When we speak of an `inverse' of $G(x,\mu)$, we mean `inverse with respect to $x$', i.e., the inverse is a function $G^{-1}\colon \RR\times \mathcal{P}_2(\RR) \to \RR$ which satisfies $G^{-1} \big(G(x,\mu),  \mu \big)=x$ for all $(x,\mu) \in \RR\times \mathcal{P}_2(\RR)$ and $G\big(G^{-1}(z,\mu), \mu \big)=z$ for all $(z,\mu)\in \RR\times \mathcal{P}_2(\RR)$. For a given flow of measures $(\mu_t)_{t \in [0,T]} \in \mathcal{C}([0,T],\mathcal{P}_2(\RR))$, $G$ may also be viewed as a mapping $G \colon \RR \times [0,T] \to \RR$.

In the following, we state the model assumptions which will specify the set-up for this subsection: 
\begin{Assumption}\label{Assum:AA}
Assumption (H.\ref{Assum:A}(\ref{Assum:A1})) is satisfied and we require:
\begin{enumerate}[(1)]
    \item \label{Assum:AA2} 
    There exist constants $L, L_1>0$ such that  
      \begin{align*}
       & \sup_{x \neq 0} \frac{|b(x,\mu)|}{1+|x|} \leq L,  \quad | b(x,\mu)  - b(x,\nu) | \leq L_1 \mathcal{W}_2(\mu,\nu) \quad \forall x \neq 0 \in \mathbb{R}, \ \forall \mu, \nu \in  \mathcal{P}_2(\mathbb{R}). 
    \end{align*}
     Additionally, for all $\mu \in \mathcal{P}_2(\RR)$, $\RR \ni x \mapsto b(x, \mu)$
     is piecewise Lipschitz continuous on the subintervals $(-\infty,0)$ and $(0,\infty)$, uniformly with respect to $\mu$.
     \item \label{Assum:AA3}   
     $\alpha \in \mathcal{C}^{(1,1)}_{b}$, and the mapping $\mathcal{P}_2(\RR) \times \RR \ni (\mu, y) \mapsto \partial_y \partial_{\mu} \alpha(\mu)(y)$ is Lipschitz continuous, that is, there exists a constant $L_2>0$ such that 
     \begin{align*}
      & | \partial_y \partial_{\mu} \alpha(\mu)(y) -  \partial_y \partial_{\mu} \alpha(\nu)(y')| \leq L_2 \left(|y-y'| + \mathcal{W}_2(\mu,\nu) \right) \quad \forall y,y' \in \mathbb{R}, \ \forall \mu, \nu \in  \mathcal{P}_2(\mathbb{R}).
     \end{align*}      
      \item \label{Assum:AA4}
      For any $\mu \in \mathcal{P}_2(\RR)$, the mapping $\RR \ni y \mapsto \partial_{\mu} \alpha(\mu)(y)$
      vanishes in zero, i.e., 
      \begin{align}\label{eq:zeroinzero}
       & \partial_{\mu} \alpha(\mu)(0) = \partial_{\mu} b(0^{-}, \mu)(0)- \partial_{\mu}b(0^{+},\mu)(0) = 0. 
      \end{align}
      \item \label{Assum:AA5}
      The mapping $\mathcal{P}_2(\RR) \times \RR \ni (\mu,y) \mapsto \partial_{\mu}\alpha(\mu)(y) b(y,\mu)$ is Lipschitz continuous.     
\end{enumerate} 
\end{Assumption}
\noindent
\begin{remark}
The requirement in (H.\ref{Assum:AA}(\ref{Assum:AA3})) that $\alpha \in \mathcal{C}^{(1,1)}_{b}$ is needed to apply an It\^{o} formula for $\alpha$ (see \cite[Proposition 5.102]{CD}).
\end{remark}
\begin{remark}
Note that (H.\ref{Assum:AA}(\ref{Assum:AA5})) could also be replaced by the following alternative set of assumptions:
On each of the two subintervals $(-\infty,0)$ and $(0,\infty)$, $b(\cdot,\mu)$ is a $\mathcal{C}^{1}(\RR,\RR)$ function with bounded derivative and, additionally, for any $\mu \in \mathcal{P}_2(\RR)$, the mapping $\RR \ni y \mapsto \partial_y\partial_{\mu} \alpha(\mu)(y)$ vanishes in zero, i.e.,
\begin{equation*}
\partial_y \partial_{\mu} \alpha(\mu)(0) = \partial_y\partial_{\mu} b(0^{-}, \mu)(0)- \partial_y\partial_{\mu}b(0^{+},\mu)(0) = 0.
\end{equation*}
\end{remark}
\noindent
The following proposition shows the Lipschitz continuity of the mapping $\RR \times \mathcal{P}_2(\RR) \ni (x,\mu) \to G^{-1}(x,\mu)$.
\begin{proposition}\label{Prop}
Let the function $G$ be defined as in \eqref{eq:TransMeas} with $c < 1/ \sup_{\mu \in \mathcal{P}_2(\RR)}|\alpha(\mu)|$ and
\eqref{eq:TransMeas2} and let Assumption (H.\ref{Assum:AA}(\ref{Assum:AA2})) be
satisfied. Then, there exists a constant $L(c)$ satisfying $L(c) \to 0$ as $c \to 0$, such that for any $x,y \in \RR$ and $\mu, \nu \in \mathcal{P}_2(\RR)$ 
\begin{equation*}
|G^{-1}(x,\mu) - G^{-1}(y,\nu)| \leq 2|x-y| + L(c)\mathcal{W}_2(\mu,\nu).
\end{equation*}
\end{proposition}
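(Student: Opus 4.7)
The plan is to decompose the estimate via the triangle inequality into a pure $x$-dependence part and a pure $\mu$-dependence part, exploiting the fact that $G(x,\mu)-x$ is supported on $\{|x|\leq c\}$ and is quadratic there, so that variations in $\mu$ produce an $O(c^2)$ effect.

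First I would establish the one-variable bi-Lipschitz property: for each fixed $\mu$, the map $x \mapsto G(x,\mu)$ has derivative
\[
\partial_x G(x,\mu) = 1 + \alpha(\mu)\,\partial_x\!\big(x|x|\phi(x/c)\big),
\]
and a direct computation shows that $|\partial_x(x|x|\phi(x/c))| \leq Kc$ with $K$ depending only on $\|\phi\|_\infty$ and $\|\phi'\|_\infty$. Since $c<1/\sup_\mu|\alpha(\mu)|$, for $c$ small enough we have $\partial_x G(x,\mu)\in[1/2,3/2]$, so that $G(\cdot,\mu)$ is invertible with $G^{-1}(\cdot,\mu)$ being Lipschitz with constant $2$; this is essentially the content of Section \ref{Sec:transformProp} applied at fixed $\mu$.

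Second, I would control the dependence on $\mu$: writing
\[
G(x,\mu)-G(x,\nu) = \big(\alpha(\mu)-\alpha(\nu)\big)\,x|x|\phi(x/c),
\]
and using that $\phi$ is supported in $[-1,1]$ so that the factor $x|x|\phi(x/c)$ vanishes for $|x|>c$ and is bounded by $c^2$ for $|x|\leq c$, one obtains
\[
\sup_{x\in\RR}|G(x,\mu)-G(x,\nu)| \leq c^2\,|\alpha(\mu)-\alpha(\nu)|.
\]
The Lipschitz continuity of $\alpha$ in $\mu$ with respect to $\mathcal{W}_2$ follows either from Assumption (H.\ref{Assum:AA}(\ref{Assum:AA3})) via the boundedness of $\partial_\mu\alpha$ and the standard Wasserstein mean-value estimate for $\mathcal{C}^{(1,1)}_b$ functionals, or directly by passing to the one-sided limits $x\to 0^\pm$ in the bound $|b(x,\mu)-b(x,\nu)|\leq L_1\mathcal{W}_2(\mu,\nu)$ from (H.\ref{Assum:AA}(\ref{Assum:AA2})), yielding $|\alpha(\mu)-\alpha(\nu)|\leq C\mathcal{W}_2(\mu,\nu)$.

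Finally, I would assemble the estimate by writing
\[
|G^{-1}(x,\mu)-G^{-1}(y,\nu)| \leq |G^{-1}(x,\mu)-G^{-1}(y,\mu)| + |G^{-1}(y,\mu)-G^{-1}(y,\nu)|.
\]
The first term is at most $2|x-y|$ by the one-variable bi-Lipschitz property. For the second term, set $u=G^{-1}(y,\mu)$ and $v=G^{-1}(y,\nu)$, so that $G(u,\mu)=y=G(v,\nu)$, whence $G(u,\mu)-G(v,\mu)=G(v,\nu)-G(v,\mu)$; applying the sup-estimate above on the right-hand side and then the Lipschitz property of $G^{-1}(\cdot,\mu)$ on the left-hand side gives $|u-v|\leq 2Cc^2\,\mathcal{W}_2(\mu,\nu)$. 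Taking $L(c):=2Cc^2$ finishes the proof, and clearly $L(c)\to 0$ as $c\to 0$. The only mildly subtle step is confirming the Lipschitz continuity of $\alpha$ in $\mu$ from the hypotheses on $b$, but this should be routine from the one-sided limit argument combined with (H.\ref{Assum:AA}(\ref{Assum:AA2})).
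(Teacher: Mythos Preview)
Your argument is correct and in fact cleaner than the paper's. Both proofs begin the same way, establishing that $\partial_x G\in[1/2,3/2]$ so that $G^{-1}(\cdot,\mu)$ is $2$-Lipschitz, and both conclude via the triangle inequality. The difference lies in how the $\mu$-variation of $G^{-1}$ is controlled. The paper writes $G^{-1}(x,\mu)=\int_0^x \big(\partial_x G(G^{-1}(y,\mu),\mu)\big)^{-1}\,\mathrm{d}y$, estimates the integrand difference using the Lipschitz constants $L_x,L_\mu$ of $\partial_x G$, and closes with Gronwall's inequality on $[0,c]$, arriving at $L(c)=4L_\mu c\, e^{4L_xc}$. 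Your route is more direct: from $G(u,\mu)=G(v,\nu)$ you extract the algebraic identity $G(u,\mu)-G(v,\mu)=G(v,\nu)-G(v,\mu)$, bound the right-hand side pointwise by $c^2|\alpha(\mu)-\alpha(\nu)|$ using the compact support and quadratic smallness of $x|x|\phi(x/c)$, and then invoke the lower bound $\partial_x G\ge 1/2$ on the left-hand side. This avoids the integral representation and Gronwall entirely, and makes the $O(c^2)$ behaviour of $L(c)$ explicit (the paper's constant is also $O(c^2)$ once one notices $L_\mu=O(c)$, but this is not spelled out there). One minor point: your phrase ``for $c$ small enough'' is slightly looser than needed, since the hypothesis $c<1/\sup_\mu|\alpha(\mu)|$ already suffices for $\partial_x G>1/2$ via the sharper estimate $|\partial_x G-1|<c\sup_\mu|\alpha(\mu)|/2$ that the paper computes; but this is a cosmetic issue, and your reference to Section~\ref{Sec:transformProp} covers it.
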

\begin{proof}
First, we note that by differentiating \eqref{eq:TransMeas}, we get for $x \in [-c,c]$
\begin{align*}
\partial_x G(x,\mu) 
&= 1 - \frac{6\alpha(\mu)}{c^2} |x| x^2 \left(1-(x/c)^2 \right)^2 + 2\alpha(\mu)|x|(1-(x/c)^2)^3\\
&= 1 - 2c \alpha(\mu) \frac{|x|}{c}\big(4x^2/c^2 - 1\big)\left(1-(x/c)^2 \right)^2\,.
\end{align*}
It is easy to verify that 
\[
\sup_{x\in [0,c]}\Big|\frac{|x|}{c}\big(4x^2/c^2 - 1\big)\left(1-(x/c)^2 \right)^2\Big|
=\sup_{z\in [0,1]}\Big||z|\big(4z^2    - 1\big)\left(1-z^2 \right)^2\Big|<\frac{1}{4}\,,
\]
which implies that for all $x\in \RR$ and $\mu\in \mathcal{P}_2(\RR)$
\[
\big|\partial_x G(x,\mu)-1\big|
<c \frac{|\sup_{\mu \in \mathcal{P}_2(\RR)}\alpha(\mu)|}{2}\,.
\]
In particular, if $c < 1/\sup_{\mu \in \mathcal{P}_2(\RR)} |\alpha(\mu)|$, then 
for all $x,y\in \RR$ and $\mu\in \mathcal{P}_2(\RR)$, we have
\begin{align*}
\partial_x G(x,\mu)>\frac{1}{2}, \quad
\big|G^{-1}(x,\mu)-G^{-1}(y,\mu)\big|<2|x-y|\,.
\end{align*}
That
$ \mu \mapsto \partial_xG(x,\mu)$ is Lipschitz continuous
is a consequence of (H.\ref{Assum:AA}(\ref{Assum:AA2})).  
It is easy to show that the mapping $ x \mapsto \partial_xG(x,\mu)$ is also Lipschitz continuous. Denote the Lipschitz
constant of $\partial_xG$ with respect to the first and second argument by 
$L_x$ and $L_{\mu}$, respectively.
Writing 
\begin{equation*}
G^{-1}(x,\mu) = \int_{0}^{x} \frac{1}{\partial_xG(G^{-1}(y,\mu),\mu)} \, \mathrm{d}y, 
\end{equation*}
we obtain
\begin{align*}
& |G^{-1}(x,\mu) - G^{-1}(x,\nu)|  \\
& \leq  \int_{0}^{x} \left| \frac{1}{\partial_xG(G^{-1}(y,\mu),\mu)} - \frac{1}{\partial_xG(G^{-1}(y,\nu),\mu)} \right| \, \mathrm{d}y  + \int_{0}^{x} \left| \frac{1}{\partial_xG(G^{-1}(y,\nu),\mu)} - \frac{1}{\partial_xG(G^{-1}(y,\nu),\nu)} \right| \, \mathrm{d}y \\
& \leq  \int_{0}^{x} \left| \frac{\partial_xG(G^{-1}(y,\nu),\mu)-\partial_xG(G^{-1}(y,\mu),\mu)}{\partial_xG(G^{-1}(y,\mu),\mu)\partial_xG(G^{-1}(y,\nu),\mu)} \right| \, \mathrm{d}y  
+ \int_{0}^{x} \left| \frac{\partial_xG(G^{-1}(y,\nu),\nu)-\partial_xG(G^{-1}(y,\nu),\mu)}{\partial_xG(G^{-1}(y,\nu),\mu)\partial_xG(G^{-1}(y,\nu),\nu)} \right| \, \mathrm{d}y \\
& \leq  \int_{0}^{x} 4L_x\left| G^{-1}(y,\nu)-G^{-1}(y,\mu) \right| \, \mathrm{d}y  
+ \int_{0}^{x} 4L_{\mu} \mathcal{W}_2(\mu,\nu) \, \mathrm{d}y \\
& \leq  4L_x\int_{0}^{x} \left| G^{-1}(y,\nu)-G^{-1}(y,\mu) \right| \, \mathrm{d}y  
+  4L_{\mu}x  \mathcal{W}_2(\mu,\nu)  \,.
\end{align*}

For $|x|< c$, we have 
\begin{align*}
|G^{-1}(x,\mu) - G^{-1}(x,\nu)|  
& \leq  4L_x\int_{0}^{x} \left| G^{-1}(y,\nu)-G^{-1}(y,\mu) \right| \, \mathrm{d}y  +  4L_{\mu}x  \mathcal{W}_2(\mu,\nu)  \\
& \leq  4L_x\int_{0}^{x} \left| G^{-1}(y,\nu)-G^{-1}(y,\mu) \right| \, \mathrm{d}y  +  4L_{\mu}c  \mathcal{W}_2(\mu,\nu)  \,,
\end{align*}
and hence Gronwall's inequality implies
\[
|G^{-1}(x,\mu) - G^{-1}(x,\nu)|
\le 4L_{\mu}c  \mathcal{W}_2(\mu,\nu)e^{4L_xx}
\le 4L_{\mu}ce^{4L_xc}  \mathcal{W}_2(\mu,\nu)\,.
\]
For $|x| \geq c$, $|G^{-1}(x,\mu) - G^{-1}(x,\nu)|=0\le 4L_{\mu}ce^{4L_xc}  \mathcal{W}_2(\mu,\nu)$ by the definition of $G$. 
We finally obtain, for all $x,y\in\RR$ and
all $\mu,\nu\in \mathcal{P}_2(\RR)$, with $L(c):=4L_{\mu}ce^{4L_xc}$,
\begin{align*}
|G^{-1}(x,\mu) - G^{-1}(y,\nu)|
&\le|G^{-1}(x,\mu) - G^{-1}(x,\nu)|+|G^{-1}(x,\nu) - G^{-1}(y,\nu)|\\
&\le L(c)  \mathcal{W}_2(\mu,\nu) + 2|x-y|
\le \max(L(c),2)  (\mathcal{W}_2(\mu,\nu) + |x-y|)\,.
\end{align*}
\end{proof}
\begin{remark}\label{REM:LIP}
In what follows, we will assume that $c < 1/\sup_{\mu \in \mathcal{P}_2(\RR)} |\alpha(\mu)|$ and is small enough such that the Lipschitz constant of the mapping $\mathcal{P}_2(\RR) \ni \mu \mapsto G^{-1}(x,\mu)$ (i.e., the constant $L(c)$ a few lines above) is less than a half. The reason for this requirement will become clearer in the proof of Theorem \ref{TH:MAIN}.
\end{remark}
Similar to the previous section, we aim to recover a unique strong solution of \eqref{eq:Model2} by setting $X_t = G^{-1}(Z_t^{\mu},\mu_t)$, where $\mu_t = \mathcal{L}_{X_t}$ for $t \in [0,T]$, and $(Z_t^{\mu})_{t \in [0,T]}$ is the process obtained by applying the transformation $G$ to $X$. 
Even though $G$ is not twice continuously differentiable in the state variable, It\^{o}'s formula is still applicable due to the special form of the discontinuity
(see \cite[Theorem 2.1]{EL} and the comments after the proof of this theorem). 
Now, observe that 
\begin{align*}
\mathrm{d}G(X_t,\mu_t) & = \left( \partial_t G(X_t,\mu_t) +  b(X_t,\mu_t) + \alpha({\mu_t}) \bar{\phi}'(X_t) b(X_t,\mu_t) + \frac{1}{2} \alpha({\mu_t})  \bar{\phi}''(X_t) \sigma^2(X_t) \right) \, \mathrm{d}t  \\
& \qquad + (\sigma(X_t) + \alpha({\mu_t}) \bar{\phi}'(X_t)\sigma(X_t)) \, \mathrm{d}W_t.
\end{align*}
It\^{o}'s formula along a flow of measures $(\mu_t)_{t \in [0,T]} \in \mathcal{C}([0,T],\mathcal{P}_2(\RR))$ (see, e.g., \cite[Proposition 5.102]{CD}) implies
\begin{align*}
\partial_t G(x,\mu_t) &= \int_{\RR} \left( b(y,\mu_t) \partial_{\mu}G(x,\mu_t)(y) + \frac{\sigma^{2}(y)}{2} \partial_y \partial_{\mu}G(x,\mu_t)(y) \right) \, \mu_t(\mathrm{d}y)=:\mathcal{L}_{\mu_t}(G(x,\cdot))(\mu_t),
\end{align*} 
where we recall that $\partial_y \partial_{\mu}G(x,\mu_t)(y)$ denotes the derivative of the mapping $\RR \ni y \mapsto \partial_{\mu}G(x,\mu_t)(y)$ and
\begin{align*}
& \partial_{\mu}G(x,\mu_t)(y) = \partial_{\mu}\alpha(\mu_t)(y)|x|x\phi(x/c), \quad \partial_y \partial_{\mu}G(x,\mu_t)(y) = \partial_y\partial_{\mu}\alpha(\mu_t)(y)|x|x\phi(x/c).
\end{align*}
Hence, we define
\begin{align}\label{eq:Transf}
& \mathrm{d}Z_t^{\mu} :=\tilde{b}(Z^{\mu}_t,\mu_t) \, \mathrm{d}t + \tilde{\sigma}(Z^{\mu}_t) \, \mathrm{d}W_t, \quad  Z_0^{\mu} = G(\xi,\delta_{\xi}),
\end{align} 
where 
\begin{align}\label{eq:TransfCoeff}
\tilde{b}(z,\mu) &:= \mathcal{L}_{\mu}(G(G^{-1}(z,\mu),\cdot)(\mu)  +  b(G^{-1}(z,\mu),\mu) + \alpha({\mu}) \bar{\phi}'(G^{-1}(z,\mu)) b(G^{-1}(z,\mu),\mu) \notag \\
& \quad  + \frac{1}{2} \alpha(\mu)  \bar{\phi}''(G^{-1}(z,\mu)) \sigma^2(G^{-1}(z,\mu)),  \notag \\
\tilde{\sigma}(z,\mu) &:= \sigma(G^{-1}(z,\mu)) + \alpha({\mu}) \bar{\phi}'(G^{-1}(z,\mu))\sigma(G^{-1}(z,\mu)).
\end{align}
In the following, we will show that the decoupled SDE \eqref{eq:Transf} where the flow $(\mu_t)_{t \in [0,T]} \in \mathcal{C}([0,T],\mathcal{P}_2(\RR))$ is fixed has Lipschitz continuous coefficients. Note that for a such a flow of measures the process $X^{\mu}$ in \eqref{eq:Model2} (interpreted as classical SDE) has bounded moments uniformly in $(\mu_t)_{t \in [0,T]}$, which is a consequence of (H.\ref{Assum:A}(\ref{Assum:A1})) and (H.\ref{Assum:AA}(\ref{Assum:AA2})). In particular, for $\xi \in L_2^{0}(\mathbb{R})$, we have the a-priori estimate $\mathbb{E}\left[\sup_{0 \leq t \leq T} |X^{\mu}_t|^{2} \right] \leq C(1+ \mathbb{E}[|\xi|^{2}])=:\bar{C}$,
where $C>0$ only depends on $T,p$ and the constants appearing in the model assumptions. We will introduce the following subspace of $\mathcal{C}([0,T],\mathcal{P}_2(\RR))$: We define $\mathcal{P}^{b} := \lbrace \mu \in \mathcal{C}([0,T],\mathcal{P}_2(\RR)): \ \sup_{t \in [0,T]} \int_{\RR} x^2 \, \mu_t(\mathrm{d}x) \leq \bar{C} \rbrace$, where $\bar{C}$ is defined as above and complete this space with the metric $\sup_{t \in [0,T]} \mathcal{W}_2(\mu_t,\nu_t)$, for $(\mu_t)_{t \in [0,T]}, (\nu_t)_{t \in [0,T]} \in \mathcal{P}^{b}$.
\begin{lemma}\label{lem:Lip}
Let Assumption (H.\ref{Assum:AA}) be satisfied and assume $c < 1/ \sup_{\mu \in \mathcal{P}_2(\RR)}|\alpha(\mu)|$. Then, $\tilde{b}$ and $\tilde{\sigma}$ given in \eqref{eq:TransfCoeff} are Lipschitz continuous on $\RR \times \mathcal{P}^{b}$.
\end{lemma}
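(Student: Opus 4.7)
The approach is to decompose $\tilde b$ into the four contributions visible in \eqref{eq:TransfCoeff} and verify Lipschitz continuity term by term on $\RR \times \mathcal P^b$, leveraging four structural facts: \textbf{(i)} $\bar\phi(x):=x|x|\phi(x/c)$ and its (a.e.) derivatives $\bar\phi',\bar\phi''$ are supported in $[-c,c]$, hence uniformly bounded, and so are the compositions with $G^{-1}(\cdot,\mu)$ (so the localizing cutoffs kill any linear growth coming from $\sigma$ or $b$ where those factors appear); \textbf{(ii)} $\alpha(\mu)$ was chosen so that the jumps of $b(\cdot,\mu)$ and of $\tfrac12\alpha(\mu)\bar\phi''(\cdot)\sigma^2(\cdot)$ at $0$ cancel; \textbf{(iii)} $\bar\phi'(0)=0$, which absorbs the jump of $b$ in the term $\alpha\bar\phi'b$; \textbf{(iv)} Assumption (H.\ref{Assum:AA}(\ref{Assum:AA4})), $\partial_\mu\alpha(\mu)(0)=0$, paired with (H.\ref{Assum:AA}(\ref{Assum:AA5})), makes the integrand $b(y,\mu)\partial_\mu\alpha(\mu)(y)$ Lipschitz in $(y,\mu)$ despite $b$ being discontinuous at $0$.

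\textbf{Preliminary Lipschitz building blocks.} First I would record that $G^{-1}$ is Lipschitz in $(z,\mu)$ by Proposition~\ref{Prop}; $\sigma$ and $b(\cdot,\mu)\vert_{(-\infty,0)}$, $b(\cdot,\mu)\vert_{(0,\infty)}$ are Lipschitz (uniformly in $\mu$); $\mu\mapsto \alpha(\mu)$ is bounded and Lipschitz in $\mathcal W_2$, because $\partial_\mu\alpha$ is bounded by (H.\ref{Assum:AA}(\ref{Assum:AA3})) and a standard interpolation along an optimal coupling yields $|\alpha(\mu)-\alpha(\nu)|\le\|\partial_\mu\alpha\|_\infty\mathcal W_2(\mu,\nu)$; finally $\bar\phi,\bar\phi'$ are Lipschitz on $\RR$ and $\bar\phi''$ is piecewise Lipschitz with bounded jump at $0$.

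\textbf{The pointwise terms.} I would group $b(G^{-1}(z,\mu),\mu)+\tfrac12\alpha(\mu)\bar\phi''(G^{-1}(z,\mu))\sigma^2(G^{-1}(z,\mu))=\psi(G^{-1}(z,\mu),\mu)$ with $\psi(x,\mu):=b(x,\mu)+\tfrac12\alpha(\mu)\bar\phi''(x)\sigma^2(x)$. Computing the one-sided limits at $x=0$ shows that the two jumps cancel by the very choice $\alpha(\mu)=(b(0^{-},\mu)-b(0^{+},\mu))/(2\sigma^2(0))$, so $\psi(\cdot,\mu)$ is continuous at $0$, piecewise Lipschitz on $(-\infty,0)$ and $(0,\infty)$, and thus Lipschitz on $\RR$ with a constant uniform in $\mu$; Lipschitz continuity of $\psi(x,\cdot)$ in $\mathcal W_2$ follows from (H.\ref{Assum:AA}(\ref{Assum:AA2})) for $b$ and from Lipschitz continuity of $\alpha$. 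Composing with the Lipschitz map $G^{-1}$ preserves joint Lipschitz continuity in $(z,\mu)$. The term $\alpha(\mu)\bar\phi'(G^{-1}(z,\mu))b(G^{-1}(z,\mu),\mu)$ is handled similarly: by (iii), $x\mapsto \bar\phi'(x)b(x,\mu)$ is continuous at $0$ (both one-sided limits equal $0$) and therefore globally Lipschitz in $x$, uniformly in $\mu$, while Lipschitz continuity in $\mu$ follows from boundedness of $\bar\phi'$ combined with the Lipschitz properties of $\alpha$ and of $b(x,\cdot)$. Finally, $\tilde\sigma(z,\mu)=\sigma(G^{-1}(z,\mu))(1+\alpha(\mu)\bar\phi'(G^{-1}(z,\mu)))$ is treated in exactly the same manner; the factor $\sigma\circ G^{-1}$ is itself Lipschitz in $(z,\mu)$, while $\alpha\,\bar\phi'\!\circ G^{-1}$ is bounded and Lipschitz, so $\tilde\sigma$ is Lipschitz (the unboundedness of $\sigma$ is harmless because the only place $\sigma$ gets multiplied by something unbounded is inside the integral of $H$, not in $\tilde\sigma$ itself).

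\textbf{The measure-integral term.} The hardest piece is $\bar\phi(G^{-1}(z,\mu))\,H(\mu)$ with
\[
H(\mu)=\int_{\RR}\!\Big(b(y,\mu)\partial_\mu\alpha(\mu)(y)+\tfrac12\sigma^2(y)\partial_y\partial_\mu\alpha(\mu)(y)\Big)\mu(\mathrm dy).
\]
Boundedness of $H$ on $\mathcal P^b$ uses linear growth of $b$, the second-moment bound, and boundedness of $\partial_\mu\alpha$ and $\partial_y\partial_\mu\alpha$; combined with boundedness and Lipschitz continuity of $\bar\phi(G^{-1}(\cdot))$ this reduces the term to proving that $\mu\mapsto H(\mu)$ is Lipschitz on $\mathcal P^b$. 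For the first summand, (H.\ref{Assum:AA}(\ref{Assum:AA5})) gives joint Lipschitz continuity of the integrand in $(y,\mu)$, so the standard Wasserstein-coupling estimate $\big|\int f(y,\mu)\mu(\mathrm dy)-\int f(y,\nu)\nu(\mathrm dy)\big|\le L_f\mathbb E[|Y-Y'|]+L_f\mathcal W_2(\mu,\nu)\le C\mathcal W_2(\mu,\nu)$ applies. For the second summand I would telescope
\[
\sigma^2(Y)\partial_y\partial_\mu\alpha(\mu)(Y)-\sigma^2(Y')\partial_y\partial_\mu\alpha(\nu)(Y')=\big(\sigma^2(Y)-\sigma^2(Y')\big)\partial_y\partial_\mu\alpha(\mu)(Y)+\sigma^2(Y')\big(\partial_y\partial_\mu\alpha(\mu)(Y)-\partial_y\partial_\mu\alpha(\nu)(Y')\big),
\]
along an optimal coupling $(Y,Y')$ of $(\mu,\nu)$, and use $|\sigma^2(Y)-\sigma^2(Y')|\le L(|\sigma(Y)|+|\sigma(Y')|)|Y-Y'|\le C(1+|Y|+|Y'|)|Y-Y'|$, boundedness of $\partial_y\partial_\mu\alpha$, its Lipschitz continuity from (H.\ref{Assum:AA}(\ref{Assum:AA3})), and Cauchy--Schwarz, with the resulting moments controlled by $\bar C$.

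\textbf{Main obstacle.} The delicate point is precisely this last telescope: the unbounded $\sigma^2(Y')$ multiplying the $y$-increment $|Y-Y'|$ obstructs a Lipschitz bound that uses only the second-moment bound encoded in $\mathcal P^b$; a clean estimate requires either pairing each unbounded factor with at most one $|Y-Y'|$ via Cauchy--Schwarz (so that only $\mathbb E|Y-Y'|^2=\mathcal W_2^2$ appears and one moment stays quadratic) or strengthening the a-priori estimate so that $\sup_{t}\mathbb E[|X_t^\mu|^4]$ is controlled uniformly on $\mathcal P^b$, which is available under $\xi\in L^p_0(\RR)$ for $p\ge 4$ thanks to (H.\ref{Assum:A}(\ref{Assum:A1})) and (H.\ref{Assum:AA}(\ref{Assum:AA2})). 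With that moment input in hand, each estimate above yields a Lipschitz constant depending only on $\bar C$, $c$ and the structural constants in (H.\ref{Assum:A})--(H.\ref{Assum:AA}), completing the proof.
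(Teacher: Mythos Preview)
Your proof is correct and follows the same decomposition and arguments as the paper: the pointwise terms are handled via the compact support of $\bar\phi,\bar\phi',\bar\phi''$ together with the jump cancellation encoded in $\alpha$ (the paper invokes \cite[Lemmas~2.4--2.5]{sz2016b} where you argue directly that $\bar\phi' b$ and $\bar\phi'\sigma$ are globally Lipschitz because $\bar\phi'(0)=\bar\phi'(\pm c)=0$), and the measure-integral term is treated via an optimal-coupling estimate using (H.\ref{Assum:AA}(\ref{Assum:AA5})) and the $\mathcal P^b$ second-moment bound. Your factorisation $\mathcal L_\mu(G(G^{-1}(z,\mu),\cdot))(\mu)=\bar\phi(G^{-1}(z,\mu))\,H(\mu)$ is a tidy reorganisation of the paper's splitting, and the moment issue you flag for the $\sigma^2(Y')\big(\partial_y\partial_\mu\alpha(\mu)(Y)-\partial_y\partial_\mu\alpha(\nu)(Y')\big)$ cross-term is genuine --- the paper dispatches the whole $\sigma^2\partial_y\partial_\mu\alpha$ integral with the sentence ``analogous statements can be derived'', whereas your Cauchy--Schwarz bookkeeping makes explicit that either a careful pairing or a fourth-moment input on $\mathcal P^b$ is what actually closes that estimate.
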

\begin{proof}
Using (H.\ref{Assum:A}(\ref{Assum:A1})), the Lipschitz continuity of $z \mapsto G^{-1}(z,\nu_t)$ and the uniform boundedness of $\alpha$, see (H.\ref{Assum:AA}(\ref{Assum:AA2})), in combination with \cite[Lemma 2.5]{sz2016b}, gives the Lipschitz continuity of $ z \mapsto \tilde{\sigma}(z,\nu_t)$, with a Lipschitz constant independent of $\nu_t$. Similarly, we can deduce that $\nu_t \mapsto \tilde{\sigma}(z,\nu_t)$ is Lipschitz continuous, due to Proposition \ref{Prop} and the Lipschitz continuity of $ \nu_t \mapsto \alpha(\nu_t)$.

The choice of $\alpha$ along with (H.\ref{Assum:AA}(\ref{Assum:AA2})) guarantees that the mapping 
\begin{equation*}
z \mapsto b(G^{-1}(z,\nu_t),\nu_t) + \frac{1}{2} \alpha(\nu_t)  \bar{\phi}''(G^{-1}(z,\nu_t)) \sigma^2(G^{-1}(z,\nu_t)),
\end{equation*}
is Lipschitz continuous. From \cite[Lemma 2.4]{sz2016b}, we can deduce the Lipschitz continuity of 
\begin{equation*}
z \mapsto \alpha(\nu_t) \bar{\phi}'(G^{-1}(z,\nu_t)) b(G^{-1}(z,\nu_t),\nu_t).
\end{equation*}
That 
\begin{align*}
& \nu_t \mapsto b(G^{-1}(z,\nu_t),\nu_t) + \frac{1}{2} \alpha(\nu_t)  \bar{\phi}''(G^{-1}(z,\nu_t)) \sigma^2(G^{-1}(z,\nu_t)), \quad \nu_t \mapsto  \alpha(\nu_t) \bar{\phi}'(G^{-1}(z,\nu_t)) b(G^{-1}(z,\nu_t),\nu_t),
\end{align*}
are Lipschitz continuous is a consequence of (H.\ref{Assum:A}(\ref{Assum:A1})) and (H.\ref{Assum:AA}(\ref{Assum:AA2})), Proposition \ref{Prop} and the fact that $G^{-1}(z,\mu^{(1)})$ and $G^{-1}(z,\mu^{(2)})$, for $\mu^{(1)}, \mu^{(2)} \in \mathcal{P}_2(\RR)$, have the same sign. Also note that $\bar{\phi}'$ and $\bar{\phi}''$ are (piecewise) Lipschitz continuous and bounded. It remains to analyse the Lipschitz continuity of 
\begin{align}\label{eq:MEAS}
(z,\nu_t) \mapsto \int_{\RR} \left( b(y,\nu_t) \partial_{\mu}G(G^{-1}(z,\nu_t),\nu_t)(y) + \frac{\sigma^{2}(y)}{2} \partial_y \partial_{\mu}G(G^{-1}(z,\nu_t),\nu_t)(y) \right) \, \nu_t(\mathrm{d}y). 
\end{align}
Assumptions (H.\ref{Assum:AA}(\ref{Assum:AA3})) and (H.\ref{Assum:AA}(\ref{Assum:AA4})) guarantee that above mapping exists and further that the mapping $y \mapsto b(y,\nu_t) \partial_{\mu}G(G^{-1}(z,\nu_t),\nu_t)(y)$ is continuous in zero. We start by analysing the Lipschitz continuity of \eqref{eq:MEAS} with respect to the measure variable.
Consider now an arbitrary coupling $\Pi_t(\cdot,\cdot)$ between $\nu_t(\cdot)$ and $\mu_t(\cdot)$, for $(\mu_t)_{t \in [0,T]}, (\nu_t)_{t \in [0,T]} \in \mathcal{P}^{b}$, and estimate
\begin{align*}
&\int_{\RR^2} \left( b(y,\nu_t) \partial_{\mu}G(G^{-1}(z,\nu_t),\nu_t)(y) - b(x,\mu_t) \partial_{\mu}G(G^{-1}(z,\mu_t),\mu_t)(x) \right) \, \Pi_t(\mathrm{d}y,\mathrm{d}x) \\
& = \int_{\RR^2} \left( b(y,\nu_t) \partial_{\mu}G(G^{-1}(z,\nu_t),\nu_t)(y) - b(x,\mu_t) \partial_{\mu}G(G^{-1}(z,\nu_t),\mu_t)(x) \right) \, \Pi_t(\mathrm{d}y,\mathrm{d}x) \\
& \quad + \int_{\RR^2} \left(b(x,\mu_t) \partial_{\mu}G(G^{-1}(z,\nu_t),\mu_t)(x) - b(x,\mu_t) \partial_{\mu}G(G^{-1}(z,\mu_t),\mu_t)(x) \right) \, \Pi_t(\mathrm{d}y,\mathrm{d}x).
\end{align*}
Note that 
\begin{align*}
& \int_{\RR^2} \left| b(y,\nu_t) \partial_{\mu}G(G^{-1}(z,\nu_t),\nu_t)(y) - b(x,\mu_t) \partial_{\mu}G(G^{-1}(z,\nu_t),\mu_t)(x) \right| \, \Pi_t(\mathrm{d}y,\mathrm{d}x) \\
& =  \int_{\RR^2}  \Big| b(y,\nu_t) \partial_{\mu}\alpha(\nu_t)(y) \bar{\phi}(G^{-1}(z,\nu_t)) - b(x,\mu_t) \partial_{\mu}\alpha(\mu_t)(x)\bar{\phi}(G^{-1}(z,\nu_t)) \Big| \, \Pi_t(\mathrm{d}y,\mathrm{d}x) \\
& \leq C \mathcal{W}_2(\mu_t,\nu_t),
\end{align*}
where we used (H.\ref{Assum:AA}(\ref{Assum:AA5})) and the fact that $\bar{\phi}$ is bounded. Furthermore, from the boundedness of $(x,\nu_t) \mapsto \partial_{\mu}\alpha(\nu_t)(x)$ and (H.\ref{Assum:AA}(\ref{Assum:AA2})), we derive
\begin{align}\label{eq:ML}
&\int_{\RR^2} \left|b(x,\mu_t) \partial_{\mu}G(G^{-1}(z,\nu_t),\mu_t)(x) - b(x,\mu_t) \partial_{\mu}G(G^{-1}(z,\mu_t),\mu_t)(x) \right| \, \Pi_t(\mathrm{d}y,\mathrm{d}x) \notag \\
& \leq \int_{\RR^2} \left|  b(x,\mu_t)\partial_{\mu}\alpha(\mu_t)(x)  \right| \left| \bar{\phi}(G^{-1}(z,\nu_t)) - \bar{\phi}(G^{-1}(z,\mu_t)) \right| \, \Pi_t(\mathrm{d}y,\mathrm{d}x) \notag \\
& \leq C  \int_{\RR^2} (1+|x|) \left | \bar{\phi}(G^{-1}(z,\nu_t)) - \bar{\phi}(G^{-1}(z,\mu_t)) \right| \, \Pi_t(\mathrm{d}y,\mathrm{d}x) \notag \\
& \leq C \mathcal{W}_2(\mu_t,\nu_t).
\end{align}
We remark that in the last inequality, we used the Lipschitz continuity of $ z \mapsto |z| z \phi(z/c)$, Proposition \ref{Prop} and employed that $(\mu_t)_{t \in [0,T]}$ is an element of the space $\mathcal{P}^{b}$. In a similar manner, we can show the Lipschitz continuity of 
\begin{equation*}
z \mapsto \int_{\RR} b(y,\nu_t) \partial_{\mu}G(G^{-1}(z,\nu_t),\nu_t)(y) \, \nu_t(\mathrm{d}y).
\end{equation*}
Analogous statements can be derived for
\begin{align*}
(z,\nu_t) \mapsto \int_{\RR} \frac{\sigma^{2}(y)}{2} \partial_y \partial_{\mu}G(G^{-1}(z,\nu_t),\nu_t)(y) \, \nu_t(\mathrm{d}y),
\end{align*}
taking (H.\ref{Assum:A}(\ref{Assum:A1})) and (H.\ref{Assum:AA}(\ref{Assum:AA3})) into account.
\end{proof}
We are now ready to present the main result and its proof of this section:
\begin{theorem}\label{TH:MAIN}
Let Assumption (H.\ref{Assum:AA}) be satisfied, let $\xi \in L_p^{0}(\mathbb{R})$ for a given $p \geq 2$ and assume that the constant $c$ is sufficiently small (as in Remark \ref{REM:LIP}). Then, the McKean--Vlasov SDE defined in (\ref{eq:Model2}) has a unique strong solution in $\mathcal{S}^{p}([0,T])$.
\end{theorem}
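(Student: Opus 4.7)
The plan is to recast \eqref{eq:Model2} as a fixed-point problem on the space $\mathcal{P}^{b}$ of measure flows by using the measure-dependent transformation $G$ from \eqref{eq:TransMeas}. The route is: (i) for a frozen flow $(\mu_t)_{t\in[0,T]}\in\mathcal{P}^{b}$, solve the decoupled transformed SDE \eqref{eq:Transf} for $Z^{\mu}$; (ii) define $X^{\mu}_{t}:=G^{-1}(Z^{\mu}_{t},\mu_{t})$ and thereby a map $\Phi((\mu_t)):=(\mathcal{L}_{X^{\mu}_{t}})_{t\in[0,T]}$; (iii) show $\Phi$ is a self-map of $\mathcal{P}^{b}$ and a contraction in an appropriate metric; (iv) recover the strong solution of \eqref{eq:Model2} from the fixed point via It\^o's formula applied to $G^{-1}$, with the $p$-moment bound following from the linear growth of $\tilde{b},\tilde{\sigma}$ and the global Lipschitz property of $G^{-1}$.

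Step (i) is the easiest piece: by Lemma \ref{lem:Lip} the coefficients $\tilde{b},\tilde{\sigma}$ of \eqref{eq:Transf} are Lipschitz on $\RR\times\mathcal{P}^{b}$, so standard SDE theory delivers a unique strong solution $Z^{\mu}\in\mathcal{S}^{p}([0,T])$ with the usual $L_p$-moment bound depending only on the Lipschitz constants, $T$, $p$, and $\EE|\xi|^p$. For step (ii), the key consequence of Proposition \ref{Prop} (with the standing choice of $c$ from Remark \ref{REM:LIP}) is the pointwise estimate
\begin{equation*}
|X^{\mu}_{t}-X^{\nu}_{t}|^{2}=|G^{-1}(Z^{\mu}_{t},\mu_{t})-G^{-1}(Z^{\nu}_{t},\nu_{t})|^{2}\le 8|Z^{\mu}_{t}-Z^{\nu}_{t}|^{2}+2L(c)^{2}\mathcal{W}_{2}^{2}(\mu_{t},\nu_{t}),
\end{equation*}
with $2L(c)^{2}<\tfrac12$. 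Taking expectations and using $\mathcal{W}_{2}^{2}(\Phi(\mu)_{t},\Phi(\nu)_{t})\le\EE|X^{\mu}_{t}-X^{\nu}_{t}|^{2}$ bounds the iterate difference in terms of the $Z$-difference and the input Wasserstein distance. That $\Phi$ maps $\mathcal{P}^{b}$ into itself follows by combining the uniform $L_p$-bound for $Z^{\mu}$ with the Lipschitz property of $G^{-1}$ in the spatial variable and, if necessary, readjusting $\bar C$ in the definition of $\mathcal{P}^{b}$.

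The central estimate in step (iii) is the contraction. Both $Z^{\mu}$ and $Z^{\nu}$ start from the same initial value, so the Lipschitz property of $\tilde{b},\tilde{\sigma}$ on $\RR\times\mathcal{P}^{b}$ together with It\^o's isometry and Burkholder--Davis--Gundy give, by the standard Gronwall argument,
\begin{equation*}
\EE\bigl[\sup_{s\le t}|Z^{\mu}_{s}-Z^{\nu}_{s}|^{2}\bigr]\le C\int_{0}^{t}\sup_{r\le s}\mathcal{W}_{2}^{2}(\mu_{r},\nu_{r})\,\mathrm{d}s.
\end{equation*}
Plugging this into the $X$-estimate above yields, writing $D(t):=\sup_{s\le t}\mathcal{W}_{2}^{2}(\mu_{s},\nu_{s})$ and $\tilde D(t):=\sup_{s\le t}\mathcal{W}_{2}^{2}(\Phi(\mu)_{s},\Phi(\nu)_{s})$,
\begin{equation*}
\tilde D(t)\le C\int_{0}^{t}D(s)\,\mathrm{d}s+2L(c)^{2}\,D(t).
\end{equation*}
The $2L(c)^{2}D(t)$-term cannot be absorbed into the integral, but since $2L(c)^{2}<\tfrac12$ one can introduce the exponentially weighted norm $\|\mu-\nu\|_{\beta}^{2}:=\sup_{t\le T}e^{-\beta t}\mathcal{W}_{2}^{2}(\mu_{t},\nu_{t})$ and obtain $\|\Phi(\mu)-\Phi(\nu)\|_{\beta}\le(C/\beta+2L(c)^{2})^{1/2}\|\mu-\nu\|_{\beta}$, which is a strict contraction for $\beta$ sufficiently large; equivalently, one may iterate in time on intervals short enough that $CT+2L(c)^{2}<1$. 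Banach's fixed-point theorem then furnishes a unique $(\mu_t)\in\mathcal{P}^{b}$ with $\Phi((\mu_t))=(\mu_t)$.

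Finally, for step (iv), with the fixed flow at hand, $X_{t}:=G^{-1}(Z^{\mu}_{t},\mu_{t})$ satisfies $\mathcal{L}_{X_{t}}=\mu_{t}$ by construction; applying It\^o's formula to $G^{-1}$ (justified as in \cite{EL} because $G$ has the special form of a Lipschitz perturbation of the identity with the second derivative-type singularity only at $0$, and the diffusion does not vanish there) shows that $X$ solves \eqref{eq:Model2}. Uniqueness in $\mathcal{S}^{p}([0,T])$ follows because any strong solution induces a flow in $\mathcal{P}^{b}$ that is a fixed point of $\Phi$. The main obstacle is precisely the contraction step: the measure dependence of $G^{-1}$ contributes a non-integrated term $2L(c)^{2}\mathcal{W}_{2}^{2}(\mu_{t},\nu_{t})$ which no Gronwall-type argument can dissolve, and it is only the smallness assumption on $c$ in Remark \ref{REM:LIP} (ensuring $2L(c)^{2}<1$) that makes the whole scheme work.
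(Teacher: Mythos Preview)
Your proposal is correct and follows essentially the same route as the paper's proof: freeze the flow, solve the transformed SDE \eqref{eq:Transf} via Lemma~\ref{lem:Lip}, push back through $G^{-1}$, and run a contraction argument on the space of measure flows where the non-integrated term $2L(c)\mathcal{W}_2^2(\mu_t,\nu_t)$ is controlled by the smallness of $c$ from Remark~\ref{REM:LIP}. The only cosmetic difference is that the paper carries out the contraction via a Picard iteration on short subintervals $[0,T_0]$ with $CT_0+2L(c)<1$ and then propagates, whereas you also offer the equivalent weighted-norm alternative; both are standard and you correctly identify the key obstacle.
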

\begin{proof}
First, we remark that for any given flow of measures $(\mu_t)_{t \in [0,T]} \in \mathcal{C}([0,T],\mathcal{P}_2(\RR))$ the SDE defined in \eqref{eq:Transf} has a unique strong solution by Lemma \ref{lem:Lip}. Now, for $(\mu_t)_{t \in [0,T]}, (\nu_t)_{t \in [0,T]} \in  \mathcal{P}^{b}$, we obtain, for any $t \in [0,T]$, using Lemma \ref{lem:Lip}, BDG's inequality and H\"{o}lder's inequality along with Gronwall's inequality
\begin{align*}
\mathbb{E}\left[ |Z^{\mu}_t-Z^{\nu}_t|^2 \right] & \leq  C \left(  \mathbb{E} \left[ \int_{0}^{t} |\tilde{b}(Z^{\mu}_s,\mu_s) - \tilde{b}(Z^{\nu}_s,\nu_s)|^2 \, \mathrm{d}s \right] +   \mathbb{E} \left[ \int_{0}^{t} |\tilde{\sigma}(Z^{\mu}_s,\mu_s) - \tilde{\sigma}(Z^{\nu}_s,\nu_s)|^2 \, \mathrm{d}s \right] \right) \\
& \leq C \Bigg( \mathbb{E} \left[ \int_{0}^{t} |\tilde{b}(Z^{\mu}_s,\mu_s) - \tilde{b}(Z^{\nu}_s,\mu_s)|^2 \, \mathrm{d}s \right] +  \mathbb{E} \left[ \int_{0}^{t} |\tilde{b}(Z^{\nu}_s,\mu_s) - \tilde{b}(Z^{\nu}_s,\nu_s)|^2 \, \mathrm{d}s \right] \\
& \quad +  \mathbb{E} \left[ \int_{0}^{t} |\tilde{\sigma}(Z^{\mu}_s,\mu_s) - \tilde{\sigma}(Z^{\nu}_s,\mu_s)|^2 \, \mathrm{d}s \right] +  \mathbb{E} \left[ \int_{0}^{t} |\tilde{\sigma}(Z^{\nu}_s,\mu_s) - \tilde{\sigma}(Z^{\nu}_s,\nu_s)|^2 \, \mathrm{d}s \right] \Bigg) \\
& \leq C \mathbb{E} \left[ \int_{0}^{t} \left( |Z^{\mu}_s-Z^{\nu}_s|^2 + \mathcal{W}_2^{2}(\mu_s,\nu_s) \right)  \mathrm{d}s \right] \leq C \int_{0}^{t}  \mathcal{W}_2^{2}(\mu_s,\nu_s)  \, \mathrm{d}s.
\end{align*}
For $k \geq 0$ and $t \in [0,T]$, we define the Picard iteration
\begin{equation}\label{eq:PIC}
\mu_t^{k+1} = \text{Law}\left(G^{-1}(Z_t^{\mu^{k}},\mu_t^{k}) \right),
\end{equation}
with $Z_t^{\mu^{0}} = G(\xi,\delta_{\xi})$ and $\mu_t^{0}=\delta_{\xi}$. Note that by It\^{o}'s formula (applied to $G^{-1}$), the process defined by $X_t^{k+1}:= G^{-1}(Z_t^{\mu^{k}},\mu_t^{k})$ is the solution to 
\begin{equation*}
\mathrm{d} X^{k+1}_t = b(X^{k+1}_t,\mu_t^{k}) \, \mathrm{d}t + \sigma(X^{k+1}_t) \, \mathrm{d}W_t, \quad  X^{k+1}_0= \xi.
\end{equation*}
Recall that $(X^{k+1}_t)_{t \in [0,T]}$ has uniformly bounded moments (uniformly in $k$), due to (H.\ref{Assum:A}(\ref{Assum:A1})) and (H.\ref{Assum:AA}(\ref{Assum:AA2})), i.e., we have 
\begin{equation}\label{eq:BMom}
\sup_{k \geq 1} \mathbb{E}\left[\sup_{0 \leq t \leq T} |X^{k}_t|^{p} \right] \leq C(1+\mathbb{E}[|\xi|^{p}]).
\end{equation}
 
The applicability of It\^{o}'s formula for $G^{-1}$ is a consequence of the fact that the inverse inherits the regularity of $G$, in particular the mapping $\mathcal{P}_2(\RR) \ni \mu \mapsto G^{-1}(y,\mu)$ is still an element of the class $\mathcal{C}^{(1,1)}_{b}$ (see, Proposition \ref{Prop:LD} in Appendix \ref{SEC:AA1}).
Then above estimate and Proposition \ref{Prop} yield
\begin{align}\label{eq:IT}
\sup_{t \in [0, T]} \mathcal{W}_2^{2}(\mu_t^{k+1},\mu_t^{k})  & \leq \sup_{t \in [0, T]}\mathbb{E}\left[|X^{k+1}_t-X^{k}_t|^2 \right] \notag \\
 & \leq 2 \sup_{t \in [0, T]} \mathbb{E}\left[ |G^{-1}(Z_t^{\mu^{k}},\mu_t^{k}) -  G^{-1}(Z_t^{\mu^{k-1}},\mu_t^{k})|^2 \right] \notag \\
& \quad + 2 \sup_{t \in [0, T]}\mathbb{E}\left[ |G^{-1}(Z_t^{\mu^{k-1}},\mu_t^{k}) -  G^{-1}(Z_t^{\mu^{k-1}},\mu_t^{k-1})|^2 \right] \notag \\
& \leq  C  \int_{0}^{T}  \mathcal{W}_2^{2}(\mu^{k}_s,\mu^{k-1}_s)   \mathrm{d}s + 2L(c) \sup_{t \in [0, T]} \mathcal{W}_2^{2}(\mu^{k}_{t},\mu^{k-1}_{t}) \notag \\
& \leq C\int_{0}^{T}  \mathcal{W}_2^{2}(\mu^{k}_s,\mu^{k-1}_s)   \mathrm{d}s +  2L(c) \sup_{t \in [0, T]} \mathbb{E}\left[|X^{k}_{t}-X^{k-1}_{t}|^2 \right],
\end{align}
where $L:= 2L(c) < 1$ due to the choice of $c$ and $C>0$ is a constant depending on the constants appearing in Proposition \ref{Prop} and Lemma \ref{lem:Lip}. 

Let now $0< T_0 <T$ such that $CT_0 + L < 1$. With this choice the uniqueness of (\ref{eq:Model2}) on $[0,T_0]$ follows from the estimate in (\ref{eq:IT}) by assuming there exist two solutions $(X,\mu)$ and $(Y,\nu)$ to (\ref{eq:Model2}), with $\mu_t$ and $\nu_t$ the marginal laws of $X_t$ and $Y_t$, respectively, for $t \in [0,T_0]$. In addition, we observe that the sequence of flows $(\mu^{k})_{k}$, for $\mu^{k}=(\mu^{k}_t)_{t \in [0,T_0]}$, is a Cauchy sequence in the complete metric space $\mathcal{C}([0,T],\mathcal{P}_2(\RR))$ equipped with the Wasserstein distance  $\sup_{t \in [0, T_0]} \mathcal{W}_2(\mu_t,\nu_t)$. Hence, \eqref{eq:PIC} has a fixed point, in particular we have 
$X_t = G^{-1}(Z_t^{\mu},\mu_t)$, where $\mu_t=\mathcal{L}_{X_t}$. It\^{o}'s formula applied to $G^{-1}$ yields the claim for the time interval $[0,T_0]$. Repeating the above procedure starting at $T_0$, we can extend the solution to the interval $[T_0,T_1]$, for some $T_0 < T_1 <T$. This is possible as the choice of $T_1$ depends on $X_{T_0}$ only through the second moment of $X_{T_0}$, for which we have a uniform bound for the entire interval $[0,T]$, see \eqref{eq:BMom}. Proceeding in such a manner, we can obtain well-posedness of (\ref{eq:Model2}) on $[0,T]$.
\end{proof}
\subsection{Interacting particle system with non-decomposable drift}\label{Sec:IAP}
In the following, we state the model assumptions which will specify the set-up for this subsection: 
\begin{Assumption}\label{Assum:AAA}
Assumptions (H.\ref{Assum:AA}(\ref{Assum:AA4})) and (H.\ref{Assum:AA}(\ref{Assum:AA5})) are satisfied and we require:
\begin{enumerate}[(1)]
    \item \label{Assum:AAb1} 
    Assumption (H.\ref{Assum:A}(\ref{Assum:A1})) holds and there exists a constant $L >0$ such that $|\sigma(x)| \leq L$ for all $x \in \mathbb{R}$.     
   \item \label{Assum:AAA1}
   There exists a constant $L_1>0$ such that  
     \begin{equation*}
      | b(x,\mu)  - b(x,\nu) | \leq L_1 \mathcal{W}_2(\mu,\nu) \quad \forall x \neq 0 \in \mathbb{R}, \ \forall \mu, \nu \in  \mathcal{P}_2(\mathbb{R}).
    \end{equation*}  
   Further, for any $\mu \in \mathcal{P}_2(\RR)$, $x \mapsto b(x,\mu)$ is piecewise Lipschitz continuous on the subintervals $(-\infty,0)$ and $(0,\infty)$, uniformly with respect to $\mu$.
     \item \label{Assum:AAA2} 
     $\alpha \in \mathcal{C}^{(1,2)}_{b}$ is a bounded function and the mappings
      \begin{align*}
        & \mathcal{P}_2(\RR) \times \RR \ni (\mu,y) \mapsto \partial_{\mu} \alpha(\mu)(y), \\
        & \mathcal{P}_2(\RR) \times \RR \ni (\mu,y) \mapsto \partial_y \partial_{\mu} \alpha(\mu)(y),\\
        & \mathcal{P}_2(\RR) \times \RR \times \RR \ni (\mu,y,y') \mapsto \partial^{2}_{\mu}\alpha(\mu)(y,y'),
      \end{align*}     
        are bounded and Lipschitz continuous. 
\end{enumerate} 
\end{Assumption}
\begin{remark}
Note that, compared to (H.\ref{Assum:AA}(\ref{Assum:AA2})), we do not require the drift to be uniformly bounded in the measure component.
\end{remark}
The interacting particles of the system $(X^{i,N}_t)_{t \in [0,T]}$, for $i \in \lbrace 1, \ldots, N \rbrace$ associated with (\ref{eq:Model2}) satisfy
\begin{equation}\label{eq:IPSystem2}
\mathrm{d}X_t^{i,N} = b(X_t^{i,N}, \mu_t^{\boldsymbol{X}^{N}}) \, \mathrm{d}t + \sigma(X_t^{i,N}) \, \mathrm{d}W_t^{i},
\end{equation} 
where $(\xi^{i},W^{i})$, for $i \in \lbrace 1, \ldots, N \rbrace$, are independent copies of $(\xi,W)$.

In contrast to the case of particle systems with decomposable drift, we set $\alpha: \mathcal{P}_2(\RR) \to\RR $,
\begin{equation*}
\alpha(\mu^{\boldsymbol{x}^{N}}) = \frac{b(0^{-},\mu^{\boldsymbol{x}^{N}})- b(0^{+},\mu^{\boldsymbol{x}^{N}})}{2\sigma^2(0)},
\end{equation*}
(which could also be interpreted as a mapping $\alpha_N: \RR^N \to \RR$) and apply to each particle the following transformation $G: \RR \times \mathcal{P}_2(\RR) \to\RR$,
\begin{equation}
\label{G_part}
G \left(x_i, \mu^{\boldsymbol{x}^{N}} \right)= x_i + \alpha\left( \mu^{\boldsymbol{x}^{N}}\right) x_i|x_i| \phi\big(x_i/c \big).
\end{equation}

We set $\RR^{N} \ni \boldsymbol{x}^{N} \mapsto G_i(\boldsymbol{x}^{N}) := G \left(x_i, \mu^{\boldsymbol{x}^{N}} \right)$, and use these mappings to define $\boldsymbol{G}_N: \RR^{N} \to \RR^{N}$ by 
\begin{equation*}
\boldsymbol{G}_N(\boldsymbol{x}^{N}) := \left(G_1(\boldsymbol{x}^{N}), \ldots, G_N(\boldsymbol{x}^{N}) \right)^{\top}.
\end{equation*}
To obtain the transformed process $(\boldsymbol{Z}^N_t)_{t \in [0,T]} = (Z^{1,N}_t,\ldots,Z^{N,N}_t)_{t \in [0,T]}^{\top} \in \RR^N$, we proceed as follows: For any $t \in [0,T]$ and $i \in \lbrace 1, \ldots, N \rbrace$, we have, using \cite[Proposition 5.35]{CD} (see also Section \ref{sec:Prelim})
\begin{align*} 
 \mathrm{d}G(X_t^{i,N},\mu_t^{\boldsymbol{X}^{N}}) &= \mathrm{d}G_i(X_t^{1,N},\ldots,X_t^{N,N}) \nonumber \\
&= \partial_{x_i} G(X_t^{i,N},\mu_t^{\boldsymbol{X}^{N}})\mathrm{d}X_t^{i,N}+\frac{1}{2} \partial^2_{x_i} G(X_t^{i,N},\mu_t^{\boldsymbol{X}^{N}})\mathrm{d}[X^{i,N}]_t \nonumber \\
& \quad + \frac{1}{N} \sum_{k =1}^{N} \partial_{\mu} G(X_t^{i,N},\mu_t^{\boldsymbol{X}^{N}})(X_t^{k,N}) \mathrm{d}X^{k,N}_t \nonumber \\
& \quad + \frac{1}{2N} \sum_{k =1}^{N} \partial_y \partial_{\mu}G(X_t^{i,N},\mu_t^{\boldsymbol{X}^{N}})(X_t^{k,N}) \mathrm{d}[X^{k,N}]_t \nonumber \\
& \quad + \frac{1}{2N^{2}} \sum_{k=1}^{N}\partial^2_{\mu}G(X_t^{i,N},\mu_t^{\boldsymbol{X}^{N}})(X_t^{k,N},X_t^{k,N}) \mathrm{d}[X^{k,N}]_t \nonumber \\
& \quad + \frac{1}{N} \partial_{x_i}  \partial_{\mu}G(X_t^{i,N},\mu_t^{\boldsymbol{X}^{N}})(X_t^{i,N}) \mathrm{d}[X^{i,N}]_t. \nonumber
\end{align*}
\noindent 
The applicability of It\^{o}'s formula for the function $G$ is guaranteed by \cite[Theorem 3.19]{sz2016b}. Note that Assumption 3.4 therein is imposed to guarantee Lipschitz continuity of second order derivatives of $G$ outside the set of discontinuities. Assumption (H.\ref{Assum:AAA}(\ref{Assum:AAA2})) and the definition of $G$ substitute this condition.   

Assuming for now the global invertibility of $\boldsymbol{G}_N$, we may introduce 
\begin{equation*}
\mathrm{d}\boldsymbol{Z}^{N}_t = \boldsymbol{B}_N\left(\boldsymbol{G}^{-1}_N(\boldsymbol{Z}^{N}_t) \right) \, \mathrm{d}t + \boldsymbol{\Sigma}_N\left(\boldsymbol{G}^{-1}_N(\boldsymbol{Z}^{N}_t) \right) \, \mathrm{d}\boldsymbol{W}^{N}_t, \quad \boldsymbol{Z}^{N}_0= \boldsymbol{G}_N((X_0^{1,N}, \ldots, X_0^{N,N})),
\end{equation*} 
where $\boldsymbol{W}^{N}_t = (W^{1}_t, \ldots, W^{N}_t)^{\top}$, $\boldsymbol{B}_N(\boldsymbol{x}^{N})=(B_1(\boldsymbol{x}^{N}), \ldots, B_N(\boldsymbol{x}^{N}))^{\top}$ is defined by
\begin{align}\label{Drift}
B_i(\boldsymbol{x}^{N})&:= \partial_{x_i} G(x_i,\mu^{\boldsymbol{x}^{N}}) b(x_i,\mu^{\boldsymbol{x}^{N}}) + \frac{1}{2} \sigma^{2}(x_i) \partial^2_{x_i} G(x_i,\mu^{\boldsymbol{x}^{N}}) \notag \\
& \quad +  \frac{1}{N} \sum_{k =1}^{N} \partial_{\mu} G(x_i,\mu^{\boldsymbol{x}^{N}})(x_k) b(x_k,\mu^{\boldsymbol{x}^{N}})  + \frac{1}{2N} \sum_{k =1}^{N} \partial_y \partial_{\mu}G(x_i,\mu^{\boldsymbol{x}^{N}})(x_k) \sigma^{2}(x_k) \nonumber \notag \\
& \quad + \frac{1}{2N^{2}} \sum_{k=1}^{N}\partial^2_{\mu}G(x_i,\mu^{\boldsymbol{x}^{N}})(x_k,x_k) \sigma^{2}(x_k) + \frac{1}{N} \partial_{x_i} \partial_{\mu}G(x_i,\mu^{\boldsymbol{x}^{N}})(x_i) \sigma^2(x_i), 
\end{align}
and $\boldsymbol{\Sigma}_N(\boldsymbol{x}^N) = \left(\Sigma^{i,j}(\boldsymbol{x}^N) \right)_{i,j \in \lbrace 1, \ldots, N \rbrace}$ by
\begin{align}\label{Diff}
\Sigma^{i,j}(\boldsymbol{x}^N) & = \partial_{x_i} G(x_i,\mu^{\boldsymbol{x}^{N}}) \sigma(x_i) \delta_{i,j} + \frac{1}{N}\partial_{\mu} G(x_i,\mu^{\boldsymbol{x}^{N}})(x_j) \sigma(x_j).
\end{align}
In the following lemma, we will prove the invertibility of $\boldsymbol{G}_N$. 
\begin{lemma}\label{lem:Inverse}
Let Assumption (H.\ref{Assum:AAA}(\ref{Assum:AAA2})) be satisfied and assume that the constant $c$ in \eqref{G_part} satisfies 
\begin{align}
\label{c_ineq}
c< \min \left(1, \left( \sup_{\boldsymbol{x}^N \in \RR^{N}} \left( |\alpha_N(\boldsymbol{x}^N)| + \max_{i \in \lbrace 1, \ldots, N \rbrace } |\partial_\mu \alpha(\mu^{\boldsymbol{x}^N})(x_i)| \right) \right)^{-1} \right).
\end{align}
Then, $\boldsymbol{G}_N$ has a global inverse.
\end{lemma}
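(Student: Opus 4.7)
My plan is to apply Hadamard's global inverse function theorem: a $\mathcal{C}^1$ map $F:\RR^N\to\RR^N$ is a global $\mathcal{C}^1$-diffeomorphism if and only if $DF$ is nonsingular at every point and $F$ is proper (i.e., preimages of compact sets are compact, equivalently $|F(\boldsymbol{x})|\to\infty$ as $|\boldsymbol{x}|\to\infty$). I will verify each of these two conditions for $\boldsymbol{G}_N$ using the structural assumption (H.\ref{Assum:AAA}(\ref{Assum:AAA2})) and the smallness of $c$ given by \eqref{c_ineq}.

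For the Jacobian, writing $\psi(x):=x|x|\phi(x/c)$ and using the identity $\partial_{x_j}[\alpha(\mu^{\boldsymbol{x}^N})]=\tfrac{1}{N}\partial_\mu\alpha(\mu^{\boldsymbol{x}^N})(x_j)$ from \cite[Proposition 5.35]{CD} (recalled in Section \ref{sec:Prelim}), a direct computation gives
\begin{equation*}
J_{ij}(\boldsymbol{x}^N)=\delta_{ij}\bigl[1+\alpha(\mu^{\boldsymbol{x}^N})\psi'(x_i)\bigr]+\tfrac{1}{N}\partial_\mu\alpha(\mu^{\boldsymbol{x}^N})(x_j)\,\psi(x_i),
\end{equation*}
so that $J=D+uv^{\top}$, with $D$ diagonal, $D_{ii}=1+\alpha(\mu^{\boldsymbol{x}^N})\psi'(x_i)$, $u_i=\psi(x_i)$ and $v_j=\tfrac{1}{N}\partial_\mu\alpha(\mu^{\boldsymbol{x}^N})(x_j)$. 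The computation from the proof of Proposition \ref{Prop} already yields $|\alpha(\mu)\psi'(x)|<\tfrac{c|\alpha(\mu)|}{2}<\tfrac12$ under \eqref{c_ineq}, so $D_{ii}>\tfrac12$ and $D$ is positive definite. I will then apply the matrix-determinant lemma,
\begin{equation*}
\det J=\det(D)\,\bigl(1+v^{\top}D^{-1}u\bigr),
\end{equation*}
and estimate the scalar correction using the sharp bound $|\psi(x)|=x^2(1-(x/c)^2)^3\mathbf{1}_{|x|\le c}\le \tfrac{27}{256}c^2$ (the maximum is attained at $|x|=c/2$) together with the boundedness of $\partial_\mu\alpha$. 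This yields
\begin{equation*}
|v^{\top}D^{-1}u|\le 2\max_i|\psi(x_i)|\cdot\max_i|\partial_\mu\alpha(\mu^{\boldsymbol{x}^N})(x_i)|\le \tfrac{27}{128}\,c\cdot\bigl(c\,\max_i|\partial_\mu\alpha(\mu^{\boldsymbol{x}^N})(x_i)|\bigr)<\tfrac{27}{128}<1,
\end{equation*}
since \eqref{c_ineq} guarantees both $c\le 1$ and $c\cdot\max_i|\partial_\mu\alpha(\mu^{\boldsymbol{x}^N})(x_i)|<1$. Hence $\det J>0$ everywhere and $\boldsymbol{G}_N$ is a local $\mathcal{C}^1$-diffeomorphism.

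For properness, the key observation is that $\phi(x/c)=0$ whenever $|x|>c$, so $G_i(\boldsymbol{x}^N)=x_i$ on the set $\{|x_i|>c\}$; consequently $|\boldsymbol{G}_N(\boldsymbol{x}^N)|^2\ge \sum_{i\colon|x_i|>c}x_i^2\ge |\boldsymbol{x}^N|^2-Nc^2\to\infty$ as $|\boldsymbol{x}^N|\to\infty$. Combining this with the nonsingularity of $J$, Hadamard's theorem delivers a global $\mathcal{C}^1$-inverse $\boldsymbol{G}_N^{-1}:\RR^N\to\RR^N$. I anticipate that the main technical point is the bound on $|v^{\top}D^{-1}u|$: the coupling between particles through the empirical measure precludes a coordinate-wise inversion (as was available in the decomposable case of the previous subsection), and the naive estimate $|\psi(x)|\le c^2$ is not quite strong enough to close the argument under \eqref{c_ineq} alone, so one must exploit the tighter constant $27/256$ arising from the explicit cubic-bump form of $\phi$.
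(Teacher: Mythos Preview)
Your proof is correct and, like the paper's, rests on Hadamard's global inverse function theorem; the verification of properness is essentially the same (the paper simply appeals to the boundedness of $\alpha$ and of $\bar\phi$). The genuine difference lies in how you establish nonsingularity of the Jacobian. The paper writes $\boldsymbol{G}_N'=\mathrm{I}+\mathcal{A}$ with $\mathcal{A}=\mathrm{diag}\big(\alpha_N\bar\phi'(x_i)\big)+\bar{\boldsymbol{\phi}}\,\boldsymbol{\alpha}_N'$ and bounds the operator norm directly,
\[
\|\mathcal{A}\|\le c\,|\alpha_N(\boldsymbol{x}^N)|+c^2\max_i|\partial_\mu\alpha(\mu^{\boldsymbol{x}^N})(x_i)|<1,
\]
using only the crude bounds $|\bar\phi'|\le c$, $|\bar\phi|\le c^2$; invertibility then follows from the ``close to the identity'' argument of \cite[Lemma 3.17]{sz2016b}. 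You instead split $J=D+uv^{\top}$, invert the diagonal part first, and apply the matrix-determinant lemma. As you correctly note, this route does \emph{not} close under the crude bound $|\psi|\le c^2$ (it only gives $|v^{\top}D^{-1}u|<2$), which is why you need the sharp constant $\sup_{|x|\le c}|\psi(x)|=\tfrac{27}{256}c^2$. Both arguments are valid; the paper's operator-norm route is slightly more robust in that it avoids the explicit optimisation over $\phi$, while your determinant route is a bit more explicit about the rank-one structure of the interaction through the empirical measure.
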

\begin{proof}
We will employ Hadamard's global inverse function theorem (see, e.g., \cite[Theorem 2.2]{MRMS}) to prove that $\boldsymbol{G}_N$ has a global inverse. To do so, we need to verify the following properties of $\boldsymbol{G}_N$: $\boldsymbol{G}_N$ is in $\mathcal{C}^{1}(\RR^N,\RR^{N \times N})$, $\lim_{|\boldsymbol{x}^N| \to \infty} |\boldsymbol{G}_N(\boldsymbol{x}^N)| = \infty$, and $\boldsymbol{G}'_N(\boldsymbol{x}^N)$ is invertible for all $\boldsymbol{x}^N \in \RR^N$. The first two mentioned conditions are obvious, due to the definition of $\boldsymbol{G}_N$ and the uniform boundedness of $\alpha$ and $\bar{\phi}(x) :=  x|x| \phi\big(x/c \big)$. 

Hence, we need to prove that $\boldsymbol{G}'_N(\boldsymbol{x}^N)$ is invertible. First, note that 
\begin{equation*}
\boldsymbol{G}'_N(\boldsymbol{x}^N) = \mathrm{I}_{N \times N} + \text{diag}_{N \times N}(\bar{\phi}'(x_1)\alpha_N(\boldsymbol{x}^N), \ldots,\bar{\phi}'(x_N)\alpha_N(\boldsymbol{x}^N))+ \bar{\boldsymbol{\phi}}(\boldsymbol{x}^N)\boldsymbol{\alpha}'_N(\boldsymbol{x}^N),
\end{equation*} 
where $\mathrm{I}_{N \times N}$ is the $N \times N$ identity matrix, $\bar{\boldsymbol{\phi}}(\boldsymbol{x}^N) = (\bar{\phi}(x_1),\ldots,\bar{\phi}(x_N))^{\top}$, with $(\boldsymbol{\alpha}_N'(\boldsymbol{x}^N))_i= \frac{1}{N}\partial_\mu \alpha(\mu^{\boldsymbol{x}^N})(x_i)$ and $\boldsymbol{\alpha}'_N$ is a row vector.

Now, we define 
\begin{equation*}
\mathcal{A}(\boldsymbol{x}^N) :=  \text{diag}_{N \times N}(\bar{\phi}'(x_1)\alpha_N(\boldsymbol{x}^N), \ldots,\bar{\phi}'(x_N)\alpha_N(\boldsymbol{x}^N)) + \bar{\boldsymbol{\phi}}(\boldsymbol{x}^N)\boldsymbol{\alpha}'_N(\boldsymbol{x}^N),
\end{equation*} 
and remark that $\boldsymbol{G}'_N(\boldsymbol{x}^N)$ can be identified with the linear operator $\mathrm{I}_{N \times N} + \mathcal{A}(\boldsymbol{x}^{N}): \RR^N \to \RR^N$. Therefore, succeeding in showing that $c$ can be chosen (uniformly in $\boldsymbol{x}^{N}$) in a way such that the operator norm of $\mathcal{A}(\boldsymbol{x}^N)$ is smaller than one would yield the claim, as in this case $\boldsymbol{G}'_N(\boldsymbol{x}^N)$ is close to the identity (see, \cite[Lemma 3.17]{sz2016b}). We compute
\begin{align*}
\| \mathcal{A}(\boldsymbol{x}^N) \|  &  \leq   \max_{i \in \lbrace{1, \ldots, N \rbrace}} |\bar{\phi}'(x_i)| |\alpha_N(\boldsymbol{x}^N)|+  \max_{i \in \lbrace 1, \ldots, N \rbrace} |\bar{\phi}(x_i)|  |\partial_\mu \alpha(\mu^{\boldsymbol{x}^N})(x_i)| \\
& \leq  c  |\alpha_N(\boldsymbol{x}^N)| +  c^2 \max_{i \in \lbrace 1, \ldots, N \rbrace}|\partial_\mu \alpha(\mu^{\boldsymbol{x}^N})(x_i)|,
\end{align*}
which implies that for
\begin{align*}
c< \min \left(1, \left(|\alpha_N(\boldsymbol{x}^N)| + \max_{i \in \lbrace 1, \ldots, N \rbrace}|\partial_\mu \alpha(\mu^{\boldsymbol{x}^N})(x_i)| \right)^{-1} \right),
\end{align*}
$\| \mathcal{A}(\boldsymbol{x}^N) \| < 1$. Note that (H.\ref{Assum:AAA}(\ref{Assum:AAA2})) guarantees that $\alpha$ and its derivatives are uniformly bounded, i.e., $c$ can be chosen uniformly in $\boldsymbol{x}^N$.
Therefore, Hadamard's global inverse function theorem proves that, for each given $N  \geq 1$, $\boldsymbol{G}_N: \RR^N \to \RR^N$ is a diffeomorphism. 
\end{proof}

We proceed by showing that the transformed SDE has (locally) Lipschitz continuous coefficients. 
\begin{lemma}\label{lem:lip}
Let Assumption (H.\ref{Assum:AAA}) be satisfied. Then, the coefficients $\boldsymbol{B}_N$ and $\boldsymbol{\Sigma}_N$ introduced in (\ref{Drift}) and (\ref{Diff}), respectively, are locally Lipschitz continuous with linear growth.
\end{lemma}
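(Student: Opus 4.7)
The plan is to exploit the factorised form $G(x,\mu)=x+\alpha(\mu)\bar{\phi}(x)$ with $\bar{\phi}(x):=x|x|\phi(x/c)$, record every derivative of $G$ appearing in \eqref{Drift}--\eqref{Diff} in closed form, and then analyse each resulting term of $B_i$ and $\Sigma^{i,j}$ separately. Direct differentiation gives
\[
\partial_x G = 1+\alpha(\mu)\bar{\phi}'(x),\quad \partial_x^2 G=\alpha(\mu)\bar{\phi}''(x),\quad \partial_\mu G(x,\mu)(y)=\partial_\mu\alpha(\mu)(y)\bar{\phi}(x),
\]
and analogous identities for $\partial_y\partial_\mu G$, $\partial_\mu^2 G$ and $\partial_{x_i}\partial_\mu G$, in which $\bar{\phi}$ (or $\bar{\phi}'$) is paired with a higher-order derivative of $\alpha$. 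The first regularity facts to record are that $\bar{\phi}$ and $\bar{\phi}'$ are globally Lipschitz, bounded, and compactly supported in $[-c,c]$, while $\bar{\phi}''$ is bounded and compactly supported but only piecewise Lipschitz on $(-\infty,0)$ and $(0,\infty)$ with one-sided limits $\pm 2\phi(0)$ at the origin. Combined with the boundedness and Lipschitz properties of $\alpha$ and its measure derivatives supplied by (H.\ref{Assum:AAA}(\ref{Assum:AAA2})) and the Lipschitz boundedness of $\sigma$ from (H.\ref{Assum:AAA}(\ref{Assum:AAb1})), this makes every factor in the formulae for $B_i$ and $\Sigma^{i,j}$ explicit and tractable.

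For all terms in $B_i$ from the second line of \eqref{Drift} onwards, and for every entry of $\Sigma^{i,j}$ in \eqref{Diff}, the integrand is a product of such factors, each of which is either bounded and globally Lipschitz in the state (because it carries a power of $\bar{\phi}$, $\bar{\phi}'$ or $\sigma$) or bounded and Lipschitz in $\mu$. Combining these via the product rule, and using the standard contraction estimate $\mathcal{W}_2(\mu^{\boldsymbol{x}^N},\mu^{\boldsymbol{y}^N})\le N^{-1/2}|\boldsymbol{x}^N-\boldsymbol{y}^N|$ for empirical measures, yields local (in fact global) Lipschitz continuity of these contributions as functions of $\boldsymbol{x}^N\in\RR^N$; linear growth is automatic because every non-$b$ factor is bounded via the compact support of $\bar{\phi}$ and its derivatives, while any appearance of $b(x_k,\mu^{\boldsymbol{x}^N})$ is controlled by the piecewise Lipschitzness of $b$ in $x$ combined with Lipschitzness in $\mu$ and $\mathcal{W}_2(\mu^{\boldsymbol{x}^N},\delta_0)\le|\boldsymbol{x}^N|/\sqrt{N}$.

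The delicate step, and the real obstacle, is the first line of \eqref{Drift},
\[
\bigl(1+\alpha(\mu^{\boldsymbol{x}^N})\bar{\phi}'(x_i)\bigr)\,b(x_i,\mu^{\boldsymbol{x}^N})+\tfrac12\sigma^2(x_i)\alpha(\mu^{\boldsymbol{x}^N})\bar{\phi}''(x_i),
\]
in which both $b(\cdot,\mu)$ and $\bar{\phi}''$ are only piecewise Lipschitz and genuinely discontinuous at $x_i=0$. Here I would use $\bar{\phi}'(0)=0$ and $\bar{\phi}''(0^\pm)=\pm 2\phi(0)=\pm 2$, together with the definition of $\alpha(\mu)$ in \eqref{eq:TransMeas2}, to verify that both one-sided limits as $x_i\to 0^\pm$ coincide with $\tfrac12(b(0^+,\mu)+b(0^-,\mu))$, so the two discontinuities cancel exactly and the combined expression is continuous across $x_i=0$. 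Since on each of the half-lines it is a product of factors Lipschitz in $x_i$ uniformly in $\mu$, and Lipschitz in $\mu^{\boldsymbol{x}^N}$ through (H.\ref{Assum:AAA}(\ref{Assum:AAA1})) and (H.\ref{Assum:AAA}(\ref{Assum:AAA2})), the cancellation promotes it to a locally Lipschitz function of $\boldsymbol{x}^N$ with linear growth; this is precisely the cancellation mechanism used in \cite{sz2016a} and in the proof of Lemma \ref{lem:Lip}, and once it is isolated the lemma follows by assembling the individual contributions.
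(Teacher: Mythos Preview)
Your overall strategy matches the paper's: write out all derivatives of $G$ through $\alpha$ and $\bar{\phi}$, handle most terms via products of bounded Lipschitz factors, and isolate the cancellation of the $x_i=0$ discontinuity in the combination $b(x_i,\mu)+\tfrac12\sigma^2(x_i)\alpha(\mu)\bar{\phi}''(x_i)$ for the first line of \eqref{Drift}. That cancellation argument (your ``delicate step'') is correct and is exactly what the paper does for its term $\Pi_1$.

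There is, however, a genuine gap in your treatment of the second line of \eqref{Drift}, namely the term
\[
\frac{1}{N}\sum_{k=1}^N \partial_\mu G(x_i,\mu^{\boldsymbol{x}^N})(x_k)\,b(x_k,\mu^{\boldsymbol{x}^N})
=\frac{1}{N}\sum_{k=1}^N \partial_\mu\alpha(\mu^{\boldsymbol{x}^N})(x_k)\,\bar{\phi}(x_i)\,b(x_k,\mu^{\boldsymbol{x}^N}).
\]
You classify this as an ``easy'' product of bounded globally Lipschitz factors, but the factor $b(x_k,\mu^{\boldsymbol{x}^N})$ is neither bounded nor Lipschitz in $x_k$: it is discontinuous at $x_k=0$ and of linear growth. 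So this term contains a \emph{second} discontinuity, at $x_k=0$, which your sketch never addresses. The paper resolves it not by a new cancellation but by invoking the structural assumptions (H.\ref{Assum:AA}(\ref{Assum:AA4})) and (H.\ref{Assum:AA}(\ref{Assum:AA5})) (both part of (H.\ref{Assum:AAA})): the vanishing $\partial_\mu\alpha(\mu)(0)=0$ makes the product $\partial_\mu\alpha(\mu)(y)\,b(y,\mu)$ continuous across $y=0$, and (H.\ref{Assum:AA}(\ref{Assum:AA5})) directly asserts that this product is Lipschitz in $(\mu,y)$. Without invoking these hypotheses you cannot control the increment in $x_k$ here. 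Moreover, because $b$ has linear growth, the remaining increment $|\bar{\phi}(x_i)-\bar{\phi}(y_i)|\cdot|\partial_\mu\alpha(\mu^{\boldsymbol{y}^N})(y_k)\,b(y_k,\mu^{\boldsymbol{y}^N})|$ picks up a factor growing like $|y_k|+\mathcal{W}_2(\mu^{\boldsymbol{y}^N},\delta_0)$, which is why the paper obtains only a \emph{local} Lipschitz constant $L_R$ for this term, not the global one you claim.
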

\begin{proof}
For $\boldsymbol{x}^{N}, \boldsymbol{y}^{N} \in \RR^N$, we obtain using (\ref{Diff}) 
\begin{align*}
& \| \boldsymbol{\Sigma}_N(\boldsymbol{x}^{N}) - \boldsymbol{\Sigma}_N(\boldsymbol{y}^{N}) \|^2  \leq \sum_{i=1}^{N} |\Sigma^{i,i}(\boldsymbol{x}^{N}) - \Sigma^{i,i}(\boldsymbol{y}^{N})|^2 + \sum_{i \neq j} |\Sigma^{i,j}(\boldsymbol{x}^{N}) - \Sigma^{i,j}(\boldsymbol{y}^{N})|^2 \\
& \leq \sum_{i=1}^{N} |\partial_{x_i} G(x_i,\mu^{\boldsymbol{x}^{N}}) \sigma(x_i) - \partial_{y_i} G(y_i,\mu^{\boldsymbol{y}^{N}}) \sigma(y_i)|^2  \\
& \quad  + \frac{1}{N^2} \sum_{i \neq j} |\partial_{\mu} G(x_i,\mu^{\boldsymbol{x}^{N}})(x_j) \sigma(x_j) -\partial_{\mu} G(y_i,\mu^{\boldsymbol{x}^{N}})(y_j) \sigma(y_j)|^2 \\
& \leq C |\boldsymbol{x}^{N} - \boldsymbol{y}^{N}|^2,
\end{align*}
where we used (H.\ref{Assum:AAA}(\ref{Assum:AAb1})), and the Lipschitz continuity of the functions $x \mapsto \partial_{x} G(x,\mu)$ and $\mu \mapsto \partial_{x} G(x,\mu)$ to estimate the first term. Assumptions (H.\ref{Assum:AAA}(\ref{Assum:AAb1})) and (H.\ref{Assum:AAA}(\ref{Assum:AAA2})), in particular the Lipschitz continuity of $x \mapsto \partial_{\mu} G(x,\mu)(y)$ and $\mu \mapsto \partial_{\mu} G(x,\mu)(y)$, are employed to handle the second sum. Also note that all these mappings are bounded due to (H.\ref{Assum:AAA}(\ref{Assum:AAA2})) and the definition of the transformation. 

Noting that
\begin{align*}
& | \boldsymbol{B}_N(\boldsymbol{x}^{N}) - \boldsymbol{B}_N(\boldsymbol{y}^{N}) |^2 = \sum_{i=1}^{N} |B_i(\boldsymbol{x}^{N}) - B_i(\boldsymbol{y}^{N})|^2,
\end{align*}
we further obtain for the drift  
\begin{align*}
& |B_i(\boldsymbol{x}^{N}) - B_i(\boldsymbol{y}^{N})|^2 \\
& \leq C \Bigg( |\partial_{x_i} G(x_i,\mu^{\boldsymbol{x}^{N}}) b(x_i,\mu^{\boldsymbol{x}^{N}}) + \frac{1}{2} \sigma^{2}(x_i) \partial^2_{x_i} G(x_i,\mu^{\boldsymbol{x}^{N}}) \\
& \hspace{2cm}-\partial_{y_i} G(y_i,\mu^{\boldsymbol{y}^{N}}) b(y_i,\mu^{\boldsymbol{y}^{N}}) - \frac{1}{2} \sigma^{2}(y_i) \partial^2_{y_i} G(y_i,\mu^{\boldsymbol{y}^{N}})|^2  \\
& \quad +\frac{1}{N}\sum_{k =1}^{N}| \partial_{\mu} G(x_i,\mu^{\boldsymbol{x}^{N}})(x_k) b(x_k,\mu^{\boldsymbol{x}^{N}}) - \partial_{\mu} G(y_i,\mu^{\boldsymbol{y}^{N}})(y_k) b(y_k,\mu^{\boldsymbol{y}^{N}})|^2 \\
& \quad + \frac{1}{2N} \sum_{k =1}^{N} |\partial_y \partial_{\mu}G(x_i,\mu^{\boldsymbol{x}^{N}})(x_k) \sigma^{2}(x_k) - \partial_y \partial_{\mu}G(y_i,\mu^{\boldsymbol{x}^{N}})(y_k) \sigma^{2}(y_k)|^2  \\
& \quad + \frac{1}{2N^{2}} \sum_{k=1}^{N}|\partial^2_{\mu}G(x_i,\mu^{\boldsymbol{x}^{N}})(x_k,x_k) \sigma^{2}(x_k)-\partial^2_{\mu}G(y_i,\mu^{\boldsymbol{y}^{N}})(y_k,y_k) \sigma^{2}(y_k)|^2 \\
& \quad + \frac{1}{N} |\partial_{x_i} \partial_{\mu}G(x_i,\mu^{\boldsymbol{x}^{N}})(x_i) \sigma^2(x_i)-\partial_{y_i} \partial_{\mu}G(y_i,\mu^{\boldsymbol{y}^{N}})(y_i)) \sigma^2(y_i)|^2 \Bigg)=: \sum_{i=1}^{5} \Pi_i.
\end{align*}
That the terms $\Pi_3,\Pi_4$ and $\Pi_5$ allow a Lipschitz bound is a consequence of (H.\ref{Assum:AAA}(\ref{Assum:AAb1})) and (H.\ref{Assum:AAA}(\ref{Assum:AAA2})). 

For any $R>0$, in view of (H.\ref{Assum:AAA}(\ref{Assum:AAA1})) and (H.\ref{Assum:AAA}(\ref{Assum:AAA2})), we derive the following estimate for $\Pi_2$: 
\begin{align*}
\Pi_2 &\leq \frac{C}{N} \Bigg( \sum_{k = 1}^{N}  \left|  \partial_{\mu} \alpha(\mu^{\boldsymbol{x}^{N}})(x_k)\bar{\phi}(x_i) b(x_k,\mu^{\boldsymbol{x}^{N}}) -  \partial_{\mu} \alpha(\mu^{\boldsymbol{y}^{N}})(y_k)\bar{\phi}(x_i)b(y_k,\mu^{\boldsymbol{y}^{N}}) \right|^2  \\
& \quad +  \sum_{k = 1}^{N}  \left|  \partial_{\mu} \alpha(\mu^{\boldsymbol{y}^{N}})(y_k)\bar{\phi}(x_i)b(y_k,\mu^{\boldsymbol{y}^{N}}) -  \partial_{\mu} \alpha(\mu^{\boldsymbol{y}^{N}})(y_k)\bar{\phi}(y_i)b(y_k,\mu^{\boldsymbol{y}^{N}}) \right|^2 \Bigg) \\
& \leq \frac{C}{N} \sum_{k = 1}^{N}  \left|  \partial_{\mu} \alpha(\mu^{\boldsymbol{x}^{N}})(x_k) b(x_k,\mu^{\boldsymbol{x}^{N}}) -  \partial_{\mu} \alpha(\mu^{\boldsymbol{y}^{N}})(y_k)b(y_k,\mu^{\boldsymbol{y}^{N}}) \right|^2 + L_R|x_i-y_i|^2,
\end{align*}
for some constant $L_R>0$ and any $|\boldsymbol{x}^{N}|, |\boldsymbol{y}^{N}| \leq R$. We proceed with the estimate 
\begin{equation*}
\sum_{k = 1}^{N}  \left|  \partial_{\mu} \alpha(\mu^{\boldsymbol{x}^{N}})(x_k) b(x_k,\mu^{\boldsymbol{x}^{N}}) -  \partial_{\mu} \alpha(\mu^{\boldsymbol{y}^{N}})(y_k)b(y_k,\mu^{\boldsymbol{y}^{N}}) \right|^2 \leq C \sum_{k = 1}^{N} |x_k-y_k|^2,
\end{equation*}
which holds due to Assumptions (H.\ref{Assum:AA}(\ref{Assum:AA4})) and (H.\ref{Assum:AA}(\ref{Assum:AA5})).
Combining above estimates, we obtain
\begin{align*}
\Pi_2 \leq L_R \left( |x_i-y_i|^2 + \frac{1}{N}\sum_{k = 1}^{N} |x_k-y_k|^2 \right).
\end{align*}
Finally, we point out that
\begin{align*}
x \mapsto  b(x,\mu) + \frac{1}{2} \sigma^{2}(x) \partial^2_{x} G(x,\mu),
\end{align*}
is Lipschitz continuous due to the choice of $\alpha$. Employing this along with (H.\ref{Assum:AAA}(\ref{Assum:AAb1})), (H.\ref{Assum:AAA}(\ref{Assum:AAA1})) and (H.\ref{Assum:AAA}(\ref{Assum:AAA2})), we derive 
\begin{align*}
\Pi_1 \leq C \left( |x_i-y_i|^2 + \frac{1}{N}\sum_{k = 1}^{N} |x_k-y_k|^2 \right),
\end{align*}
for some constant $C >0$. Taking the estimates for $\Pi_1, \ldots, \Pi_5$ into account, yields the local Lipschitz continuity of $\boldsymbol{B}_N$. 

The linear growth of $\boldsymbol{B}_N$ and $\boldsymbol{\Sigma}_N$, i.e., that there exists a constant $C>0$ such that $|\boldsymbol{B}_N(\boldsymbol{x}^{N})| + \|\boldsymbol{\Sigma}_N(\boldsymbol{x}^{N})\| \leq C(1+|\boldsymbol{x}^{N}|)$ for all $\boldsymbol{x}^{N} \in \RR^{N}$, is a direct consequence of the growth conditions on $b$ and $\sigma$ along with the bounds for the derivatives of $G$ and $\alpha$.
\end{proof}

\begin{theorem}
Let Assumption (H.\ref{Assum:AAA}) be satisfied, let $\xi \in L_p^{0}(\mathbb{R})$ for a given $p \geq 2$ and assume that the constant $c$ in \eqref{G_part} satisfies \eqref{c_ineq}.
Then, the interacting particle system defined in (\ref{eq:IPSystem2}) has a unique strong solution in $\mathcal{S}^{p}([0,T])$.
\end{theorem}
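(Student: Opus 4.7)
The plan is to carry out the same transformation-and-return strategy used in the decomposable case, but now driven by the $N$-particle map $\boldsymbol{G}_N$ whose properties were established in Lemma \ref{lem:Inverse} and Lemma \ref{lem:lip}. Concretely, I would first observe that under the assumed smallness of $c$, Lemma \ref{lem:Inverse} makes $\boldsymbol{G}_N \colon \RR^N \to \RR^N$ a global $\mathcal{C}^1$-diffeomorphism whose derivative is uniformly close to the identity, so that $\boldsymbol{G}_N$ and $\boldsymbol{G}_N^{-1}$ are both globally Lipschitz and of linear growth (on $\RR^N$ for fixed $N$). Combined with Lemma \ref{lem:lip}, this makes the composed coefficients $\boldsymbol{B}_N \circ \boldsymbol{G}_N^{-1}$ and $\boldsymbol{\Sigma}_N \circ \boldsymbol{G}_N^{-1}$ locally Lipschitz and of linear growth on $\RR^N$.

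Next I would consider the $\RR^N$-valued SDE
\begin{equation*}
\mathrm{d}\boldsymbol{Z}_t^N = \boldsymbol{B}_N\bigl(\boldsymbol{G}_N^{-1}(\boldsymbol{Z}_t^N)\bigr)\,\mathrm{d}t + \boldsymbol{\Sigma}_N\bigl(\boldsymbol{G}_N^{-1}(\boldsymbol{Z}_t^N)\bigr)\,\mathrm{d}\boldsymbol{W}_t^N, \qquad \boldsymbol{Z}_0^N = \boldsymbol{G}_N(\xi^1,\dots,\xi^N).
\end{equation*}
This is a finite-dimensional SDE with locally Lipschitz coefficients of linear growth, so classical existence and uniqueness theory (e.g.\ localisation by stopping times plus Burkholder--Davis--Gundy and Gronwall on the stopped equation) yields a unique strong solution in $\mathcal{S}^p([0,T])$, with the $p$-th moment bound following from the hypothesis $\xi \in L_p^0(\RR)$ together with the linear growth. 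Setting $\boldsymbol{X}_t^N := \boldsymbol{G}_N^{-1}(\boldsymbol{Z}_t^N)$ gives an $\mathcal{S}^p([0,T])$-process (by Lipschitz continuity of $\boldsymbol{G}_N^{-1}$), and an application of It\^{o}'s formula to $\boldsymbol{G}_N^{-1}$ — justified as in the derivation of \eqref{Drift}--\eqref{Diff} and by \cite[Theorem 3.19]{sz2016b}, using Assumption (H.\ref{Assum:AAA}(\ref{Assum:AAA2})) to substitute for the $\mathcal{C}^4$-hypersurface condition on the set of discontinuities — shows that $\boldsymbol{X}^N$ solves \eqref{eq:IPSystem2}. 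Uniqueness in the original coordinates is immediate from the bijectivity of $\boldsymbol{G}_N$: any strong solution of \eqref{eq:IPSystem2} is pushed forward by $\boldsymbol{G}_N$ to a strong solution of the transformed SDE, which is unique.

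The only genuine obstacle is the justification of It\^{o}'s formula for $\boldsymbol{G}_N$ and $\boldsymbol{G}_N^{-1}$, since the components of $\boldsymbol{G}_N$ fail to be $\mathcal{C}^2$ on the (non-smooth) set $\{\boldsymbol{x}^N : \exists j,\ x_j = 0\}$, which is not a differentiable manifold. This is why the excerpt explicitly invokes \cite[Theorem 3.19]{sz2016b} and remarks that the Lipschitz property of second-order derivatives outside the singular set is provided here by the explicit form of $G$ together with the $\mathcal{C}^{(1,2)}_b$-regularity and Lipschitz continuity of $\alpha$ and its $L$-derivatives from (H.\ref{Assum:AAA}(\ref{Assum:AAA2})); all remaining steps — the moment estimate, the global solvability on $[0,T]$, and uniqueness — are standard once this It\^{o} calculus is in place.
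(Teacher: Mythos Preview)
Your proposal is correct and follows essentially the same route as the paper: invoke Lemma~\ref{lem:Inverse} for the global invertibility of $\boldsymbol{G}_N$, use Lemma~\ref{lem:lip} to obtain local Lipschitz continuity and linear growth of the transformed coefficients, apply the classical existence and uniqueness theorem for the $\boldsymbol{Z}^N$-SDE, and then return via It\^{o}'s formula for $\boldsymbol{G}_N^{-1}$. The one place where the paper supplies more than you do is the justification that It\^{o}'s formula applies to $\boldsymbol{G}_N^{-1}$: you point to \cite[Theorem~3.19]{sz2016b}, but that was invoked for $\boldsymbol{G}_N$ itself, and it is not automatic that the inverse enjoys the same piecewise-$\mathcal{C}^2$ structure across the non-smooth set $\bigcup_k \Theta^k$; the paper handles this by introducing a class $\mathscr{C}$ of maps (Appendix~\ref{SEC:APP2}) and proving that $\boldsymbol{G}_N \in \mathscr{C}$ implies $\boldsymbol{G}_N^{-1} \in \mathscr{C}$, which then feeds back into the It\^{o} formula.
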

\begin{proof}
From Lemma \ref{lem:lip} and the linear growth of $\boldsymbol{B}_N$ and $\boldsymbol{\Sigma}_N$, we can deduce that the SDE for $\boldsymbol{Z}^{N}$ has a unique strong solution (see, \cite[Chapter 5, Theorem 2.5]{XM}). Applying now It\^{o}'s formula to $\boldsymbol{G}_N^{-1}(\boldsymbol{Z}^{N}_t)$ proves the strong uniqueness of the particle system defined in \eqref{eq:IPSystem2}. Note that $\boldsymbol{G}_N^{-1}$ exists due to Lemma \ref{lem:Inverse} and It\^{o}'s formula is applicable for $\boldsymbol{G}_N^{-1}$ as it inherits the regularity of $\boldsymbol{G}_N$ (see, Appendix \ref{SEC:APP2} for details).
\end{proof}

\section{Euler--Maruyama scheme with and without transformation}\label{sec:isolatednumerics}

In this section, we restrict most of our discussion to the case of a McKean--Vlasov SDE with decomposable drift, due to the simpler structure of the underlying transformation and the particle systems. 

In the following subsections, we will present two Euler--Maruyama schemes to discretise the particle system defined in (\ref{eq:IPSystem}) in time. 

For the first scheme (Scheme 1), we will discretise the transformed (continuous) particle system in time and then exploit the global inverse $G^{-1}$ to obtain approximations of the original (discontinuous) particle system. A slight modification of this scheme will also be applied in the non-decomposable case. An approximation result with respect to the number of particles will also be presented.

The second scheme (Scheme 2) will be defined by directly discretising the discontinuous particle system, without making use of the transformation $G$.
We give strong convergence rates in terms of the number of time-steps and pathwise strong propagation of chaos results in order to obtain quantitative $L_2$-approximations for the underlying McKean--Vlasov SDE. 

\subsection{Scheme 1: Euler--Maruyama after transformation (decomposable case)}

We define the following explicit Euler--Maruyama scheme to discretise the particle system (\ref{eq:IPSystem}) in time. In a first step, we partition a given time interval $[0,T]$ into subintervals of equal length $h=T/M$, for some integer $M>0$, and define $t_n:= nh$. Then, we simulate the transformed particle system by
\begin{equation}\label{eq:Euler}
Z_{t_{n+1}}^{i,N,M} = Z_{t_{n}}^{i,N,M} + \tilde{b}(G^{-1}(Z_{t_n}^{i,N,M}), \mu_{t_n}^{\boldsymbol{Z}^{N,M}}) h + \tilde{\sigma}(G^{-1}(Z_{t_n}^{i,N,M})) \Delta W_{n}^{i}, 
\end{equation}
for $n \in \lbrace 0, \ldots, M-1 \rbrace$, where $Z_0^{i,N,M} = G(X_0^{i,N})$, $\Delta W_{n}^{i} = W_{t_{n+1}}^{i} - W_{t_{n}}^{i}$, for $i \in \lbrace 1, \ldots, N \rbrace$, and
\begin{equation*}
\mu_{t_n}^{\boldsymbol{Z}^{N,M}}(\mathrm{d}x) := \frac{1}{N} \sum_{j=1}^{N} \delta_{G^{-1}(Z_{t_{n}}^{j,N})}(\mathrm{d}x).
\end{equation*} 
We introduce the notation $\eta(t):= \sup \lbrace s \in \lbrace 0, h, \ldots, Mh \rbrace : s \leq t \rbrace$, for $t \in [0,T]$, which allows us to define the continuous time version of (\ref{eq:Euler})
\begin{equation}\label{eq:Euler2}
Z_{t}^{i,N,M} = Z_0^{i,N,M} + \int_{0}^{t} \tilde{b}(G^{-1}(Z_{\eta(s)}^{i,N,M}), \mu_{\eta(s)}^{\boldsymbol{Z}^{N,M}}) \, \mathrm{d}s + \int_{0}^{t} \tilde{\sigma}(G^{-1}(Z_{\eta(s)}^{i,N,M})) \, \mathrm{d}W_s^{i}.
\end{equation}
Then, we propose an Euler--Maruyama approximation to $X_t^{i,N}$, for $i \in \lbrace 1, \ldots, N \rbrace$ and $t \in [0,T]$, by
\begin{eqnarray}
\label{trans_EM}
X_t^{i,N,M}=G^{-1}(Z_{t}^{i,N,M}).
\end{eqnarray}
\noindent
The convergence of this algorithm is proven in the following theorem:
\begin{theorem}
Let Assumption (H.\ref{Assum:A}) be satisfied, let $\xi \in L_p^{0}(\mathbb{R})$ for some $p > 4$ and assume $c < 1/|\alpha|$. For $i \in \lbrace 1, \ldots, N \rbrace$, let $(X_t^{i})_{t \in [0,T]}$ be the unique strong solution of (\ref{eq:Model1}) driven by the Brownian motion $(W_t^{i})_{t \in [0,T]}$ with initial data $\xi^{i}$, and $(X_t^{i,N,M})_{t \in [0,T]}$ be given by \eqref{eq:Euler2} and \eqref{trans_EM}. 
Then, there exists a constant $C>0$ (independent of $N$ and $M$) such that 
\begin{equation*}
\max_{i \in \lbrace 1, \ldots, N \rbrace}\mathbb{E}\left[\sup_{t \in [0,T]} |X_t^{i} - X_t^{i,N,M}|^2 \right] \leq C \left(h + N^{-1/2} \right).
\end{equation*}
\end{theorem}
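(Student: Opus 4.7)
The plan is to lift the problem to the transformed $Z$-coordinates where all coefficients become globally Lipschitz, and there apply a classical propagation-of-chaos argument combined with a standard Euler--Maruyama error estimate.

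First, I would introduce two auxiliary processes: $Z_t^{i} := G(X_t^{i})$, the transform of the true McKean--Vlasov solution, and $Z_t^{i,N} := G(X_t^{i,N})$, the transform of the exact particle system. Since in the decomposable setting $G$ does not depend on the measure, It\^o's formula (applicable through the regularity of $G$ recalled in Section \ref{Sec:transformProp}) yields
\[
\mathrm{d}Z^i_t = \check b(Z^i_t, \mathcal{L}_{Z^i_t})\,\mathrm{d}t + \check\sigma(Z^i_t)\,\mathrm{d}W^i_t, \qquad \mathrm{d}Z^{i,N}_t = \check b(Z^{i,N}_t, \mu^{\boldsymbol{Z}^{N}}_t)\,\mathrm{d}t + \check\sigma(Z^{i,N}_t)\,\mathrm{d}W^i_t,
\]
where $\check b(z,\nu)$ and $\check\sigma(z)$ are built by composition with $G^{-1}$ and a pushforward of $\nu$ through $G^{-1}$. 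The choice of $\alpha$ (which cancels the jump at zero), Assumption (H.\ref{Assum:A}), and the Lipschitz/bounded behaviour of $G,G',G^{-1}$ together with piecewise Lipschitz continuity of $G''$ from Section \ref{Sec:transformProp}, guarantee that $\check b$ is globally Lipschitz in $(z,\nu)$ under $\mathcal{W}_2$ and $\check\sigma$ is globally Lipschitz in $z$. Moreover, the scheme output satisfies $X_t^{i,N,M}=G^{-1}(Z_t^{i,N,M})$, where $(Z_t^{i,N,M})$ is the continuous-time Euler--Maruyama scheme associated to $\check b,\check\sigma$.

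Second, I would split the error via the Lipschitz continuity of $G^{-1}$ and the triangle inequality,
\[
\mathbb{E}\Big[\sup_{t\in[0,T]} |X_t^{i} - X_t^{i,N,M}|^2\Big] \le C\,\mathbb{E}\Big[\sup_{t\in[0,T]} |Z_t^{i} - Z_t^{i,N}|^2\Big] + C\,\mathbb{E}\Big[\sup_{t\in[0,T]} |Z_t^{i,N} - Z_t^{i,N,M}|^2\Big],
\]
and control the two pieces separately. The first term is handled by a standard propagation-of-chaos argument for Lipschitz McKean--Vlasov systems (synchronous coupling on the common Brownian motions $W^i$, BDG, and Gronwall's inequality), which reduces the task to bounding $\sup_t \mathbb{E}[\mathcal{W}_2^2(\mathcal{L}_{Z^i_t}, \mu^{\boldsymbol{Z}^{N}}_t)]$; in dimension one this is of order $N^{-1/2}$ by the Fournier--Guillin estimates, provided a moment strictly greater than $4$ is available, which is precisely where the assumption $p>4$ enters. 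The second term is a standard strong Euler--Maruyama analysis for an interacting particle system with Lipschitz coefficients: one controls $\mathbb{E}[|Z^{i,N,M}_t - Z^{i,N,M}_{\eta(t)}|^2] \le C h$ by a short-step expansion, uses exchangeability to dominate the empirical-measure discrepancy $\mathcal{W}_2^2(\mu^{\boldsymbol{Z}^{N}},\mu^{\boldsymbol{Z}^{N,M}})$ by $\max_j \mathbb{E}[|Z^{j,N}-Z^{j,N,M}|^2]$, and closes by Gronwall to obtain a bound of order $h$ uniformly in $i$ and $N$. Combining both estimates and taking the maximum over $i$ yields the announced $C(h+N^{-1/2})$ bound.

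The main technical effort, rather than a serious obstacle, lies in verifying the global Lipschitz continuity of $\check b$ and $\check\sigma$ across the removed singularity, which rests on the specific form of $\alpha$, the decomposition $b=b_1+b_2$ of (H.\ref{Assum:A}(\ref{Assum:A2})), and the piecewise regularity of $G''$ established in Section \ref{Sec:transformProp}. The moment constraint $p>4$ is not an artefact of the method but reflects the sharp rate of Wasserstein convergence of empirical measures in one dimension; the needed uniform-in-$(N,M)$ moment bounds for the scheme follow routinely from the Lipschitz structure in $Z$-coordinates together with $\xi\in L_p^0(\mathbb{R})$.
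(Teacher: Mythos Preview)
Your proposal is correct and follows essentially the same route as the paper: transform to $Z$-coordinates via $G$, use Lipschitz continuity of $G^{-1}$ to split the error into a propagation-of-chaos term (giving $N^{-1/2}$ via standard results for Lipschitz McKean--Vlasov systems, with $p>4$ needed for the empirical Wasserstein rate) and an Euler--Maruyama discretisation term for the Lipschitz particle system (giving $h$), then combine. The paper's proof is slightly more terse but structurally identical.
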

\begin{proof}
Note that
\begin{equation*}
|X_t^{i} - X_t^{i,N,M}|^2 \leq 2 |X_t^{i} - X_t^{i,N}|^2 + 2|X_t^{i,N} - X_t^{i,N,M}|^2,
\end{equation*}
and recall that the dynamics of $Z_t^{i} = G(X_t^{i})$ satisfies
\begin{equation*}
\mathrm{d}Z_t^{i} = \tilde{b}(Z_t^{i}, \tilde{\mu}_t^{Z}) \, \mathrm{d}t + \tilde{\sigma}(Z_t^{i}) \, \mathrm{d}W_t^{i},
\end{equation*} 
where $\tilde{\mu}_t^{Z} := \mathcal{L}_{G^{-1}(Z^{i}_t)}$. Therefore, we obtain for some constant $C>0$
\begin{align*}
\mathbb{E} \left[\sup_{t \in [0,T]} |X_t^{i} - X_t^{i,N}|^2 \right] &= \mathbb{E} \left[\sup_{t \in [0,T]} |G^{-1}(Z_t^{i}) - G^{-1}(Z_t^{i,N})|^2 \right] \\
& \leq L_{G^{-1}}^2 \mathbb{E} \left[\sup_{t \in [0,T]} |Z_t^{i} - Z_t^{i,N}|^2 \right] \leq C N^{-1/2},
\end{align*}
where the last inequality can be derived similarly to the propagation of chaos results for equations with Lipschitz continuous coefficients as in, e.g., \cite{RC} for $d=1$ (note that the rate $N^{-1/2}$ can be improved into $N^{-1}$ in case where $b_2(x,\mu) = \int_{\mathbb{R}} \beta(x,y) \, \mu(\mathrm{d}y)$, with $\beta$ Lipschitz continuous, see \cite{MEL}). To apply the aforementioned propagation of chaos result, we need the requirement that $\xi \in L_p^{0}(\RR)$ for $p > 4$ (see, also \cite[Theorem 5.8]{CD}). 
Furthermore, we have
\begin{align*}
\mathbb{E} \left[\sup_{t \in [0,T]} |X_t^{i,N} - X_t^{i,N,M}|^2 \right] &= \mathbb{E} \left[\sup_{t \in [0,T]} |G^{-1}(Z_t^{i,N}) - G^{-1}(Z_t^{i,N,M})|^2 \right] \\
& \leq L_{G^{-1}}^2 \mathbb{E} \left[\sup_{t \in [0,T]} |Z_t^{i,N} - Z_t^{i,N,M}|^2 \right] \leq  C h,
\end{align*}
since the SDEs for $(Z_t^{i,N})_{t \in [0,T]}$ and $(Z_t^{i,N,M})_{t \in [0,T]}$ have globally Lipschitz continuous coefficients. From these two estimates the claim follows.
\end{proof}

\subsection{Scheme 2: Euler--Maruyama without transformation (decomposable case)}
As $G$ and $G^{-1}$ may be difficult to construct in multi-dimensional settings, and since the evaluation for the inverse at each time point can be computationally expensive, it would be preferable to discretise the particle system $(X^{i,N})_{i \in \lbrace 1, \ldots, N \rbrace}$ in time directly, without the use of the transformation $G$. In addition, a drawback of Scheme 1 is that an SDE with additive diffusion term will be transformed into one with multiplicative noise and therefore the Euler--Maruyama scheme no longer coincides with the Milstein scheme. 
We employ an Euler--Maruyama scheme to discretise the particle system (\ref{eq:IPSystem}) and compute an approximate solution by
\begin{equation}\label{eq:EulerX}
X_{t}^{i,N,M} = X^{i,N}_0 + \int_{0}^{t} \left( b_1(X_{\eta(s)}^{i,N,M}) + b_2(X_{\eta(s)}^{i,N,M}, \mu_{\eta(s)}^{\boldsymbol{X}^{N,M}}) \right) \, \mathrm{d}s + \int_{0}^{t} \sigma(X_{\eta(s)}^{i,N,M}) \, \mathrm{d}W_s^{i}.
\end{equation}
The following results are concerned with moment stability of the discretised particle system and estimates for the occupation time of the particle system in the neighbourhood of the set of discontinuities. \\    
\noindent
\textbf{Moment stability:} \\ 
\noindent
We first remark that due to the linear growth of the coefficients in the state component, the Lipschitz continuity in the measure variable and the fact that all particles are identically distributed, we have the following result (see, e.g., \cite{XM} for details): 
\begin{proposition}\label{Prop:Moment}
Let Assumption (H.\ref{Assum:A}) be satisfied, and let $\xi \in L_p^{0}(\mathbb{R})$ for $p \geq 2$. Then, there exist constants $C_1, C_2>0$, such that 
\begin{equation*}
\max_{i \in \lbrace 1, \ldots, N \rbrace} \max_{n \in \lbrace 0, \ldots, M \rbrace} \mathbb{E}\left[ |X_{t_n}^{i,N,M} |^p \right] \leq C_1,
\end{equation*}
and for all $i \in \lbrace 1, \ldots, N \rbrace$ and for all $t \in [0,T]$,
\begin{equation*}
\mathbb{E} \left[ |X_{t}^{i,N,M} - X_{\eta(t)}^{i,N,M}|^p\right] \leq C_2h^{p/2}.
\end{equation*}
\end{proposition}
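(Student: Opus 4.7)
The plan is to exploit the exchangeability of the particles $(X^{i,N,M}_\cdot)_{i=1,\ldots,N}$, so that every moment estimate reduces to one for a single particle and the Wasserstein term in the drift does not inflate with $N$. I will first collect the linear-growth bounds on the coefficients, then run a Gronwall argument for the $p$-th moment on the continuous-time interpolation \eqref{eq:EulerX}, and finally deduce the one-step estimate from the moment bound by a BDG computation on the short interval $[\eta(t),t]$.

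For the linear growth: $|b_1(x)|\le C(1+|x|)$ follows from piecewise Lipschitz continuity on the two subintervals with a single jump at $0$; (H.\ref{Assum:A}(\ref{Assum:A2})) gives $|b_2(x,\mu)|\le C(1+|x|+\mathcal{W}_2(\mu,\delta_0))$; and (H.\ref{Assum:A}(\ref{Assum:A1})) gives $|\sigma(x)|\le C(1+|x|)$. The key exchangeability observation, using convexity of $r\mapsto r^{p/2}$ for $p\ge 2$, is
\begin{equation*}
\mathcal{W}_2(\mu_t^{\boldsymbol{X}^{N,M}},\delta_0)^{p}\le \Bigl(\tfrac{1}{N}\sum_{j=1}^{N}|X_t^{j,N,M}|^2\Bigr)^{p/2}\le \tfrac{1}{N}\sum_{j=1}^{N}|X_t^{j,N,M}|^{p},
\end{equation*}
which, combined with the fact that the particles are identically distributed, yields $\mathbb{E}[\mathcal{W}_2(\mu_t^{\boldsymbol{X}^{N,M}},\delta_0)^{p}]\le \mathbb{E}[|X_t^{1,N,M}|^{p}]$.

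For the moment bound I would apply BDG and H\"older to \eqref{eq:EulerX}, and combine the three linear-growth bounds with the Wasserstein estimate above to obtain
\begin{equation*}
\mathbb{E}\Bigl[\sup_{s\le t}|X_s^{i,N,M}|^{p}\Bigr]\le C\Bigl(1+\mathbb{E}[|\xi|^{p}]+\int_{0}^{t}\mathbb{E}\Bigl[\sup_{r\le s}|X_r^{i,N,M}|^{p}\Bigr]\mathrm{d}s\Bigr),
\end{equation*}
with $C$ independent of $N$ and $M$, and close via Gronwall; this proves the first claim (in fact uniformly in $t\in[0,T]$, not only on the grid). For the one-step estimate I would start from
\begin{equation*}
X_t^{i,N,M}-X_{\eta(t)}^{i,N,M}=\int_{\eta(t)}^{t}(b_1+b_2)(X_{\eta(s)}^{i,N,M},\mu_{\eta(s)}^{\boldsymbol{X}^{N,M}})\,\mathrm{d}s+\int_{\eta(t)}^{t}\sigma(X_{\eta(s)}^{i,N,M})\,\mathrm{d}W_s^{i},
\end{equation*}
bound the drift integral by H\"older (yielding an $h^{p}$ prefactor) and the stochastic integral by BDG (yielding an $h^{p/2}$ prefactor), and then insert the just-established moment bound and the exchangeability estimate on $\mathcal{W}_2(\mu_{\eta(s)}^{\boldsymbol{X}^{N,M}},\delta_0)$. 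Since $h\le T$, the $h^{p}$ term is absorbed into $h^{p/2}$, giving the claim with constant $C_2$.

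I do not anticipate a genuine obstacle; the only subtlety is that the measure-dependent part of the drift must be handled through exchangeability and the convexity bound on $\mathcal{W}_2(\mu_t^{\boldsymbol{X}^{N,M}},\delta_0)^{p}$ displayed above, otherwise the Gronwall constant would inflate with $N$ and the estimate would be useless.
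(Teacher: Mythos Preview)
Your proposal is correct and is precisely the standard argument the paper has in mind: the paper does not give a proof but merely records the key ingredients (linear growth in the state, Lipschitz continuity in the measure, and identical distribution of the particles) and refers to \cite{XM}; your sketch fills in exactly these details, including the crucial exchangeability/convexity bound on $\mathcal{W}_2(\mu_t^{\boldsymbol{X}^{N,M}},\delta_0)^{p}$ that keeps the constants independent of $N$.
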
  
\noindent
\textbf{Occupation time formula for (\ref{eq:EulerX}):} \\ 
\noindent
Below, we will show an estimate of the expected occupation time of a fixed particle of the system defined by (\ref{eq:EulerX}) in a neighbourhood of zero. 
\begin{proposition}\label{Prop:OTParticle2}
Let Assumption (H.\ref{Assum:A}) be satisfied and let $i$ be an arbitrary but fixed particle index. Further, let $\xi \in L_p^{0}(\RR)$ for $p \geq 2$ and let $(X_t^{i,N,M})_{t \in [0,T]}$ be given by (\ref{eq:EulerX}). Then, there exists a constant $C>0$ such that for all $N, M \in \mathbb{N}$ and all sufficiently small $\varepsilon>0$ 
\begin{equation*}
\int_{0}^{T} \mathbb{P} \left( \lbrace \boldsymbol{X}_{t}^{N,M} \in \Theta^{i,\varepsilon} \rbrace \right) \, \mathrm{d}t \leq C \varepsilon,
\end{equation*}
where $ \boldsymbol{X}_{t}^{N,M} = (X_{t}^{1,N,M},\ldots,X_{t}^{N,N,M})^{\top} \in \RR^{N}$ and $\Theta^{i,\varepsilon}$ is given by
\begin{equation*}
\Theta^{i,\varepsilon}:= \lbrace \boldsymbol{x}^{N} =(x_1, \ldots, x_N)^{\top} \in \RR^N: \ \exists \boldsymbol{y}^N \in \Theta^i \text{ with }  |\boldsymbol{x}^N-\boldsymbol{y}^N| < \varepsilon \rbrace,
\end{equation*}
with $\Theta^i:=\{\boldsymbol{x}^{N}=(x_1,\ldots,x_N)^{\top} \in \RR^{N} \ \colon \ x_i=0\}$.
\end{proposition}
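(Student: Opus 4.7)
Since the Euclidean distance from $\boldsymbol{x}^{N}$ to $\Theta^{i}$ equals $|x_{i}|$, the claim reduces to showing $\int_{0}^{T}\PP(|X_{t}^{i,N,M}|<\varepsilon)\,\mathrm{d}t\leq C\varepsilon$. Because $\sigma(0)\neq 0$ and $\sigma$ is Lipschitz by (H.\ref{Assum:A}(\ref{Assum:A1})), I would first fix constants $c_{0},\varepsilon_{0}>0$ with $|\sigma(x)|\geq c_{0}$ whenever $|x|\leq 2\varepsilon_{0}$, and restrict to $\varepsilon<\varepsilon_{0}$.

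The central step is an It\^{o}--Krylov / Meyer--It\^{o} application to the piecewise quadratic function
\begin{equation*}
V_{\varepsilon}(x):=\frac{x^{2}}{2}\mathbf{1}_{\{|x|\leq\varepsilon\}}+\Big(\varepsilon|x|-\frac{\varepsilon^{2}}{2}\Big)\mathbf{1}_{\{|x|>\varepsilon\}},
\end{equation*}
which is $C^{1}(\RR)$ with Lipschitz derivative $V_{\varepsilon}'$, and satisfies $V_{\varepsilon}''(x)=\mathbf{1}_{\{|x|<\varepsilon\}}$ almost everywhere together with $|V_{\varepsilon}(x)|\leq\varepsilon|x|$ and $|V_{\varepsilon}'(x)|\leq\varepsilon$. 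Because $V_{\varepsilon}'$ is continuous there is no local-time term at $\pm\varepsilon$, and using $\mathrm{d}\langle X^{i,N,M}\rangle_{s}=\sigma^{2}(X_{\eta(s)}^{i,N,M})\,\mathrm{d}s$, an application of It\^{o}'s formula followed by taking expectations (the stochastic integral is a true martingale since $|V_{\varepsilon}'|\leq\varepsilon$ and $\sigma$ has linear growth while $X^{i,N,M}$ has uniform moments by Proposition \ref{Prop:Moment}) yields, after bounding using the $V_{\varepsilon},V_{\varepsilon}'$ estimates, the linear growth of $b$ from (H.\ref{Assum:A}), and uniform moment control,
\begin{equation*}
\EE\!\left[\int_{0}^{T}\mathbf{1}_{\{|X_{s}^{i,N,M}|<\varepsilon\}}\sigma^{2}(X_{\eta(s)}^{i,N,M})\,\mathrm{d}s\right]\leq C\varepsilon.
\end{equation*}

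To pass from this $\sigma^{2}$-weighted occupation time to an unweighted one I would split
\begin{equation*}
\mathbf{1}_{\{|X_{s}^{i,N,M}|<\varepsilon\}}\leq \mathbf{1}_{\{|X_{s}^{i,N,M}|<\varepsilon,\;|X_{\eta(s)}^{i,N,M}|\leq 2\varepsilon_{0}\}}+\mathbf{1}_{\{|X_{s}^{i,N,M}-X_{\eta(s)}^{i,N,M}|>\varepsilon_{0}\}}.
\end{equation*}
On the first event $\sigma^{2}(X_{\eta(s)}^{i,N,M})\geq c_{0}^{2}$, so this piece integrates to at most $C\varepsilon/c_{0}^{2}$ via the previous display. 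For the second, Markov's inequality combined with $\EE[|X_{s}^{i,N,M}-X_{\eta(s)}^{i,N,M}|^{p}]\leq C h^{p/2}$ (Proposition \ref{Prop:Moment}) at a sufficiently large $p$ produces an $O(h^{p/2})$ contribution, which is a higher-order term absorbed into $C\varepsilon$ in the regime in which the proposition will later be invoked.

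The main obstacle I anticipate is the It\^{o}-formula step: $V_{\varepsilon}$ is only piecewise $C^{2}$, so one must verify that the local-time contributions at the kinks $\pm\varepsilon$ vanish (which holds because $V_{\varepsilon}'$ is continuous) and then appeal to the It\^{o}--Krylov extension for semimartingales against functions with essentially bounded second derivative. A secondary subtlety is ensuring all constants are uniform in $N$ and $M$; this will follow from the uniform moment bounds supplied by Proposition \ref{Prop:Moment} and the fact that $c_{0},\varepsilon_{0}$ depend only on $\sigma$.
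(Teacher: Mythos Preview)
Your reduction to one dimension via $\operatorname{dist}(\boldsymbol{x}^N,\Theta^i)=|x_i|$ and the subsequent Tanaka-type argument with the test function $V_\varepsilon$ is correct and is, in fact, precisely the mechanism underlying the result the paper invokes. The paper does not carry out this computation but instead casts the $N$-dimensional Euler process as a generic It\^o process and appeals to \cite[Theorem~2.7]{sz2016c}, checking that $\Theta^i$ is a $\mathcal{C}^3$ hypersurface of positive reach, that the normal direction is $e_i$, and that the boundedness hypothesis of that theorem can be replaced by the moment bounds of Proposition~\ref{Prop:Moment}. Your route is more elementary and self-contained; it also makes transparent why the constant is independent of $N$ (only the $i$-th coordinate process enters), whereas in the paper this emerges from the fact that the normal vector is a single unit vector.

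There is, however, one point where your argument falls short of the proposition \emph{as stated}. After the splitting step you obtain
\[
\int_0^T \PP(|X_s^{i,N,M}|<\varepsilon)\,\mathrm{d}s \;\le\; C\varepsilon + C\,h^{p/2}\varepsilon_0^{-p},
\]
and you then say the second term is ``absorbed into $C\varepsilon$ in the regime in which the proposition will later be invoked''. That is true for the application in Proposition~\ref{Prop:aux} (where $\varepsilon=h^{4/9}$, so $h^{p/2}\le h\le\varepsilon$ for $h\le 1$), but the proposition claims the bound uniformly in $M$ for \emph{all} sufficiently small $\varepsilon$, and your remainder term does not vanish with $\varepsilon$ when $h$ is held fixed. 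Your later remark that uniformity in $M$ ``will follow from the uniform moment bounds'' does not close this gap: moment bounds control the size of the remainder but do not make it $O(\varepsilon)$. To be fair, the paper's verification of the non-degeneracy hypothesis has an analogous looseness: the diffusion coefficient of the Euler It\^o process is $\sigma(X_{\eta(t)}^{i,N,M})$, not $\sigma(X_t^{i,N,M})$, so ``$X_t$ near $\Theta^i$ implies $\sigma^2(X_{\eta(t)})\ge C$'' is not literally true either, and the paper defers this to a ``close inspection'' of the proof in \cite{sz2016c}. So your argument is on the same footing; just be explicit that what you actually establish is $C(\varepsilon+h^{p/2})$, which is all that is needed downstream.
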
 
\begin{proof}
We aim to apply \cite[Theorem 2.7]{sz2016c}, which states the following: Let $(X_t)_{t \in [0,T]}$ be an $\RR^d$-valued It\^{o} process 
\begin{equation*}
X_T = X_0 + \int_{0}^{T} A_t \, \mathrm{d}t + \int_{0}^{T} B_t \, \mathrm{d}W_t,
\end{equation*}
with progressively measurable processes $A=(A_t)_{t \in [0,T]}$ and $B=(B_t)_{t \in [0,T]}$, where $A$ is $\RR^d$-valued and $B$ is $\RR^{d \times d}$-valued. The set of discontinuities, $\Theta$, is assumed to be a $\mathcal{C}^3$ hypersurface of positive reach. Namely, there exists $ \varepsilon > 0$ such that $p(x)= \argmin_{y \in \Theta} |x-y|$ is a single valued function of class $\mathcal{C}^3$ on the tubular neighbourhood $\Theta^{\varepsilon}:= \lbrace x \in \RR^{d}: \ \inf_{y \in \Theta} |x-y| < \varepsilon \rbrace$ (see Definition 2.4 in \cite{sz2016c} for details).
Then, there exists a constant $C>0$, such that for all sufficiently small $\varepsilon>0$ 
\begin{equation*}
\int_{0}^{T} \mathbb{P} \left( \lbrace X_t \in \Theta^{\varepsilon} \rbrace  \right) \, \mathrm{d}t \leq C \varepsilon,
\end{equation*}
assuming, additionally, that 
\begin{enumerate}
\item the processes $A$ and $B$ are almost surely bounded by a constant $C$ if $X_t(\omega)$ is in a small neighbourhood of $\Theta$, and
\item there exists a constant $C>0$ such that for almost all $\omega \in \Omega$, we have: If, for any $t \in [0,T]$, $X_t(\omega)$ is in a small neighbourhood of $\Theta$ then   
\begin{equation*}
n^{\top}\left(p\left( X_t(\omega) \right) \right) B_t^{\top}(\omega)B_t(\omega) n\left(p\left( X_t(\omega) \right) \right) \geq C,
\end{equation*}
where $n(x)$ has length one and is orthogonal to the tangent space of $\Theta$ in $x$.
\end{enumerate}
\noindent
We now return to our particular model problem. First, we remark that $\Theta^{i}$ satisfies all regularity conditions of \cite[Theorem 2.7]{sz2016c}, i.e., it is a $\mathcal{C}^{3}$ hypersurface of positive reach. We then observe that the $N$-dimensional particle system can be rewritten as
\begin{equation*}
\mathrm{d}\boldsymbol{X}_{t}^{N,M} = \boldsymbol{B}_N(\boldsymbol{X}_{\eta(t)}^{N,M}) \, \mathrm{d}t + \boldsymbol{\Sigma}_N(\boldsymbol{X}_{\eta(t)}^{N,M}) \, \mathrm{d}\boldsymbol{W}^{N}_t,
\end{equation*}
where $\boldsymbol{W}^{N}_t= (W^{1}_t, \ldots, W^{N}_t)^{\top}$ and $\boldsymbol{B}_N: \RR^N \to \RR^N$ and $\boldsymbol{\Sigma}_N: \RR^N \to \RR^{N \times N}$ are defined by
\begin{align*}
\boldsymbol{B}_N(\boldsymbol{x}^{N}) & = (b(x_1,\mu^{\boldsymbol{x}^{N}}),\ldots,b(x_N,\mu^{\boldsymbol{x}^{N}}))^{\top}, \\
\boldsymbol{\Sigma}_N(\boldsymbol{x}^N) & = \mbox{diag}_{N \times N}(\sigma(x_1),\ldots,\sigma(x_N)).
\end{align*}
Further, we observe that there is a constant $C>0$ such that: If, for any $t \in [0,T]$ and $\omega \in \Omega$, $\boldsymbol{X}_{t}^{N,M}(\omega)$ is in a small neighbourhood of $\Theta^{i}$ then  
\begin{equation*}
n^{\top}\left(p\left( \boldsymbol{X}_{t}^{N,M}(\omega) \right) \right) \boldsymbol{\Sigma}_N^{\top}(\boldsymbol{X}_{t}^{N,M}(\omega))\boldsymbol{\Sigma}_N(\boldsymbol{X}_{t}^{N,M}(\omega)) n\left(p\left( \boldsymbol{X}_{t}^{N,M}(\omega) \right) \right) \geq C,
\end{equation*}
as $\sigma$ is continuous and $\sigma(0) \neq 0$. Also, note that a normal vector of the tangent space of $\Theta^{i}$ is $e_i$, i.e., the $i$-th unit vector. Further, a close inspection of the proof of \cite[Theorem 2.7]{sz2016c}, shows that the boundedness assumption on the coefficients in a neighbourhood of $\Theta^{i}$ is not needed in our case, due to the moment bound of an individual particle established in Proposition \ref{Prop:Moment}. Hence,
\begin{equation*}
\int_{0}^{T} \mathbb{P} \left( \lbrace \boldsymbol{X}_{t}^{N,M} \in \Theta^{i,\varepsilon} \rbrace \right) \, \mathrm{d}t \leq C \varepsilon,
\end{equation*}
where the constant $C>0$ is independent of $N$, due to the fact that the normal vector is the $i$-th unit vector. 
\end{proof}
\noindent
\textbf{Auxiliary proposition:} \\ 
\noindent
Based on the occupation time estimate from Proposition \ref{Prop:OTParticle2}, we will prove the following result, which is needed in the proof of Theorem \ref{Prop:Conver} given below. 
\begin{proposition}\label{Prop:aux}
Let Assumption (H.\ref{Assum:A}) be satisfied, and let $\xi \in L_p^{0}(\RR)$ for $p \geq 8$. Furthermore, let $(X_t^{i,N,M})_{t \in [0,T]}$ be given by (\ref{eq:EulerX}).
Then, there exists a constant $C>0$ (independent of $N$ and $M$) such that for any $t \in [0,T]$, we have
\begin{align*}
\max_{i \in \lbrace 1, \ldots, N \rbrace} \mathbb{E}\left[ \left| \int_{0}^{t} \left( G''(X_s^{i,N,M}) - G''(X_{\eta(s)}^{i,N,M}) \right)\sigma^2(X_{\eta(s)}^{i,N,M}) \, \mathrm{d}s \right|^2 \right] \leq C h^{2/9}.
\end{align*}
\end{proposition}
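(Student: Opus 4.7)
The plan is to exploit the special form of $G''$: it is bounded and Lipschitz on each half-line $(-\infty,0)$ and $(0,\infty)$, with only a jump at the origin. Writing $Y_s := X_s^{i,N,M}$ and $\bar Y_s := X_{\eta(s)}^{i,N,M}$, I would fix a threshold $\varepsilon>0$ to be chosen later and split the integrand $F(s) := (G''(Y_s)-G''(\bar Y_s))\sigma^2(\bar Y_s)$ according to
\[
\mathbf 1_A := \mathbf 1_{\{|\bar Y_s|<\varepsilon\}},\qquad
\mathbf 1_{B_1} := \mathbf 1_{\{|\bar Y_s|\ge \varepsilon\}\cap\{Y_s \bar Y_s>0\}},\qquad
\mathbf 1_{B_2} := \mathbf 1_{\{|\bar Y_s|\ge \varepsilon\}\cap\{Y_s \bar Y_s\le 0\}},
\]
so that the target second moment is bounded (up to a constant) by the sum of the three contributions $\mathbb E[|I_A|^2]$, $\mathbb E[|I_{B_1}|^2]$, $\mathbb E[|I_{B_2}|^2]$, each of which I would treat with a different technique before optimising over $\varepsilon$.

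On $B_1$ both $Y_s$ and $\bar Y_s$ lie strictly on the same side of $0$, so the piecewise Lipschitz continuity of $G''$ (Section \ref{Sec:transformProp}) gives $|G''(Y_s)-G''(\bar Y_s)|\le L|Y_s-\bar Y_s|$. Combined with Cauchy–Schwarz in time, the linear growth of $\sigma$, the moment bound from Proposition \ref{Prop:Moment} (needing $p\ge 8$ to control $\sigma^4$ via Cauchy–Schwarz), and the one-step estimate $\mathbb E[|Y_s-\bar Y_s|^4]\le Ch^2$, I obtain $\mathbb E[|I_{B_1}|^2]\le Ch$. On $B_2$, the opposite signs together with $|\bar Y_s|\ge \varepsilon$ force $|Y_s-\bar Y_s|\ge\varepsilon$, hence $\mathbf 1_{B_2}\le |Y_s-\bar Y_s|^4/\varepsilon^4$; using $|G''|\le C$, Cauchy–Schwarz and $\mathbb E[|Y_s-\bar Y_s|^8]\le Ch^4$ from Proposition \ref{Prop:Moment} (again $p\ge 8$), I get $\mathbb E[|I_{B_2}|^2]\le Ch^2/\varepsilon^4$. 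Finally, for $I_A$ I use $|G''|\le C$ and Cauchy–Schwarz in time and space to arrive at
\[
\mathbb E[|I_A|^2]\le C\,t^{1/2}\Bigl(\int_0^t \mathbb P(|\bar Y_s|<\varepsilon)\,\mathrm ds\Bigr)^{1/2}.
\]
Since Proposition \ref{Prop:OTParticle2} is stated for the continuous-time process, I would transfer it to $\bar Y_s$ by the splitting
$\mathbb P(|\bar Y_s|<\varepsilon)\le \mathbb P(|Y_s|<2\varepsilon)+\mathbb P(|Y_s-\bar Y_s|\ge\varepsilon)$,
applying Proposition \ref{Prop:OTParticle2} to the first term (observing $\{|Y_s|<\delta\}=\{\boldsymbol X_s^{N,M}\in\Theta^{i,\delta}\}$) and Markov's inequality together with Proposition \ref{Prop:Moment} to the second, yielding $\mathbb E[|I_A|^2]\le C(\varepsilon+h^{p/2}/\varepsilon^p)^{1/2}$.

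Choosing $\varepsilon = h^{4/9}$ then balances the leading contributions: $\sqrt{\varepsilon}=h^{2/9}$ from $I_A$ and $h^2/\varepsilon^4=h^{2/9}$ from $I_{B_2}$, while the $I_{B_1}$ term is of higher order. For $p\ge 8$ the error term $h^{p/2}/\varepsilon^p = h^{p/18}$ inside the square root stays $\lesssim h^{4/9}$, so Proposition \ref{Prop:aux} follows. The main obstacle I anticipate is the $I_{B_2}$ term: controlling sign-crossings far from zero forces the polynomial loss $\varepsilon^{-4}$ against only $h^2$, and this polynomial trade-off against the square-root loss $\sqrt{\varepsilon}$ coming from Cauchy–Schwarz in $I_A$ is precisely what produces the suboptimal exponent $2/9$ (and hence the $1/9$ strong rate announced for Scheme 2). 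The technical subtlety is that passing from the continuous-time occupation-time estimate to one for $\bar Y_s$ reintroduces exactly the same $h^{p/2}/\varepsilon^p$ term appearing in $I_{B_2}$, so the choice $p=8$ is the threshold at which both terms close off with the same rate $h^{2/9}$.
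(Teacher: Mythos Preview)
Your argument is correct and follows essentially the same strategy as the paper: isolate a ``good'' region where $G''$ can be compared via its piecewise Lipschitz constant, control the remaining ``bad'' region by the occupation time estimate (Proposition~\ref{Prop:OTParticle2}) together with Markov's inequality and the increment bound from Proposition~\ref{Prop:Moment}, and then optimise with $\varepsilon=h^{4/9}$. The only noteworthy difference is that the paper decomposes according to the \emph{continuous-time} process $X_s^{i,N,M}$ (so that Proposition~\ref{Prop:OTParticle2} applies directly), whereas you split on the frozen value $X_{\eta(s)}^{i,N,M}$ and then transfer back via $\mathbb P(|\bar Y_s|<\varepsilon)\le \mathbb P(|Y_s|<2\varepsilon)+\mathbb P(|Y_s-\bar Y_s|\ge\varepsilon)$; this extra step is harmless and leads to the same rate.
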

\begin{proof}
First, observe that the linear growth of $\sigma$ and the piecewise Lipschitz continuity of $G''$ imply that there exists a constant $C>0$ such that
\begin{align*}
& \left| \left( G''(X_s^{i,N,M}) - G''(X_{\eta(s)}^{i,N,M}) \right)\sigma^2(X_{\eta(s)}^{i,N,M}) \right| \\
& \leq 
\begin{cases}
C \left(1+ (X_{\eta(s)}^{i,N,M})^2 \right) |X_s^{i,N,M} - X_{\eta(s)}^{i,N,M}|, \ X_s^{i,N,M} \notin (-\varepsilon,\varepsilon), \ |X_s^{i,N,M} - X_{\eta(s)}^{i,N,M}| < \varepsilon,  \nonumber \\
C \left(1+ (X_{\eta(s)}^{i,N,M})^2 \right), \text{ otherwise},
\end{cases}
\end{align*} 
where $\varepsilon>0$ will be specified later. With this at hand, we derive
\begin{align*}
& \mathbb{E}\left[ \left| \int_{0}^{t} \left( G''(X_s^{i,N,M}) - G''(X_{\eta(s)}^{i,N,M}) \right)\sigma^2(X_{\eta(s)}^{i,N,M}) \, \mathrm{d}s \right|^2 \right]  \\
& \leq  C \int_{0}^{t} \mathbb{E} \left[ \left| \left( G''(X_s^{i,N,M}) - G''(X_{\eta(s)}^{i,N,M}) \right)\sigma^2(X_{\eta(s)}^{i,N,M}) \right|^2 \right] \, \mathrm{d}s \\
& \leq  C \Bigg( \int_{0}^{t} \mathbb{E} \left[ \left| \left( G''(X_s^{i,N,M}) - G''(X_{\eta(s)}^{i,N,M}) \right)\sigma^2(X_{\eta(s)}^{i,N,M}) \right|^2 \left(\mathrm{I}_{\lbrace X_s^{i,N,M} \notin (-\varepsilon,\varepsilon) \rbrace } \mathrm{I}_{\lbrace |X_s^{i,N,M} - X_{\eta(s)}^{i,N,M}| < \varepsilon \rbrace } \right) \right] \, \mathrm{d}s  \\
& \quad +  \int_{0}^{t} \mathbb{E} \Big[ \left| \left( G''(X_s^{i,N,M}) - G''(X_{\eta(s)}^{i,N,M}) \right)\sigma^2(X_{\eta(s)}^{i,N,M}) \right|^2  \\
& \hspace{1.5cm}  \times \left(\mathrm{I}_{\lbrace X_s^{i,N,M} \in (-\varepsilon,\varepsilon) \rbrace } + \mathrm{I}_{\lbrace X_s^{i,N,M} \notin (-\varepsilon,\varepsilon) \rbrace } \mathrm{I}_{\lbrace |X_s^{i,N,M} - X_{\eta(s)}^{i,N,M}| \geq \varepsilon \rbrace }   \right) \Big] \, \mathrm{d}s \Bigg) \\ 
& \leq  C \Bigg( \int_{0}^{t} \mathbb{E} \left[ \left(1+ (X_{\eta(s)}^{i,N,M})^4 \right) |X_s^{i,N,M} - X_{\eta(s)}^{i,N,M}|^2 \left(\mathrm{I}_{\lbrace X_s^{i,N,M} \notin (-\varepsilon,\varepsilon) \rbrace } \mathrm{I}_{\lbrace |X_s^{i,N,M} - X_{\eta(s)}^{i,N,M}| < \varepsilon \rbrace } \right) \right] \, \mathrm{d}s  \\
& \quad + \int_{0}^{t} \mathbb{E} \left[ \left(1+ (X_{\eta(s)}^{i,N,M})^4 \right) \left(\mathrm{I}_{\lbrace X_s^{i,N,M} \in (-\varepsilon,\varepsilon) \rbrace } + \mathrm{I}_{\lbrace X_s^{i,N,M} \notin (-\varepsilon,\varepsilon) \rbrace } \mathrm{I}_{\lbrace |X_s^{i,N,M} - X_{\eta(s)}^{i,N,M}| \geq \varepsilon \rbrace }   \right) \right] \, \mathrm{d}s \Bigg) \\ 
& \leq C \left(\varepsilon^2 + \varepsilon^{1/2} + \int_{0}^{t} \left(\mathbb{P}(|X_s^{i,N,M} - X_{\eta(s)}^{i,N,M}| \geq \varepsilon) \right)^{1/2} \, \mathrm{d}s \right),
\end{align*}
where we used H\"{o}lder's inequality, Proposition \ref{Prop:Moment} and Proposition \ref{Prop:OTParticle2} in the last display. Markov's inequality along with Proposition \ref{Prop:Moment} imply that there exists a constant $C>0$ such that
\begin{equation*}
\left(\mathbb{P}(|X_s^{i,N,M} - X_{\eta(s)}^{i,N,M}| \geq \varepsilon) \right)^{1/2} \leq \frac{\left(\mathbb{E} \left[ \left|X_s^{i,N,M} - X_{\eta(s)}^{i,N,M} \right|^8 \right] \right)^{1/2}}{\varepsilon^4} \leq \frac{C h^2}{\varepsilon^4}.
\end{equation*}
Choosing $\varepsilon = h^{4/9}$ gives the result.
\end{proof}
\noindent
We are now ready to present our main convergence result. In this case, we only obtain the strong convergence rate of order $1/9$:

\begin{theorem}\label{Prop:Conver}
Let Assumption (H.\ref{Assum:A}) be satisfied, let $\xi \in L_p^{0}(\mathbb{R})$ for some $p \geq 8$ and assume $c < 1/|\alpha|$. Furthermore, let $(X_t^{i})_{t \in [0,T]}$ be the unique strong solution of (\ref{eq:Model1}) driven by the Brownian motion $(W_t^{i})_{t \in [0,T]}$with initial data $\xi^{i}$ and $(X_t^{i,N,M})_{t \in [0,T]}$ given by (\ref{eq:EulerX}). Then, there exists a constant $C>0$ (independent of $N$ and $M$) such that 
\begin{equation*}
\max_{i \in \lbrace 1, \ldots, N \rbrace} \mathbb{E}\left[\sup_{t \in [0,T]} |X_t^{i} - X_t^{i,N,M}|^2 \right] \leq C \left(h^{2/9}
+ N^{-1/2} \right).
\end{equation*}
\end{theorem}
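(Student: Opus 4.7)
The plan is to leverage the transformation $G$ to compare the discontinuous Euler scheme against the transformed, Lipschitz system, reducing most of the analysis to standard estimates and isolating the one non-standard term that requires the occupation-time machinery built up in Propositions \ref{Prop:OTParticle2} and \ref{Prop:aux}.

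First I would split the error in the usual way,
\[
|X_t^{i}-X_t^{i,N,M}|^{2}\le 2|X_t^{i}-X_t^{i,N}|^{2}+2|X_t^{i,N}-X_t^{i,N,M}|^{2},
\]
and dispatch the first piece exactly as in the Scheme~1 analysis: the transformed McKean--Vlasov SDE and its particle system have globally Lipschitz continuous coefficients, so classical propagation-of-chaos yields $\mathbb{E}[\sup_{t\le T}|X_t^{i}-X_t^{i,N}|^{2}]\le C N^{-1/2}$ (this requires $\xi\in L_p^0$ for some $p>4$, which is guaranteed by $p\ge 8$). The remainder of the proof is devoted to showing that the time-discretisation piece satisfies $\mathbb{E}[\sup_{t\le T}|X_t^{i,N}-X_t^{i,N,M}|^{2}]\le C h^{2/9}$.

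For that, I would transport the comparison through $G$: since $G^{-1}$ is globally Lipschitz, it suffices to bound $\mathbb{E}[\sup_{t\le T}|Z_t^{i,N}-G(X_t^{i,N,M})|^{2}]$, where $Z_t^{i,N}=G(X_t^{i,N})$. On the one hand, Itô's formula (valid for $G$ thanks to the regularity recalled in Section \ref{Sec:transformProp}) gives
\[
Z_t^{i,N}=G(\xi^{i})+\int_0^{t} F(X_s^{i,N},\mu_s^{\boldsymbol{X}^{N}})\,\mathrm{d}s+\int_0^{t} H(X_s^{i,N})\,\mathrm{d}W_s^{i},
\]
with $F(x,\mu):=G'(x)b(x,\mu)+\tfrac12 G''(x)\sigma^{2}(x)$ and $H(x):=G'(x)\sigma(x)$; the choice of $\alpha$ ensures that $F$ and $H$ are globally Lipschitz in $x$ (and $F$ in $\mu$ in $\mathcal{W}_2$). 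On the other hand, applying Itô to $G(X_t^{i,N,M})$, where $X^{i,N,M}$ is the piecewise-constant-coefficient Itô process \eqref{eq:EulerX}, and adding and subtracting the values at $\eta(s)$, I obtain
\[
G(X_t^{i,N,M})=G(\xi^{i})+\int_0^{t} F(X_{\eta(s)}^{i,N,M},\mu_{\eta(s)}^{\boldsymbol{X}^{N,M}})\,\mathrm{d}s+\int_0^{t} H(X_{\eta(s)}^{i,N,M})\,\mathrm{d}W_s^{i}+R_t,
\]
where the remainder $R_t$ collects three terms: $\int_0^t (G'(X_s^{i,N,M})-G'(X_{\eta(s)}^{i,N,M}))b(X_{\eta(s)}^{i,N,M},\mu_{\eta(s)}^{\boldsymbol{X}^{N,M}})\,\mathrm{d}s$, the corresponding $G'$-difference term against $\sigma\,\mathrm{d}W_s^{i}$, and $\tfrac12\int_0^t (G''(X_s^{i,N,M})-G''(X_{\eta(s)}^{i,N,M}))\sigma^{2}(X_{\eta(s)}^{i,N,M})\,\mathrm{d}s$.

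Subtracting the two representations, applying Burkholder--Davis--Gundy, using the Lipschitz continuity of $F$ and $H$, the elementary bound
\[
\mathcal{W}_{2}^{2}(\mu_s^{\boldsymbol{X}^{N}},\mu_{\eta(s)}^{\boldsymbol{X}^{N,M}})\le \frac{1}{N}\sum_{j=1}^{N}|X_s^{j,N}-X_{\eta(s)}^{j,N,M}|^{2},
\]
together with exchangeability, and $|X_s^{i,N}-X_t^{i,N,M}|\le L_{G^{-1}}|Z_s^{i,N}-G(X_s^{i,N,M})|$, leads by Gronwall's inequality to
\[
\mathbb{E}\Bigl[\sup_{t\le T}|Z_t^{i,N}-G(X_t^{i,N,M})|^{2}\Bigr]\le C\,\mathbb{E}\Bigl[\sup_{t\le T}|R_t|^{2}\Bigr]+Ch,
\]
after absorbing the one-step increment bound from Proposition \ref{Prop:Moment}. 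The first two contributions to $R_t$ are controlled in $L^{2}$ by $O(h)$ using the Lipschitz continuity of $G'$, the linear growth of $b$ and $\sigma$, and the $L^{p}$-increment bound $\mathbb{E}[|X_s^{i,N,M}-X_{\eta(s)}^{i,N,M}|^{p}]\le Ch^{p/2}$.

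The main obstacle, and the step that fixes the suboptimal rate, is the $G''$-contribution to $R_t$: because $G''$ is only \emph{piecewise} Lipschitz and jumps at zero, we cannot pull out a factor of $|X_s^{i,N,M}-X_{\eta(s)}^{i,N,M}|$ globally. This is precisely the term handled by Proposition \ref{Prop:aux}, which bounds its $L^{2}$-norm by $Ch^{2/9}$ via an occupation-time argument (Proposition \ref{Prop:OTParticle2}) that controls the probability of a given particle being within $\varepsilon$ of the singular set, combined with a Markov-type tail bound balanced at $\varepsilon=h^{4/9}$. Inserting this into the Gronwall estimate above yields $\mathbb{E}[\sup_{t\le T}|X_t^{i,N}-X_t^{i,N,M}|^{2}]\le Ch^{2/9}$, and combining with the propagation-of-chaos bound completes the proof.
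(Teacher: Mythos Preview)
Your proof is correct and follows essentially the same strategy as the paper: transport the comparison through $G$, apply It\^{o}'s formula to $G(X_t^{i,N,M})$, isolate the $G''$-increment term, and invoke Proposition~\ref{Prop:aux} for the $h^{2/9}$ rate. The only minor difference is that the paper inserts the Scheme~1 Euler process $Z_t^{i,N,M}$ as an intermediate step---splitting $|Z_t^{i,N}-G(X_t^{i,N,M})|$ into $|Z_t^{i,N}-Z_t^{i,N,M}|$ (standard Lipschitz Euler rate $O(h^{1/2})$) and $|Z_t^{i,N,M}-G(X_t^{i,N,M})|$---whereas you compare $Z_t^{i,N}$ and $G(X_t^{i,N,M})$ directly and close the Gronwall loop via $|X_s^{i,N}-X_s^{i,N,M}|\le L_{G^{-1}}|Z_s^{i,N}-G(X_s^{i,N,M})|$; both routes arrive at the same estimate with the same key input.
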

\begin{proof}
Note that 
\begin{align}\label{eq:est2}
\mathbb{E}\left[\sup_{t \in [0,T]} |X_t^{i} - X_t^{i,N} |^2 \right] & \leq L_{G^{-1}} \mathbb{E} \left[  \sup_{t \in [0,T]} |G( X_t^{i}) - G(X_t^{i,N})|^2 \right] \nonumber \\ 
& \leq C N^{-1/2},
\end{align}
where in the last display, we used the pathwise propagation of chaos result as in the previous subsection.
Further, we have
\begin{align*}
\mathbb{E} \left[ \sup_{t \in [0,T]} |X_t^{i,N}-X_t^{i,N,M}|^2 \right] &\leq L_{G^{-1}} \mathbb{E} \left[  \sup_{t \in [0,T]} |Z_t^{i,N} - G(X_t^{i,N,M})|^2 \right] \\
& \leq C \left(  \mathbb{E} \left[  \sup_{t \in [0,T]} |Z_t^{i,N}-Z_t^{i,N,M}|^2 \right] +  \mathbb{E} \left[  \sup_{t \in [0,T]} |Z_t^{i,N,M} - G(X_t^{i,N,M})|^2 \right]  \right) \\
& \leq C \left( h + \mathbb{E} \left[  \sup_{t \in [0,T]} |Z_t^{i,N,M} - G(X_t^{i,N,M})|^2 \right]  \right),
\end{align*}
where in the last estimate, we employed standard strong convergence results for the Euler--Maruyama scheme applied to SDEs with globally Lipschitz continuous coefficients. 
Following similar arguments to \cite{sz2016c} or \cite{MGLY19}, one further obtains, by applying It\^{o}'s formula to $G(X_t^{i,N,M})$,
\begin{align*}
\mathbb{E} \left[  \sup_{t \in [0,T]} |Z_t^{i,N,M} - G(X_t^{i,N,M})|^2 \right] &\leq C \Bigg(h +  \int_{0}^{T} \mathbb{E}\left[ \left| \left( G''(X_s^{i,N,M}) - G''(X_{\eta(s)}^{i,N,M}) \right)\sigma^2(X_{\eta(s)}^{i,N,M}) \right|^2 \right]  \, \mathrm{d}s \\
& \qquad + \int_{0}^{T} \mathbb{E} \left[  \sup_{s \in [0,t]} |Z_s^{i,N,M} - G(X_s^{i,N,M})|^2 \right] \, \mathrm{d}t \Bigg),
\end{align*}
where the second summand on the right side is of order $h^{2/9}$ due to Proposition \ref{Prop:aux}. Hence, Gronwall's inequality yields 
\begin{equation}\label{eq:est1}
\mathbb{E} \left[ \sup_{t \in [0,T]} |X_t^{i,N}-X_t^{i,N,M}|^2 \right] \leq Ch^{2/9},
\end{equation}
and the claim follows combining (\ref{eq:est2}) and (\ref{eq:est1}).
\end{proof}
\noindent 
\begin{remark}
The convergence rate in terms of the number of particles in the above theorem can again be improved to $1/2$ if the drift has the form \eqref{eq:LIN}, see \cite{MEL}. 

The convergence rate in terms of number of time-steps established in Theorem \ref{Prop:Conver} could be improved by employing exponential tail estimate techniques, as in \cite{sz2016c}. The resulting strong convergence rate would be $1/4-\varepsilon$, for an arbitrarily small $\varepsilon>0$. However, to achieve this, one would need to assume boundedness of the coefficients in equation (\ref{eq:Model1}). Another possibility to recover a better convergence rate in our setting would be to require that the initial data $\xi \in L_p^{0}(\RR)$ for any $p \geq 2$ and that $\sigma$ is uniformly bounded. This would enable us to obtain sharper estimates, when employing Markov's inequality in the proof of Proposition \ref{Prop:aux}. If we assume moment boundedness of the initial data of any order, but allow $\sigma$ to grow-linearly, we would obtain a rate of order $1/8-\varepsilon$.

Moreover, although we expect
that the optimal convergence rate of the Euler--Maruyama scheme applied to the interacting particle system is $1/2$ (as for equations with Lipschitz coefficients), we only achieve the order $1/9$ (or $1/4-\varepsilon$ under stronger assumptions on the initial data or the coefficients of the underlying McKean--Vlasov SDE), due to the estimate of the probability that $X_{\eta(s)}^{i,N,M}$ and $X_s^{i,N,M}$ have a different sign, i.e., that the term $|G''(X_s^{i,N,M}) - G''(X_{\eta(s)}^{i,N,M})|$ in the proof of Proposition \ref{Prop:aux} does not allow a Lipschitz type estimate. Refined estimates of the aforementioned expected sign change, as derived in \cite{MGLY19} for one-dimensional SDEs, are not easy to prove for an interacting particle system. The proof of the main result in \cite{MGLY19} is not applicable to our setting as an individual particle (seen as a one-dimensional equation) does not satisfy Markov properties (due to the dependence of interaction terms), which are key in \cite{MGLY19}.    
\end{remark}

\subsection{Scheme 1 for the non-decomposable case}

Here, we first prove a propagation of chaos result in the case of non-decomposable drifts in Lemma \ref{PCLem}. The time-discretisation error is then given in Theorem \ref{TH:E3}. 
\begin{lemma}\label{PCLem}
Let Assumption (H.\ref{Assum:AAA}) hold, let $\xi \in L_p^{0}(\RR)$ for $p > 4$ and assume the mapping $\mathbb{R} \setminus \lbrace{ 0 \rbrace} \times \mathcal{P}_2(\RR) \ni (x,\mu) \mapsto b(x,\mu)$ to be uniformly bounded. Let $(X_t^{i})_{t \in [0,T]}$ be the unique strong solution of (\ref{eq:Model2}) driven by the Brownian motion $(W_t^{i})_{t \in [0,T]}$ with initial data $\xi^{i}$, and $(X_t^{i,N})_{t \in [0,T]}$ is the solution to the associated particle system. Then, there exists a constant $C>0$ (independent of $N$) such that 
\begin{equation*}
\max_{i \in \lbrace 1, \ldots, N \rbrace}\mathbb{E}\left[\sup_{t \in [0,T]} |X_t^{i} - X_t^{i,N}|^2 \right] \leq C N^{-1/2}.
\end{equation*}
\end{lemma}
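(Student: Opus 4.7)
The plan is to work in the transformed coordinates where, thanks to Lemma~\ref{lem:Lip}, the McKean--Vlasov SDE has globally Lipschitz continuous coefficients, and then to transfer the resulting bound back via the Lipschitz inverse $G^{-1}$ of Proposition~\ref{Prop}. Writing $\mu_t^i:=\mathcal{L}_{X_t^i}$, I would introduce the two transformed processes $Z_t^i:=G(X_t^i,\mu_t^i)$, which satisfies the decoupled SDE \eqref{eq:Transf} driven by $W^i$, and $\tilde Z_t^{i,N}:=G(X_t^{i,N},\mu_t^{\boldsymbol X^N})$, whose dynamics follow from the empirical--measure It\^o formula recalled in Section~\ref{Sec:IAP} with coefficients $B_i$ and $(\Sigma^{i,j})_j$ as in \eqref{Drift}--\eqref{Diff}.

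The first technical step is to compare these particle coefficients with $\tilde b,\tilde\sigma$ evaluated at $(G(x_i,\mu^{\boldsymbol x^N}),\mu^{\boldsymbol x^N})$. When $\mu=\mu^{\boldsymbol x^N}$, the integrals defining the operator $\mathcal{L}_\mu$ in \eqref{eq:TransfCoeff} collapse exactly into the discrete averages $\frac{1}{N}\sum_k\partial_\mu G\cdot b$ and $\frac{1}{2N}\sum_k\partial_y\partial_\mu G\cdot\sigma^2$ appearing in $B_i$, so $B_i(\boldsymbol x^N)-\tilde b(G(x_i,\mu^{\boldsymbol x^N}),\mu^{\boldsymbol x^N})$ reduces to the residual $\frac{1}{2N^2}\sum_k\partial^2_\mu G\cdot\sigma^2+\frac{1}{N}\partial_{x_i}\partial_\mu G\cdot\sigma^2$. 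Under (H.\ref{Assum:AAA}(\ref{Assum:AAA2})) and the boundedness of $b,\sigma$ this residual is uniformly $O(1/N)$. An analogous inspection shows $\Sigma^{i,i}=\tilde\sigma(G(x_i,\mu^{\boldsymbol x^N}),\mu^{\boldsymbol x^N})+O(1/N)$ and $|\Sigma^{i,j}|=O(1/N)$ for $j\ne i$, so the cross-Brownian contribution $M_t^{i,N}:=\sum_{j\ne i}\int_0^t\Sigma^{i,j}\,dW^j$ (together with the $O(1/N)$ diagonal discrepancy) has quadratic variation bounded by $CT/N$, whence BDG gives $\mathbb{E}\sup_{t\le T}|M_t^{i,N}|^2\le C/N$.

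With these comparisons in place I would subtract the SDEs for $Z^i$ and $\tilde Z^{i,N}$ and invoke Lemma~\ref{lem:Lip} together with BDG and H\"older's inequality to obtain
\[
\mathbb{E}\Big[\sup_{s\le t}|Z_s^i-\tilde Z_s^{i,N}|^2\Big]\le C\!\int_0^t \mathbb{E}\Big[\sup_{r\le s}|Z_r^i-\tilde Z_r^{i,N}|^2\Big]\,ds+C\!\int_0^t \mathbb{E}\big[\mathcal{W}_2^2(\mu_s^i,\mu_s^{\boldsymbol X^N})\big]\,ds+\frac{C}{N}.
\]
The Wasserstein term is handled by a synchronous coupling with i.i.d.\ copies $(X^j)$: splitting $\mathcal{W}_2^2(\mu_s^i,\mu_s^{\boldsymbol X^N})\le 2\mathcal{W}_2^2\big(\mu_s^i,\tfrac{1}{N}\sum_j\delta_{X_s^j}\big)+\tfrac{2}{N}\sum_j|X_s^j-X_s^{j,N}|^2$, the first summand has expectation $O(N^{-1/2})$ by the one-dimensional Fournier--Guillin rate (this is where the hypothesis $p>4$ enters), and the second equals $2\,\mathbb{E}|X_s^i-X_s^{i,N}|^2$ by exchangeability. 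Using $(a+b)^2\le 8a^2+2b^2$ in Proposition~\ref{Prop} with $L(c)<1/2$ (Remark~\ref{REM:LIP}) yields $\mathbb{E}|X_s^i-X_s^{i,N}|^2\le 8\,\mathbb{E}|Z_s^i-\tilde Z_s^{i,N}|^2+2L(c)^2\,\mathbb{E}\mathcal{W}_2^2(\mu_s^i,\mu_s^{\boldsymbol X^N})$, so that the $\mathcal{W}_2^2$ term can be absorbed on the left-hand side. A Gronwall argument applied to $t\mapsto\max_i\mathbb{E}[\sup_{s\le t}|Z_s^i-\tilde Z_s^{i,N}|^2]$ together with one final use of Proposition~\ref{Prop} then delivers the announced $O(N^{-1/2})$ bound.

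I expect the main obstacle to be the coefficient comparison of the second paragraph: one must carefully isolate which It\^o terms produced by the empirical-measure formula \emph{exactly} reconstruct $\tilde b$ and $\tilde\sigma$, and which are genuine $O(1/N)$ remainders. The assumed uniform boundedness of $b$, combined with the bounds on the Lions derivatives of $\alpha$ from (H.\ref{Assum:AAA}(\ref{Assum:AAA2})), is exactly what renders these remainders uniform in the particle positions; without this hypothesis a localisation together with uniform moment bounds for the particle system would be required. Once the coefficient comparison and the absorption of the Wasserstein term through $L(c)<1/2$ are in place, the remainder is a textbook Lipschitz propagation-of-chaos estimate.
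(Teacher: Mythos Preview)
Your proposal is correct and follows essentially the same route as the paper: both compare $G(X^i_t,\mu_t)$ with $G(X^{i,N}_t,\mu^{\boldsymbol X^N}_t)$ via It\^o's formula, absorb the Wasserstein discrepancy through the smallness of $L(c)$ in Proposition~\ref{Prop}, invoke the one-dimensional $N^{-1/2}$ empirical-measure rate, and close by Gronwall. The only difference is organisational---you package the particle It\^o expansion as $\tilde b,\tilde\sigma$ evaluated at the empirical measure plus explicit $O(1/N)$ residuals and then appeal to Lemma~\ref{lem:Lip}, whereas the paper writes the difference out as seven terms $\Pi_1,\ldots,\Pi_7$ (your residuals are their $\Pi_6,\Pi_7$, your cross-Brownian $M^{i,N}$ is their $\Pi_5$) and re-derives the relevant Lipschitz estimates inline.
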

\begin{proof}
First, we observe, using the definitions $\mu_t = \mathcal{L}_{X_t}$, $\mu^{N}_t(\mathrm{d}x) = \frac{1}{N} \sum_{j=1}^{N} \delta_{X_t^{j}}(\mathrm{d}x)$, and $\mu_t^{\boldsymbol{X}^{N},N-1}(\mathrm{d}x) = \frac{1}{N-1} \sum_{j \neq i} \delta_{X_t^{j,N}}(\mathrm{d}x)$, that 
\begin{align}\label{a1}
\mathbb{E} \left[\sup_{t \in [0,T]} |X_t^{i} - X_t^{i,N}|^2 \right] &= \mathbb{E} \left[\sup_{t \in [0,T]} |G^{-1}(G(X^{i}_t,\mu_t),\mu_t) - G^{-1}(G(X_t^{i,N},\mu_t^{\boldsymbol{X}^{N},N-1}),\mu_t^{\boldsymbol{X}^{N},N-1})|^2 \right] \notag \\
& \leq  C \Bigg( \mathbb{E} \left[\sup_{t \in [0,T]} |G(X^{i}_t,\mu_t)- G(X_t^{i,N},\mu_t^{\boldsymbol{X}^{N},N-1})|^2 \right] \notag \\
& \quad + \sup_{0 \leq t \leq T} L(c)\left( \mathcal{W}^{2}_2(\mu_t,\mu^{N}_t) +   \mathcal{W}^{2}_2(\mu^{N}_t,\mu_t^{\boldsymbol{X}^{N},N-1})  \right) \Bigg),
\end{align} 
where $L(c) \to 0$ as $c \to 0$ (see Proposition \ref{Prop}). Furthermore, \cite[Theorem 5.8]{CD} implies that $\mathcal{W}^{2}_2(\mu_t,\mu^{N}_t) \leq CN^{-1/2}$ and in addition, by triangle inequality, we deduce 
\begin{align}\label{a2}
L(c) \mathcal{W}^{2}_2(\mu^{N}_t,\mu_t^{\boldsymbol{X}^{N},N-1})  & \leq 2L(c) \left( \mathbb{E} \left[\sup_{t \in [0,T]} |X_t^{i} - X_t^{i,N}|^2 \right] +   \mathcal{W}^{2}_2(\mu_t^{\boldsymbol{X}^{N}},\mu_t^{\boldsymbol{X}^{N},N-1}) \right) \notag \\
&  \leq 2L(c) \mathbb{E} \left[\sup_{t \in [0,T]} |X_t^{i} - X_t^{i,N}|^2 \right] + CN^{-1/2},  
\end{align}
where we used $\mathcal{W}^{2}_2(\mu_t^{\boldsymbol{X}^{N}},\mu_t^{\boldsymbol{X}^{N},N-1}) \leq C N^{-1}$, which follows from \cite[Lemma 3.1]{LSAT}.

To summarise, taking \eqref{a1} and \eqref{a2} into account, we obtain 
\begin{align*}
\mathbb{E} \left[\sup_{t \in [0,T]} |X_t^{i} - X_t^{i,N}|^2 \right] & \leq C \Bigg( \mathbb{E} \left[\sup_{t \in [0,T]} |G(X^{i}_t,\mu_t)- G(X_t^{i,N},\mu_t^{\boldsymbol{X}^{N}})|^2 \right]  \\
& \quad + \mathbb{E} \left[\sup_{t \in [0,T]} |G(X_t^{i,N},\mu_t^{\boldsymbol{X}^{N}})- G(X_t^{i,N},\mu_t^{\boldsymbol{X}^{N},N-1})|^2 \right] + N^{-1/2} \Bigg).
\end{align*}
A similar analysis to above can be employed to handle the second term and hence, we arrive at
\begin{align}\label{a3}
\mathbb{E} \left[\sup_{t \in [0,T]} |X_t^{i} - X_t^{i,N}|^2 \right] & \leq   C \left(\mathbb{E} \left[\sup_{t \in [0,T]} |G(X^{i}_t,\mu_t)- G(X_t^{i,N},\mu_t^{\boldsymbol{X}^{N}})|^2 \right] + N^{-1/2} \right).
\end{align}
To further estimate \eqref{a3}, we apply It\^{o}'s formula to derive
\allowdisplaybreaks
\begin{align*}
& G(X^{i}_t,\mu_t)- G(X_t^{i,N},\mu_t^{\boldsymbol{X}^{N}})  \\
& = G(X^{i}_0,\mu_0)- G(X_0^{i,N},\mu_0^{\boldsymbol{X}^{N}}) \\
& \quad +  \int_{0}^{t} \left(b(X^{i}_s,\mu_s) + \alpha({\mu_s}) \bar{\phi}'(X^{i}_s) b(X^{i}_s,\mu_s) + \frac{1}{2} \alpha({\mu_s})  \bar{\phi}''(X^{i}_s) \sigma^2(X^{i}_s) \right) \, \mathrm{d}s  \\
& \quad  - \int_{0}^{t} \left(b(X^{i,N}_s,\mu_s^{\boldsymbol{X}^{N}}) + \alpha(\mu_s^{\boldsymbol{X}^{N}}) \bar{\phi}'(X^{i,N}_s) b(X^{i,N}_s,\mu_s^{\boldsymbol{X}^{N}}) + \frac{1}{2} \alpha(\mu_s^{\boldsymbol{X}^{N}})  \bar{\phi}''(X^{i,N}_s) \sigma^2(X^{i,N}_s) \right) \, \mathrm{d}s  \\
& \quad + \int_{0}^{t} \left(\sigma(X^{i}_s) + \alpha({\mu_s}) \bar{\phi}'(X^{i}_s)\sigma(X^{i}_s) - \sigma(X^{i,N}_s) - \alpha(\mu_s^{\boldsymbol{X}^{N}}) \bar{\phi}'(X^{i,N}_s)\sigma(X^{i,N}_s) \right) \, \mathrm{d}W^{i}_s \\
& \quad + \int_{0}^{t} \Big(  \partial_s G(X^{i}_s,\mu_s) -  \frac{1}{2N} \sum_{k =1}^{N} \partial_y \partial_{\mu}G(X_s^{i,N},\mu_s^{\boldsymbol{X}^{N}})(X_s^{k,N})  \sigma^2(X^{k,N}_s)  \\
& \hspace{2cm} -  \frac{1}{N} \sum_{k =1}^{N} \partial_{\mu} G(X_s^{i,N},\mu_s^{\boldsymbol{X}^{N}})(X_s^{k,N}) b(X^{k,N}_s,\mu_s^{\boldsymbol{X}^{N}})  \Big) \, \mathrm{d}s \\
& \quad - \int_{0}^{t} \frac{1}{N} \sum_{k =1}^{N} \partial_{\mu} G(X_s^{i,N},\mu_s^{\boldsymbol{X}^{N}})(X_s^{k,N}) \sigma(X^{k,N}_s) \, \mathrm{d}W_s^{k}  \\
& \quad - \int_{0}^{t} \frac{1}{2N^{2}} \sum_{k=1}^{N}\partial^2_{\mu}G(X_s^{i,N},\mu_s^{\boldsymbol{X}^{N}})(X_s^{k,N},X_s^{k,N}) \sigma^2(X_s^{k,N}) \, \mathrm{d}s  \\
& \quad - \int_{0}^{t} \frac{1}{N} \partial_{x_i}  \partial_{\mu}G(X_s^{i,N},\mu_s^{\boldsymbol{X}^{N}})(X_s^{i,N})\sigma^2(X_s^{i,N}) \, \mathrm{d}s =: \sum_{i=1}^{7} \Pi_i. \nonumber
\end{align*}
It is clear due to (H.\ref{Assum:AAA}(\ref{Assum:AAA2})) that $\mathbb{E}[|\Pi_6|^2] + \mathbb{E}[|\Pi_7|^2] \leq CN^{-1}$. For the term $\Pi_5$ we derive, using BDG's inequality,
\begin{align*}
\mathbb{E}[|\Pi_5|^2] & \leq   \frac{C}{N} \Bigg( \sum_{k=1}^{N} \mathbb{E}\left[ \int_{0}^{t}\left( \partial_{\mu} G(X_s^{i,N},\mu_s^{\boldsymbol{X}^{N}})(X_s^{k,N}) \sigma(X^{k,N}_s) - \partial_{\mu}\alpha(\mu_s)(X_s^{k})\bar{\phi}(X^{k}_s) \sigma(X^{k}_s) \right)^2 \, \mathrm{d}s \right] \\
& \quad + \frac{1}{N} \sum_{k,l=1}^{N} \mathbb{E}\left[ \left(\int_{0}^{t}\partial_{\mu}\alpha(\mu_s)(X_s^{k})\bar{\phi}(X^{k}_s) \sigma(X^{k}_s) \, \mathrm{d}W_s^{k} \right)  \left(\int_{0}^{t}  \partial_{\mu}\alpha(\mu_s)(X_s^{l})\bar{\phi}(X^{l}_s)  \sigma(X^{l}_s) \, \mathrm{d}W_s^{l} \right) \right] \Bigg).
\end{align*}
Therefore, taking the Lipschitz continuity of $(x,\mu) \mapsto  \partial_{\mu}\alpha(\mu)(x) \bar{\phi}(x) \sigma(x)$ and the independence of $(X_t^{k})_{k \in \lbrace 1, \ldots, N \rbrace}$, for $t \in [0,T]$, into account and using $\mathcal{W}_2(\mu_s^{\boldsymbol{X}^{N}},\mu_s) \leq \mathcal{W}_2(\mu_s^{\boldsymbol{X}^{N}},\mu^{N}_s) + \mathcal{W}_2(\mu_s^N,\mu_s)$ along with \cite[Theorem 5.8]{CD}, we obtain
\begin{align*}
\mathbb{E}[|\Pi_5|^2] & \leq C \left( \int_{0}^{T} \mathbb{E} \left[\sup_{s \in [0,t]} |X_s^{i} - X_s^{i,N}|^2 \right] \, \mathrm{d}t + N^{-1/2} \right).
\end{align*}

Similar to Lemma \ref{lem:Lip} combined with BDG's inequality and H\"{o}lder's inequality, we deduce
\begin{align*}
\mathbb{E}[|\Pi_1|^2]+  \mathbb{E}[|\Pi_2|^2]+ \mathbb{E}[|\Pi_3|^2] + \mathbb{E}[|\Pi_4|^2] & \leq C \left( \int_{0}^{T} \mathbb{E} \left[\sup_{s \in [0,t]} |X_s^{i} - X_s^{i,N}|^2 \right] \, \mathrm{d}t + N^{-1/2} \right).
\end{align*}
Therefore, inserting the estimates for $\mathbb{E}[|\Pi_1|^2], \ldots, \mathbb{E}[|\Pi_7|^2]$ back into \eqref{a3}, we deduce the claim using Gronwall's inequality. 
\end{proof}

The following hybrid explicit-implicit time-stepping algorithm computes a discrete-time approximation of $(X_t^{i,N})_{t \in [0,T]}$, denoted by $X_{t_n}^{i,N,M}$ for $n \in \lbrace 0, \ldots, M \rbrace$ and $i \in \lbrace 1, \ldots, N \rbrace$: 
\begin{itemize}
\item  Set $\tilde{X}^{i,N,M}_{t_0} = G(X^{i,N}_0,\mu_0^{\boldsymbol{X}^{N}})$ and $X^{i,N,M}_{t_0} = X^{i,N}_0= \xi^{i}$. 
\item  For $n \geq 1$, compute 
\begin{equation*}
\tilde{X}^{i,N,M}_{t_n} = \tilde{X}^{i,N,M}_{t_{n-1}} + B_i(\tilde{X}^{1,N,M}_{t_{n-1}}, \ldots, \tilde{X}^{N,N,M}_{t_{n-1}}) h  + \sum_{j=1}^{N}\Sigma^{i,j}(\tilde{X}^{1,N,M}_{t_{n-1}}, \ldots, \tilde{X}^{N,N,M}_{t_{n-1}}) \Delta W^{j}_n,
\end{equation*}
where $B_i$ and $\Sigma^{i,j}$ are defined by (\ref{Drift}) and (\ref{Diff}), respectively.
\item  Find $X^{i,N,M}_{t_n}$ such that $X^{i,N,M}_{t_n} = G^{-1}(\tilde{X}^{i,N,M}_{t_n},\mu_{t_n}^{\boldsymbol{X}^{N,M}})$, with $\mu_{t_n}^{\boldsymbol{X}^{N,M}}(\mathrm{d}x) = \frac{1}{N} \sum_{j=1}^{N} \delta_{X^{j,N,M}_{t_n}}(\mathrm{d}x)$.
\end{itemize} 

\begin{remark}
The implicit function theorem applied to the function 
\begin{equation*}
\boldsymbol{F}_N(\boldsymbol{x}^{N},\boldsymbol{y}^{N}) = \boldsymbol{y}^{N}- \left(G^{-1}(x_1,\mu^{\boldsymbol{y}^{N}}), \ldots, G^{-1}(x_N,\mu^{\boldsymbol{y}^{N}}) \right)^{\top}, \quad \boldsymbol{x}^{N}, \boldsymbol{y}^{N} \in \RR^{N},
\end{equation*}
implies that we can express $\boldsymbol{y}^{N}$ in terms of $\boldsymbol{x}^{N}$. The applicability of the implicit function theorem follows from similar arguments to the ones presented in Lemma \ref{lem:Inverse} along with Proposition \ref{Prop:LD}.
\end{remark}

\begin{remark}
We could also define an explicit scheme by setting $X^{i,N,M}_{t_n} = G^{-1}(\tilde{X}^{i,N,M}_{t_n},\mu_{t_n}^{\tilde{\boldsymbol{X}}^{N,M}})$, with $\mu_{t_n}^{\tilde{\boldsymbol{X}}^{N,M}}(\mathrm{d}x) = \frac{1}{N} \sum_{j=1}^{N} \delta_{\tilde{X}^{j,N,M}_{t_n}}(\mathrm{d}x)$. However, to derive a strong convergence rate for the resulting scheme, one has to analyse the quantity $\mathbb{E}[|X^{i,N,M}_{t_n}-\tilde{X}^{i,N,M}_{t_n}|^2]$. Similar arguments as for Scheme 2 could possibly be used here, but our current analysis does not allow us to derive an optimal convergence rate in $h$. 
\end{remark}

\begin{theorem}\label{TH:E3}
Let Assumption (H.\ref{Assum:AAA}) hold, let $\xi \in L_p^{0}(\RR)$ for $p > 4$ and assume the mapping $\mathbb{R} \setminus \lbrace 0 \rbrace \times \mathcal{P}_2(\RR) \ni (x,\mu) \mapsto b(x,\mu)$ to be uniformly bounded. Let $(X_t^{i})_{t \in [0,T]}$ be the unique strong solution of (\ref{eq:Model2}) driven by the Brownian motion $(W_t^{i})_{t \in [0,T]}$ with initial data $\xi^{i}$, and $X_{t_n}^{i,N,M}$ for  $n \in \lbrace 0, \ldots, M\rbrace$ be defined by the above algorithm. Then, there exists a constant $C>0$ (independent of $N$ and $M$) such that 
\begin{equation*}
\max_{i \in \lbrace 1, \ldots, N \rbrace} \max_{n \in \lbrace 0, \ldots, M \rbrace} \mathbb{E}\left[ |X_{t_n}^{i} - X_{t_n}^{i,N,M}|^2 \right] \leq C (N^{-1/2} + h).
\end{equation*}
\end{theorem}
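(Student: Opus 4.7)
The plan is to split the error via the triangle inequality into a propagation-of-chaos term and a time-discretisation term,
\[
\mathbb{E}\bigl[|X_{t_n}^{i} - X_{t_n}^{i,N,M}|^2\bigr] \le 2\,\mathbb{E}\bigl[|X_{t_n}^{i} - X_{t_n}^{i,N}|^2\bigr] + 2\,\mathbb{E}\bigl[|X_{t_n}^{i,N} - X_{t_n}^{i,N,M}|^2\bigr],
\]
and to bound the first summand by $CN^{-1/2}$ by a direct appeal to Lemma \ref{PCLem}, whose hypotheses are already contained in those of the theorem. The remainder of the argument is dedicated to showing that the time-discretisation error is $\mathcal{O}(h)$.

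For that, introduce $Y_{t}^{i,N} := G\bigl(X_{t}^{i,N},\mu_{t}^{\boldsymbol{X}^{N}}\bigr)$. By the It\^o expansion preceding \eqref{Drift}, the $N$-dimensional process $(\boldsymbol{Y}_t^{N})_t$ solves a classical SDE with drift $\boldsymbol{B}_N$ and diffusion $\boldsymbol{\Sigma}_N$, and $\tilde{X}_{t_n}^{i,N,M}$ is by construction its explicit Euler--Maruyama approximation, while the algorithm returns $X_{t_n}^{i,N,M}=G^{-1}(\tilde{X}_{t_n}^{i,N,M},\mu_{t_n}^{\boldsymbol{X}^{N,M}})$. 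Since moreover $X_{t_n}^{i,N}=G^{-1}(Y_{t_n}^{i,N},\mu_{t_n}^{\boldsymbol{X}^{N}})$, Proposition \ref{Prop} after squaring combined with the elementary bound
\[
\mathcal{W}_2^{2}\bigl(\mu_{t_n}^{\boldsymbol{X}^{N}},\mu_{t_n}^{\boldsymbol{X}^{N,M}}\bigr) \le \frac{1}{N}\sum_{j=1}^{N} |X_{t_n}^{j,N} - X_{t_n}^{j,N,M}|^{2}
\]
and exchangeability of the particles yield
\[
\bigl(1 - 2L(c)^{2}\bigr)\max_{i}\mathbb{E}\bigl[|X_{t_n}^{i,N} - X_{t_n}^{i,N,M}|^{2}\bigr] \le 8\,\max_{i}\mathbb{E}\bigl[|Y_{t_n}^{i,N} - \tilde{X}_{t_n}^{i,N,M}|^{2}\bigr].
\]
The choice of $c$ in Remark \ref{REM:LIP} ensures $1-2L(c)^{2}>1/2$, so the prefactor on the left is absorbed, and the task reduces to bounding the Euler error for the transformed system by $Ch$.

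For the latter I would use that by Lemma \ref{lem:lip} the coefficients $B_i,\Sigma^{i,j}$ are Lipschitz, with the per-particle bound recorded there of the schematic form $|B_i(\boldsymbol{x}^N) - B_i(\boldsymbol{y}^N)|^2 \le C\bigl(|x_i - y_i|^2 + \tfrac{1}{N}\sum_k |x_k - y_k|^2\bigr)$. Combined with the extra uniform boundedness of $b$ assumed in this theorem, the boundedness of $\sigma$ and of the derivatives of $\alpha$ and $G$ renders $\boldsymbol{B}_N$ and $\boldsymbol{\Sigma}_N$ globally bounded, so that both $Y^{i,N}$ and $\tilde{X}^{i,N,M}$ enjoy moment bounds uniform in $N,M$. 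A routine Euler--Maruyama argument (It\^o expansion, BDG, H\"older, together with the standard Lipschitz estimate between $\tilde{X}_t^{i,N,M}$ and $\tilde{X}_{\eta(t)}^{i,N,M}$), using exchangeability to collapse $\frac{1}{N}\sum_{k}\mathbb{E}\bigl[|Y_s^{k,N} - \tilde{X}_{\eta(s)}^{k,N,M}|^{2}\bigr]$ into the single quantity $\mathbb{E}\bigl[|Y_s^{i,N} - \tilde{X}_{\eta(s)}^{i,N,M}|^{2}\bigr]$, followed by a discrete Gronwall inequality, then delivers $\max_i \mathbb{E}\bigl[|Y_{t_n}^{i,N} - \tilde{X}_{t_n}^{i,N,M}|^{2}\bigr] \le Ch$ uniformly in $N$ and $M$. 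Assembling the two bounds gives the stated rate $N^{-1/2}+h$.

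The main subtlety is ensuring that every per-particle constant remains independent of $N$: the drift $B_i$ couples all particles both through the mean-field dependence of $b$ and through the $L$-derivatives of $\alpha$, and a naive global Lipschitz bound $|\boldsymbol{B}_N(\boldsymbol{x}^N)-\boldsymbol{B}_N(\boldsymbol{y}^N)|^2 \le L_N|\boldsymbol{x}^N-\boldsymbol{y}^N|^2$ on $\RR^N$ would cost a factor of $N$ at the per-particle level. Exploiting the $1/N$ prefactors already present in \eqref{Drift} and \eqref{Diff}, and invoking exchangeability only after taking expectations, is what keeps all constants $N$-free and permits Gronwall's inequality to close the estimate.
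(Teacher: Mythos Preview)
Your proposal is correct and follows essentially the same route as the paper: split into a propagation-of-chaos term handled by Lemma~\ref{PCLem}, pass to the transformed variables $Y_{t}^{i,N}=G(X_t^{i,N},\mu_t^{\boldsymbol{X}^N})$, use Proposition~\ref{Prop} to reduce $|X_{t_n}^{i,N}-X_{t_n}^{i,N,M}|$ to $|Y_{t_n}^{i,N}-\tilde X_{t_n}^{i,N,M}|$ plus a Wasserstein term absorbed by Remark~\ref{REM:LIP}, and finally quote Lemma~\ref{lem:lip} for the $\mathcal O(h)$ Euler error of the transformed system.

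Two small differences are worth noting. First, the paper inserts the leave-one-out measure $\mu_{t_n}^{\boldsymbol{X}^{N},N-1}$ as an intermediate when rewriting $X_{t_n}^{i,N}$ through $G^{-1}\circ G$, picking up an extra $\mathcal O(N^{-1})$ term; you go directly through $\mu_{t_n}^{\boldsymbol{X}^{N}}$, which is simpler and equally valid since $G^{-1}(G(x,\mu),\mu)=x$ for any $\mu$. Second, you make the absorption step $(1-2L(c)^2)>1/2$ explicit, whereas the paper leaves it implicit in the passage from the $L(c)\mathcal W_2^2$ bound to the displayed inequality. Your added remarks on why the per-particle constants stay $N$-free (via the $1/N$ weights in \eqref{Drift}--\eqref{Diff} and exchangeability after taking expectations) also sharpen a point the paper glosses over with ``one shows that''; note that the extra boundedness of $b$ you invoke is exactly what upgrades the local Lipschitz estimate of Lemma~\ref{lem:lip} (specifically the $L_R$ constant in the $\Pi_2$ term) to a global one, so that the standard Gronwall closure goes through.
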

\begin{proof}
We start with the observation that for any $n \in \lbrace 0, \ldots, M \rbrace$
\begin{align*}
|X_{t_n}^{i,N} - X_{t_n}^{i,N,M}|^2 & = |G^{-1}(G(X_{t_n}^{i,N},\mu_{t_n}^{\boldsymbol{X}^{N},N-1}),\mu_{t_n}^{\boldsymbol{X}^{N},N-1})-G^{-1}(\tilde{X}^{i,N,M}_{t_n},\mu_{t_n}^{\boldsymbol{X}^{N,M}})|^2 \\
& \leq  2|G^{-1}(G(X_{t_n}^{i,N},\mu_{t_n}^{\boldsymbol{X}^{N},N-1}),\mu_{t_n}^{\boldsymbol{X}^{N},N-1})-G^{-1}(G(X_{t_n}^{i,N},\mu_{t_n}^{\boldsymbol{X}^{N}}),\mu_{t_n}^{\boldsymbol{X}^{N}})|^2 \\
& \quad + 2|G^{-1}(G(X_{t_n}^{i,N},\mu_{t_n}^{\boldsymbol{X}^{N}}),\mu_{t_n}^{\boldsymbol{X}^{N},N})-G^{-1}(\tilde{X}^{i,N,M}_{t_n},\mu_{t_n}^{\boldsymbol{X}^{N,M}})|^2.
\end{align*}
Using the arguments from the previous lemma, the first term can be shown to be of order $\mathcal{O}(N^{-1})$. For the second term, we derive the estimate 
\begin{align*}
& |G^{-1}(G(X_{t_n}^{i,N},\mu_{t_n}^{\boldsymbol{X}^{N}}),\mu_{t_n}^{\boldsymbol{X}^{N}})-G^{-1}(\tilde{X}^{i,N,M}_{t_n},\mu_{t_n}^{\boldsymbol{X}^{N,M}})|^2 \\
& \leq C \left( |G(X_{t_n}^{i,N},\mu_{t_n}^{\boldsymbol{X}^{N}}) - \tilde{X}^{i,N,M}_{t_n}|^2 + L(c)\mathcal{W}_2^{2}(\mu_{t_n}^{\boldsymbol{X}^{N}},\mu_{t_n}^{\boldsymbol{X}^{N,M}}) \right),
\end{align*}
where $L(c) \to 0$ as $c \to 0$, as in Proposition \ref{Prop}.

Therefore, we get
\begin{align*}
\mathbb{E} \left[|X_{t_n}^{i,N} - X_{t_n}^{i,N,M}|^2 \right] \leq C \left( N^{-1} +  \mathbb{E} \left[|G(X_{t_n}^{i,N},\mu_{t_n}^{\boldsymbol{X}^{N}}) - \tilde{X}^{i,N,M}_{t_n}|^2 \right] \right). 
\end{align*}
Using now the definition of $\tilde{X}^{i,N,M}_{t_n}$ and Lemma \ref{lem:lip}, one shows that there exists a constant $C>0$ such that $\mathbb{E} \left[|G(X_{t_n}^{i,N},\mu_{t_n}^{\boldsymbol{X}^{N}}) - \tilde{X}^{i,N,M}_{t_n}|^2 \right] \leq Ch$. This together with Lemma \ref{PCLem} yields the claim.
\end{proof}
\section{Numerical illustration}\label{SEC:NUM}
In the following, we will present two examples of McKean--Vlasov SDEs and interacting particle systems exhibiting discontinuous drifts in order to motivate the theoretical study of such equations and to numerically illustrate the strong convergence behaviour of an Euler--Maruyama scheme. These models find applications in biology and mathematical finance, in particular systemic risk. 

As we do not know the exact solution of the considered equations, the convergence rate (in terms of number of time-steps) was determined by comparing two solutions computed on a fine and coarse grid, respectively, 
where the same Brownian motions were used.
In order to illustrate the strong convergence behaviour in the uniform time-step $h$, we compute the root-mean-square error (RMSE) by comparing the numerical solution at level $l$ of the time discretisation with the solution at level $l-1$ at the final time $T=1$. 
To be precise, as error measure we use the quantity
\begin{equation*}
\text{RMSE}:= \sqrt{\frac{1}{N} \sum_{i=1}^{N} \left(X_T^{i,N,M_l} - X_T^{i,N,M_{l-1}} \right)^2},
\end{equation*}
where $M_l = 2^lT$ and by $X_T^{i,N,M_l}$ we denote the approximation of $X$ at time $T$ computed based on $N$ particles and $2^lT$ time-steps. The number of particles used in the tests will be specified below. 
\subsection{Neuronal interactions}
In this section, we provide a numerical illustration for a specific model for neuronal interactions.
Interacting particle systems are ubiquitous in neuroscience, such as the Hodgkin-Huxley model \cite{BFFT,BO2} or mean-field equations describing the behaviour of a (large) network of interacting spiking neurons \cite{DINRT}.
For other mean-field models appearing in neuroscience, we refer to the references given in \cite{DINRT}.

A recent model of the action potential of neurons is described in \cite{FPZ} and involves discontinuous coefficients. The reason for the necessity of introducing discontinuities is the following: After a charging phase of an individual neuron, subject to spikes of nearby neurons, randomness, and the effect of discharge with constant rate, the neuron emits a spike to the network once a certain threshold is hit and is then in a recovery phase.
The change between these two phases is characterised by a discontinuity in the dynamics describing the potential of each neuron. 

The action potential of $N$ interacting neurons at time $t \in [0,T]$, $V^{i,N}_t (\text{mod } 2) \in [0,2)$, where $V^{i,N}_t \in \RR$, $i \in \lbrace 1, \ldots, N \rbrace$, is modelled by the discontinuous mean-field equation
\begin{align*}
\mathrm{d}V^{i,N}_t &= \lambda(V^{i,N}_t (\text{mod } 2))  \, \mathrm{d}t + \sigma^{\varepsilon}(V^{i,N}_t (\text{mod } 2))  \, \mathrm{d}W^{i}_t \\
& \quad + \frac{1}{N}\sum_{j=1}^N \Theta(\xi_i,\xi_j) \mathrm{I}_{[1,1+\kappa]}(V^{j,N}_t (\text{mod } 2)) \mathrm{I}_{[0,1]}(V^{i,N}_t (\text{mod } 2))  \, \mathrm{d}t,
\end{align*}
with square-integrable random initial values $V_0^{i,N} = \eta_i$, for $i \in \lbrace 1, \ldots, N \rbrace$ and $0 < \kappa < 1$ fixed. 
The set of i.~i.~d.\ random variables $\lbrace \xi_1, \ldots, \xi_N \rbrace$, $\xi_i \in D$,
describes the location of the $N$ (non-moving) neurons, where $D$ is modelled as an open connected domain of $\RR^3$. Hence, the position of each neuron is given by $X_t^{i,N} = \xi^{i}$ at each time $t \in [0,T]$. It is further assumed that $\lbrace \xi_1, \ldots, \xi_N, \eta_1, \ldots, \eta_N \rbrace $ are independent for each integer $N \geq 1$.
The standard Brownian motions $(W_t^{i})_{t \in [0,T]}$ are independent, and independent of $\xi_i$ and $\eta_i$, for  $i \in \lbrace 1, \ldots, N \rbrace$. Observe that $V^{i,N}$ is specified by an SDE, where the drift has a discontinuity in the state variable (due to the choice of $\lambda$; see below for details), but also has a jump, in case a particle $j \neq i$ reaches the the critical values $1$ or $1+\kappa$.

The following conditions are imposed in \cite{FPZ} to guarantee strong well-posedness of the particle system above (see, \cite[Theorem 2.2]{FPZ}) and the associated McKean--Vlasov equation (see, \cite[Theorem 6.1]{FPZ}); propagation of chaos type results, i.e., weak convergence of the law of the empirical distribution of $(\xi^{i},V^{i,N})$ to a Dirac measure centred at the law of the solution to the underlying McKean--Vlasov equation, are shown in \cite[Theorem 5.7]{FPZ}: 
\begin{enumerate}
\item $\lambda(v) = -\hat{\lambda}v \mathrm{I}_{[0,1]}(v) + \mathrm{I}_{(1,2)}, \quad \hat{\lambda} >0$, \quad  $\Theta(x,y) = \sin(|x-y|)$, for $x,y \in \RR^3$;
\item $ \sigma^{\varepsilon}$ is a $\mathcal{C}_b^{1}([0,2])$ function satisfying $ \sigma^{\varepsilon} \geq \sqrt{2 \varepsilon} > 0$ and
\begin{align*}
\sigma^{\varepsilon}(v) = \sqrt{2\varepsilon} \text{ on } [1,2], \quad \sigma^{\varepsilon}(2) =  \sigma^{\varepsilon}(0) = \sqrt{2 \varepsilon},  \quad (\sigma^{\varepsilon})'(0) = (\sigma^{\varepsilon})'(2) = 0, \text{ with } \varepsilon>0 \text{ fixed}.
\end{align*} 
\end{enumerate} 

\noindent
These conditions specify our Example 1. For Example 2, the diffusion term was chosen $\sigma^{\varepsilon}(x) = \sqrt{2\varepsilon}+x$. For our tests, we used $N=10^3$. Furthermore, we set $\kappa =0.01$, $\hat{\lambda} = 0.02$ and $\varepsilon = 0.1$. The initial values $\eta_1, \ldots, \eta_N$ are chosen as independent normal random variables with mean $1$ and standard deviation $2$. Also, these values are considered modulo 2. The variables $\xi_1, \ldots, \xi_N$ are independent three-dimensional random variables chosen from the same multivariate normal distribution with some given mean vector and covariance matrix.

We investigate numerically the convergence of Scheme 2, i.e., the Euler--Maruyama scheme without applying any transformations.
In Fig.\ \ref{fig:StrongOrder1}, we observe strong convergence of order $3/4$ for Example 1, which is most likely due to the choice of $\sigma^{\varepsilon} = \sqrt{2\varepsilon}$ as constant. In \cite{MGLY219} a Milstein scheme for one-dimensional SDEs with discontinuities in the drift was derived and a strong convergence order of $3/4$ was proven. In addition, it is conjectured in \cite{MGLY219} (Conjecture 1 and Conjecture 2) that the rate $3/4$ is optimal. 
\begin{figure}[!h]
\centering
\includegraphics[width=0.5\textwidth]{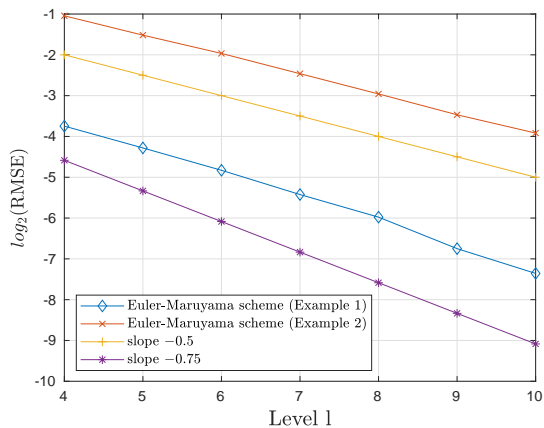}
\caption{Strong convergence of the Euler--Maruyama scheme applied to the particle system obtained by approximating the equation for the action potential of the neurons.}
\label{fig:StrongOrder1}
\end{figure}

\subsection{Systemic risk}
In this section, we consider a McKean--Vlasov SDE of the form
\begin{equation}\label{eq:control}
\mathrm{d}X_t = \left( a \left(\mathbb{E}[X_t]- X_t \right) + \kappa_1 \mathrm{I}_{ \lbrace X_t \leq 0 \rbrace} + \kappa_2 \mathrm{I}_{\lbrace X_t > 0 \rbrace}  \right) \, \mathrm{d}t + (\sigma + X_t) \, \mathrm{d}W_t, \quad X_0=x \in \RR,
\end{equation}
where $a \geq0$ is the mean-reversion rate, $\kappa_1 < 0$, $\kappa_2>0$ and $\sigma >0$. The strong well-posedness of (\ref{eq:control}) follows from Proposition \ref{Prop1Ex}. This equation can be linked to a model of systemic risk in \cite{CARM}, where a mean-field game of $N$
 banks borrowing from, and lending to, a central bank is proposed.
 The banks control the rate of their borrowing depending on their (log-)monetary reserves, which are modelled by a system of SDEs
 with interaction through their average. In this setting flocking, and thus systemic default events, may occur.
 Here, we give a slight reformulation of this problem following \cite[Section 4]{XGUO}. The problem consists in finding $(\hat{\mu}_t,\hat{\beta}_t)_{t \in [0,T]}$, where $(\hat{\mu}_t)_{t \in [0,T]}$ is a flow of measures in $\mathcal{C}([0,T],\mathcal{P}_2(\RR))$ and $(\hat{\beta}_t)_{t \in [0,T]}$ is an adapted, square-integrable control process (describing the rate of borrowing from, or lending to, the central bank), such that $(\hat{\beta}_t)_{t \in [0,T]}$ minimises 
the objective function given by
\begin{equation*}
J^{\hat{\mu}}(x;\beta) = \mathbb{E} \left[ \int_{0}^{T}\left( r |\beta_t| + \frac{\varepsilon}{2} \left(X^{\hat{\mu},\beta}_t - \int_{\RR} x \, \hat{\mu}_t(\mathrm{d}x) \right)^2 \right) \, \mathrm{d}t  + \frac{c}{2} \left(X^{\hat{\mu},\beta}_T - \int_{\RR} x \, \hat{\mu}_T(\mathrm{d}x)  \right)^2  \right],
\end{equation*}   
for $r, \varepsilon, c \geq 0$, where
\begin{equation}\label{eg:Mod1}
\mathrm{d}X^{\hat{\mu},\beta}_t = \left( a \left(\int_{\RR} x \, \hat{\mu}_t(\mathrm{d}x)- X^{\hat{\mu},\beta}_t \right) + \beta_t \right) \, \mathrm{d}t + (\sigma + X^{\hat{\mu},\beta}_t) \, \mathrm{d}W_t, \quad X^{\hat{\mu},\beta}_0=x \in \RR,
\end{equation}
and $\hat{\mu}_t = \mathcal{L}_{X^{\hat{\mu},\hat{\beta}}_t}$ for all $t \in [0,T]$.

For simplicity, we did not add the common noise term as in \cite{CARM}. In addition, we modified the diffusion to allow it to be degenerate. 
In \cite{XGUO}, a constraint $\beta_t \in [\kappa_1,\kappa_2]$ on the borrowing/lending rate is imposed for all $t \in [0,T]$. The minimiser of the objective function for this mean-field game (with constant diffusion term) is shown to be a control of bang-zero-bang type 
(see \cite[equation (24)]{XGUO} for an analytic expression).
Written in feedback form, this optimal control strategy has discontinuities  that are time-dependent, i.e., the zero-control region changes over time,
a setting not covered by our current analysis.

Using instead the special bang-bang type control $(\beta^{*}_t)_{t \in [0,T]}$ of the form 
\begin{align*}
\beta^{*}_t =
\begin{cases}
\kappa_1, &\text{ if } X_t \leq 0 \\
\kappa_2, & \text{ if } X_t > 0, \\
\end{cases}
\end{align*}
and plugging $\beta_t^{*}$ back into \eqref{eg:Mod1} results into an equation of the form (\ref{eq:control}) with a one-point discontinuity.
  
In our numerical experiments, we set $\kappa_1= -0.5$, $\kappa_2=0.5, x=0$ and $\sigma =0.7$. Further, we consider three different choices for the mean-reversion rate, i.e., we set $a=1,5,10$. The expected value is approximated by the empirical mean of $N=10^4$ particles. For a larger value of $a$, we expect sample paths of (\ref{eq:control}) to be more concentrated around the mean of the samples; see Fig.\ \ref{fig:StrongOrder3a} and \ref{fig:StrongOrder3b} (with $T=1$ and $M=2^7$). We can also observe that for a stronger concentration effect the strong approximation behaviour becomes better; see Fig.\ \ref{fig:StrongOrder3b}. 

\begin{figure}[H]
\centering
\begin{subfigure}[b]{0.45\textwidth}
\includegraphics[width=\textwidth]{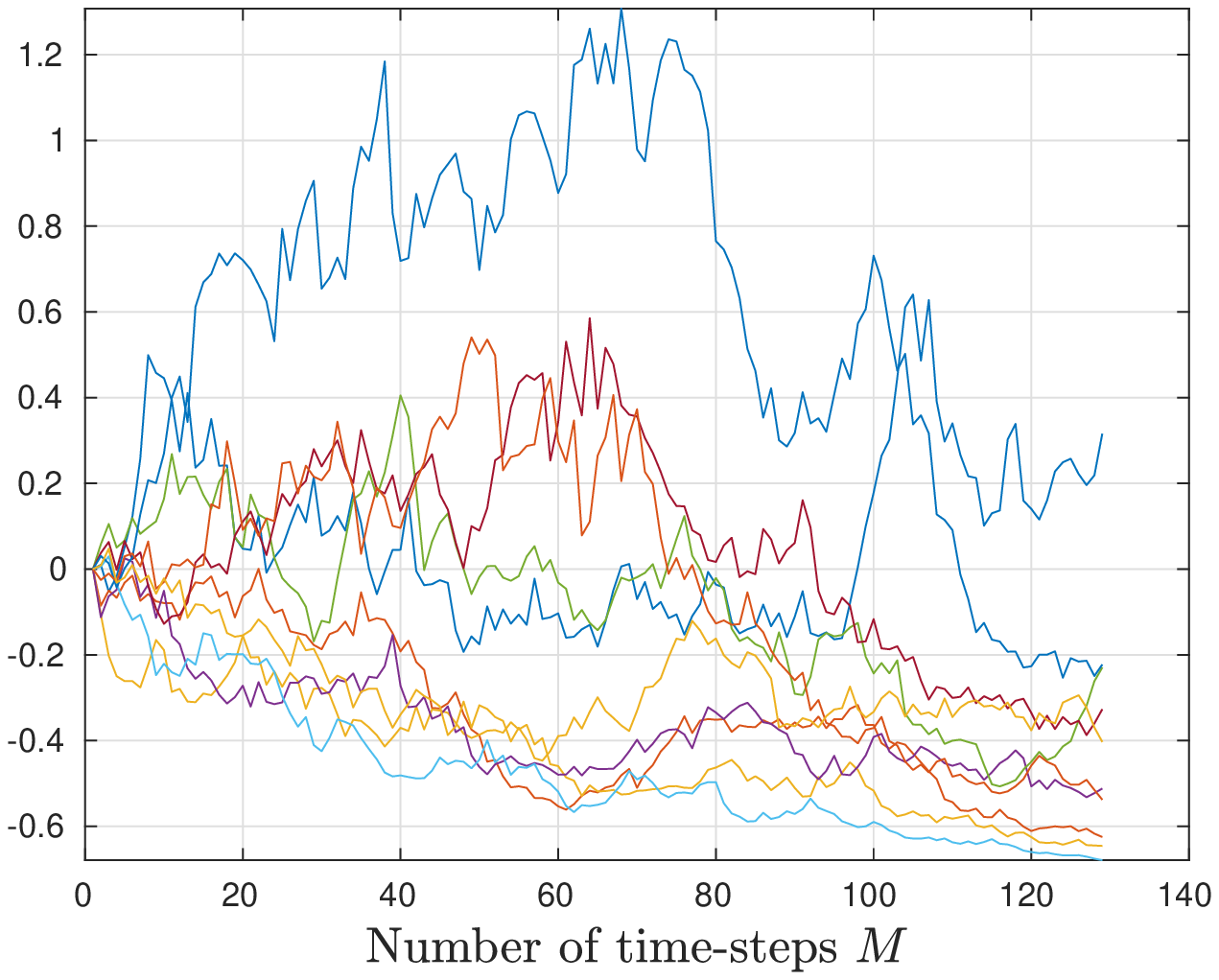}
\caption{}
\label{fig:StrongOrder3a}
\end{subfigure}
\begin{subfigure}[b]{0.45\textwidth}
\includegraphics[width=\textwidth]{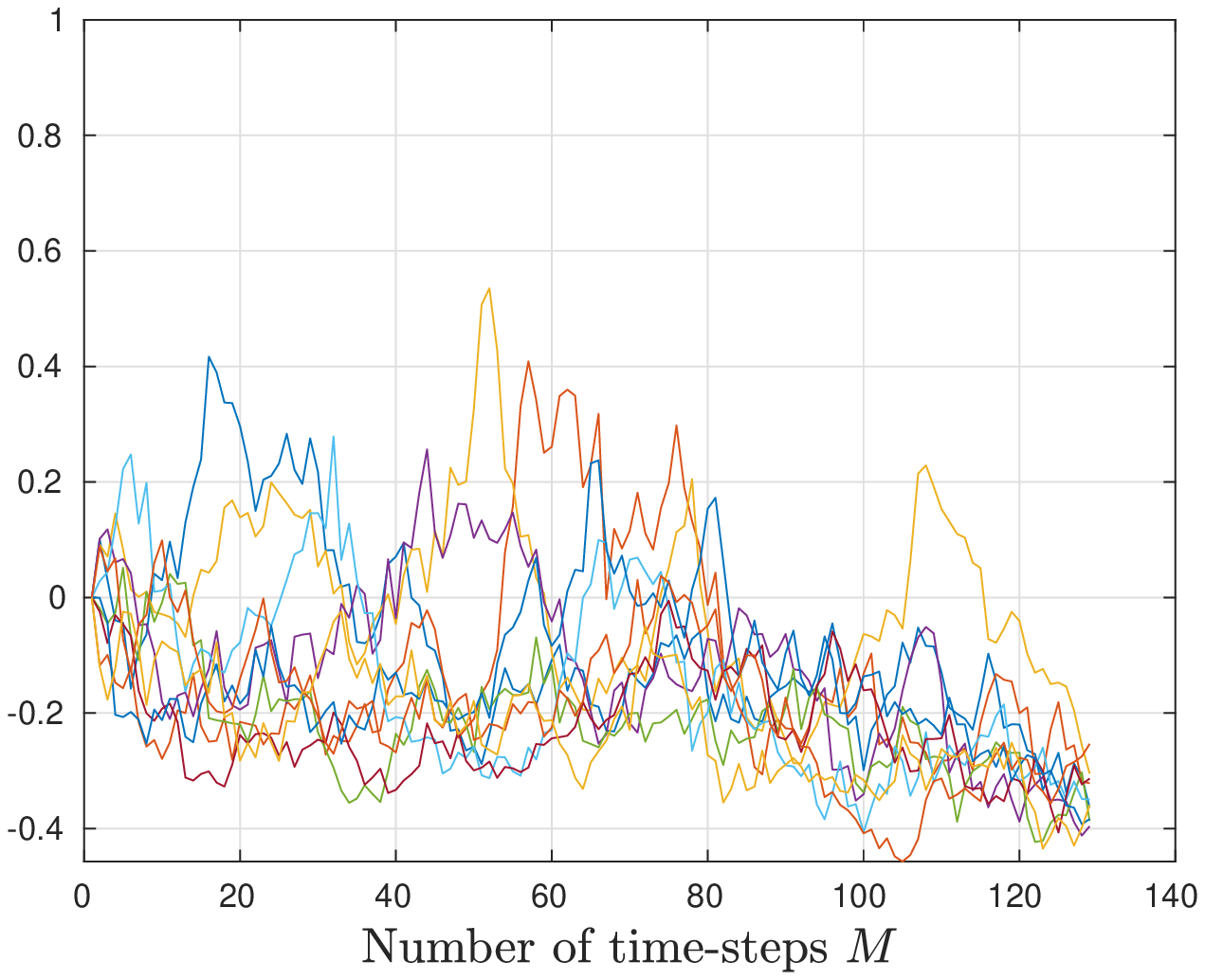}
\caption{}
\label{fig:StrongOrder3b}
\end{subfigure}
\caption{Sample trajectories of the particle system associated with (\ref{eq:control}) for $N=10$ and $a=1$ (left) and $a=10$ (right).}
\end{figure}

\begin{figure}[H]
\centering
\includegraphics[width=0.5\textwidth]{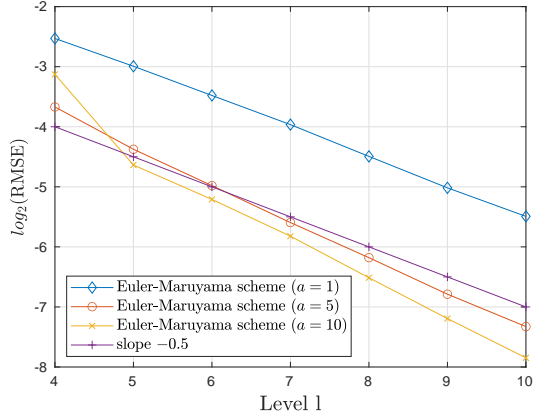}
\caption{Strong convergence of the Euler--Maruyama scheme applied to the particle system obtained by approximating the equation (\ref{eq:control}) with $a=1,5,10$.}
\label{fig:StrongOrder2}
\end{figure} 

\appendix
\section{Auxiliary results}
\subsection{$L$-differentiability of $G^{-1}$}\label{SEC:AA1}
\begin{proposition}\label{Prop:LD}
Let $G\colon \RR\times \mathcal{P}_2(\RR) \to \RR$ be defined by \eqref{eq:TransMeas2} with $c>0$ as in Remark \ref{REM:LIP} and let the assumptions of Proposition \ref{Prop} be satisfied. Assume that $G$ is $L$-differentiable, and $\RR \ni y \mapsto \partial_{\mu}G(x,\mu)(y)$ is in $\mathcal{C}^{1}(\RR,\RR)$ for all $(x,\mu) \in \RR \times \mathcal{P}_2(\RR)$. Then, the inverse $G^{-1}\colon \RR\times \mathcal{P}_2(\RR) \to \RR$ is continuously $L$-differentiable with 
\begin{equation}\label{eq:partial-mu-G1}
\partial_{\mu}G^{-1}(x,\mu)(y)= \frac{-\partial_{\mu} G(G^{-1}(x,\mu),\mu)(y)}{\partial_{x} G(G^{-1}(x,\mu),\mu)},
\end{equation}
for all $(x,\mu,y) \in \RR \times \mathcal{P}_2(\RR) \times \RR$.
\end{proposition}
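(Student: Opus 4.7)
The plan is to differentiate the defining identity $G(G^{-1}(x,\mu),\mu)=x$ with respect to $\mu$ and solve for $\partial_\mu G^{-1}$. Since $L$-differentiability is defined via Fr\'echet differentiability on the lifted space, I will work with a lifted version $\widetilde{H}(x,Y):=G^{-1}(x,\mathcal{L}_Y)$ defined for $Y\in L_2(\Omega,\mathcal{F},\mathbb{P};\RR)$, fix $Y_0$ with $\mathcal{L}_{Y_0}=\mu_0$, and prove Fr\'echet differentiability of $Y\mapsto\widetilde{H}(x,Y)$ at $Y_0$. The identity $G(\widetilde{H}(x,Y),\mathcal{L}_Y)=x$ for all $Y$ rewrites as
\[
G(\widetilde{H}(x,Y),\mathcal{L}_Y)-G(\widetilde{H}(x,Y_0),\mathcal{L}_Y)+G(\widetilde{H}(x,Y_0),\mathcal{L}_Y)-G(\widetilde{H}(x,Y_0),\mathcal{L}_{Y_0})=0,
\]
and the two halves will be handled by the classical fundamental theorem of calculus in $x$ and by $L$-differentiability of $G$ in $\mu$, respectively.

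For the first half, I will write
\[
G(\widetilde{H}(x,Y),\mathcal{L}_Y)-G(\widetilde{H}(x,Y_0),\mathcal{L}_Y)=\Lambda(Y)\cdot(\widetilde{H}(x,Y)-\widetilde{H}(x,Y_0)),
\]
with $\Lambda(Y):=\int_0^1\partial_x G\bigl(\widetilde{H}(x,Y_0)+t(\widetilde{H}(x,Y)-\widetilde{H}(x,Y_0)),\mathcal{L}_Y\bigr)\,\mathrm{d}t$. By the proof of Proposition~\ref{Prop} we have $\Lambda(Y)\ge 1/2$ uniformly, and by the Lipschitz continuity of $\widetilde{H}$ in $Y$ (again Proposition~\ref{Prop}) and continuity of $\partial_x G$ we obtain $\Lambda(Y)\to \partial_x G(G^{-1}(x,\mu_0),\mu_0)$ as $\|Y-Y_0\|_{L_2}\to 0$. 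For the second half, the assumption $\alpha\in \mathcal{C}^{(1,1)}_b$ (hence $G(z,\cdot)\in\mathcal{C}^{(1,1)}_b$ for each fixed $z$) supplies the expansion
\[
G(\widetilde{H}(x,Y_0),\mathcal{L}_Y)-G(\widetilde{H}(x,Y_0),\mathcal{L}_{Y_0})=\EE\bigl[\partial_\mu G(G^{-1}(x,\mu_0),\mu_0)(Y_0)\,(Y-Y_0)\bigr]+o(\|Y-Y_0\|_{L_2}).
\]

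Combining the two estimates and dividing by $\Lambda(Y)$ yields
\[
\widetilde{H}(x,Y)-\widetilde{H}(x,Y_0)=\EE\!\left[\frac{-\partial_\mu G(G^{-1}(x,\mu_0),\mu_0)(Y_0)}{\partial_x G(G^{-1}(x,\mu_0),\mu_0)}\,(Y-Y_0)\right]+o(\|Y-Y_0\|_{L_2}),
\]
where absorbing the difference $\Lambda(Y)^{-1}-\partial_x G(G^{-1}(x,\mu_0),\mu_0)^{-1}$ into the remainder uses the uniform lower bound on $\partial_x G$ together with the Lipschitz bound $|\widetilde{H}(x,Y)-\widetilde{H}(x,Y_0)|\lesssim \|Y-Y_0\|_{L_2}$. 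This identifies the Fr\'echet derivative of the lifted map and, by the Riesz representation recalled in Section~\ref{sec:Prelim}, gives the claimed formula~\eqref{eq:partial-mu-G1}. Continuous $L$-differentiability then follows because the right-hand side is jointly continuous in $(\mu,y)$: $G^{-1}$ is Lipschitz by Proposition~\ref{Prop}, $\partial_x G$ and $\partial_\mu G$ are continuous (with $\partial_x G\ge 1/2$), and the numerator inherits the joint continuity of $\partial_\mu\alpha$ via $\partial_\mu G(z,\mu)(y)=\partial_\mu\alpha(\mu)(y)|z|z\phi(z/c)$.

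The main obstacle is controlling the remainder term uniformly so that the bound $o(\|Y-Y_0\|_{L_2})$ persists after division by $\Lambda(Y)$ and after replacing $\Lambda(Y)$ by its limit; this is where the uniform lower bound $\partial_x G\ge 1/2$ from Proposition~\ref{Prop} and the Lipschitz estimate for $G^{-1}$ in the measure argument are essential. Once this book-keeping is done, the formula~\eqref{eq:partial-mu-G1} drops out as an algebraic consequence of differentiating the defining identity, and continuity of the $L$-derivative is then a direct composition-of-continuous-maps argument.
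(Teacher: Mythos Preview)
Your proposal is correct and follows essentially the same route as the paper: work in the lifted space, use the defining identity $G(G^{-1}(x,\mu),\mu)=x$, split the increment into a spatial part (controlled via the uniform lower bound $\partial_x G\ge 1/2$ from Proposition~\ref{Prop}) and a measure part (controlled via $L$-differentiability of $G$), and use the Lipschitz estimate $|G^{-1}(x,\nu)-G^{-1}(x,\mu)|\lesssim \mathcal{W}_2(\mu,\nu)\le\|Y-Y_0\|_{L_2}$ to manage remainders. The only cosmetic difference is that you package the spatial part via the integral mean value $\Lambda(Y)$ and then divide, whereas the paper multiplies through by $\partial_x G$ first and uses a direct add-and-subtract decomposition into three terms; the ingredients and the logic are the same.
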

\begin{proof}
First, we remark that (H.\ref{Assum:AA}(\ref{Assum:AA3})) guarantees the $L$-differentiability of $G$.
Let now $X,Y \in L_2(\Omega, \mathcal{F},\mathbb{P};\mathbb{\RR})$ with
$\mathcal{L}_{X}=\mu$ and $\mathcal{L}_{X+Y}=\nu$, for $\mu, \nu \in
\mathcal{P}_2(\RR)$. Consider the lifted inverse function $\tilde{G}^{-1}$
defined by $\tilde{G}^{-1}(y,X):= G^{-1}(y,\mu)$, for any $y \in \RR$.
Similarly, we set $\tilde{G}(x,X):= G(x,\mu)$ and $\tilde{G}(x,X+Y):= G(x,\nu)$, for any $x \in \RR$.
Now fix $y\in \RR$ and set $x:=\tilde{G}^{-1}(y,X)$, $h:=\tilde{G}^{-1}(y,X+Y)-x$. 

Observe that $y=G(G^{-1}(y,\nu),\nu)=G(x+h,\nu)$ and also $y=G(G^{-1}(y,\mu),\mu)=G(x,\mu)$. In addition, due to the Lipschitz continuity of $\mu \mapsto G^{-1}(y,\mu)$, we have 
\begin{equation}\label{eq:help1}
|h|  = |G^{-1}(y,\nu) - G^{-1}(y,\mu)| \leq L \mathcal{W}_2(\nu,\mu) \leq L \|Y \|_{L_2}.
\end{equation}
In what follows, we aim to show \eqref{eq:partial-mu-G1}: 
\begin{align*}
& \frac{\left| \tilde{G}^{-1}(y,X+Y) - \tilde{G}^{-1}(y,X) - \left \langle \frac{-\partial_{\mu} G(x,\mu)(X)}{\partial_{x} G(x,\mu)},Y \right \rangle_{L_2} \right|}{ \|Y \|_{L_2}}  \leq  C \frac{\left| -\partial_{x} G(x,\mu) h  - \left \langle \partial_{\mu} G(x,\mu)(X),Y \right \rangle_{L_2} \right|}{ \|Y \|_{L_2}},
\end{align*}
for some constant $C>0$, where we used the boundedness of $x \mapsto \partial_{x} G(x,\mu)$. 
Employing this estimate, along with the identity
$\tilde{G}(x,X)=\tilde{G}(x+h,X+Y)$ and \eqref{eq:help1}, we obtain
\begin{align*}
& \frac{\left| \tilde{G}^{-1}(y,X+Y) - \tilde{G}^{-1}(y,X) - \left \langle \frac{-\partial_{\mu} G(x,\mu)(X)}{\partial_{x} G(x,\mu)},Y \right \rangle_{L_2} \right|}{ \|Y \|_{L_2}}  \\
&\leq C\frac{\left| -\partial_{x} G(x,\mu)h  + \tilde{G}(x+h,X+Y)  -  \tilde{G}(x,X) - \left \langle \partial_{\mu} G(x,\mu)(X),Y \right \rangle_{L_2} \right|}{ \|Y \|_{L_2}} \\
&\leq C\frac{\left|\tilde{G}(x+h,X+Y) - \tilde{G}(x,X+Y) -\partial_{x} G(x,\nu)h  \right|}{ \|Y \|_{L_2}}
+C\frac{\left|\partial_{x} G(x,\nu)h  -\partial_{x} G(x,\mu)h   \right|}{ \|Y \|_{L_2}} \\
&\quad+C\frac{\left| \tilde{G}(x,X+Y)  -  \tilde{G}(x,X) - \left \langle \partial_{\mu} G(x,\mu)(X),Y \right \rangle_{L_2} \right|}{ \|Y \|_{L_2}}\\
 &\leq CL\frac{\left|G(x+h,\nu) - G(x,\nu) -\partial_{x} G(x,\nu)h  \right|}{ |h|}
+CL\left|\partial_{x} \tilde G(x,X+Y)  -\partial_{x} \tilde G(x,X)   \right| \\
&\quad+C\frac{\left| \tilde{G}(x,X+Y)  -  \tilde{G}(x,X) - \left \langle \partial_{\mu} G(x,\mu)(X),Y \right \rangle_{L_2} \right|}{ \|Y \|_{L_2}} \,.
\end{align*}
Now, if $\| Y \|_{L_2} \to 0$, then also $ |h| \to 0$, so the terms in the last estimate tend to zero from which the claim follows. Furthermore, we remark that the mapping $y \mapsto \frac{-\partial_{\mu} G(G^{-1}(x,\mu),\mu)(y)}{\partial_{x} G(G^{-1}(x,\mu),\mu)}$ is in $\mathcal{C}^{1}(\RR,\RR)$.
\end{proof}
\subsection{The class $\mathscr{C}$}\label{SEC:APP2}
For a given $ N \in \NN$, we define for all $k\in \{1,\ldots, N\}$ the sets
\[
\Theta^k:=\{\boldsymbol{x}^{N}=(x_1,\ldots,x_N)^{\top} \colon x_k=0\}.
\] 

\begin{definition}
For $ N \in \NN$, we define 
the class $\mathscr{C}$ of functions 
$\boldsymbol{G}_N\colon \RR^N\to \RR^N$ with the following properties:
\begin{enumerate}[(p1)]
\item\label{it:c1} $\boldsymbol{G}_N$ is $\mathcal{C}^1(\RR^N,\RR^{N \times N})$;
\item\label{it:detne0} for all $ \boldsymbol{x}^{N} \in \RR^N$, $\det\big(\boldsymbol{G}_N'(\boldsymbol{x}^{N})\big)\ne 0$;
\item\label{it:hadamard} $\lim_{|\boldsymbol{x}^{N}|\to \infty} |\boldsymbol{G}_N(\boldsymbol{x}^{N})|=\infty$;
\item\label{it:theta-zero} for all $k\in \{1,\dots,N \}$ and all $\boldsymbol{x}^{N} \in \Theta^k$, we have $G_k(\boldsymbol{x}^{N})=0$;
\item\label{it:partial-delta} for all $j,k\in \{1,\dots,N \}$ and
all $\boldsymbol{x}^{N} \in \Theta^k$, we have $\partial_{x_j} G_k(\boldsymbol{x}^{N})=\delta_{j,k}$;  
\item\label{it:partial-mixed} for all $i,j,k\in \{1,\dots,N\}$ with $i\ne j$ the
mixed partial derivative $\partial_{x_i}\partial_{x_j}G_k$ exists
and is continuous;
\item\label{it:partial-second-neq} for all $j,k\in \{1,\dots,N \}$ with $j\ne k$ the
second partial derivative $\partial_{x_j}^2G_k$ exists
and is continuous;
\item\label{it:partial-second-eq} for all $k\in \{1,\dots,N\}$ the
second partial derivative $\partial_{x_k}^2 G_k$ exists 
 on $\RR^N\setminus \Theta^k$ and is continuous there.
\end{enumerate}
\end{definition}

The aim of this appendix is to prove the following theorem:

\begin{theorem}
If $\boldsymbol{G}_N \in \mathscr{C}$, then $\boldsymbol{G}_N$ is invertible with $\boldsymbol{G}_N^{-1}\in \mathscr{C}$.  
\end{theorem}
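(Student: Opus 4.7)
The plan is to verify each defining property \pref{it:c1}--\pref{it:partial-second-eq} for $\boldsymbol{G}_N^{-1}$. First I would use \pref{it:c1}--\pref{it:hadamard} together with Hadamard's global inverse theorem, exactly as in Lemma~\ref{lem:Inverse}, to conclude that $\boldsymbol{G}_N$ is a $\mathcal{C}^1$-diffeomorphism of $\RR^N$ onto itself. The inverse function theorem then gives $\boldsymbol{G}_N^{-1}\in \mathcal{C}^1(\RR^N,\RR^{N\times N})$ with $(\boldsymbol{G}_N^{-1})'(\boldsymbol{y}) = \boldsymbol{G}_N'(\boldsymbol{G}_N^{-1}(\boldsymbol{y}))^{-1}$, so that \pref{it:c1} and \pref{it:detne0} transfer to the inverse immediately, while \pref{it:hadamard} for $\boldsymbol{G}_N^{-1}$ follows by a standard contradiction argument using the continuity of $\boldsymbol{G}_N$: any sequence $|\boldsymbol{y}_n|\to\infty$ with $\boldsymbol{G}_N^{-1}(\boldsymbol{y}_n)$ bounded along a subsequence would give $\boldsymbol{y}_n = \boldsymbol{G}_N(\boldsymbol{G}_N^{-1}(\boldsymbol{y}_n))$ bounded along the same subsequence, a contradiction.

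Next I would establish that $\boldsymbol{G}_N(\Theta^k) = \Theta^k$ for each $k$, which yields \pref{it:theta-zero} for $\boldsymbol{G}_N^{-1}$. The inclusion $\boldsymbol{G}_N(\Theta^k)\subset \Theta^k$ is direct from \pref{it:theta-zero} applied to $\boldsymbol{G}_N$. For the reverse inclusion, \pref{it:partial-delta} forces the $k$-th row of $A(\boldsymbol{x}):=\boldsymbol{G}_N'(\boldsymbol{x})$ to equal $e_k^{\top}$ at every $\boldsymbol{x}\in \Theta^k$; together with \pref{it:detne0}, cofactor expansion along that row shows the $(N-1)\times(N-1)$ submatrix obtained by deleting the $k$-th row and column is non-singular. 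Hence the restriction $\boldsymbol{G}_N|_{\Theta^k}$ is a $\mathcal{C}^1$ local diffeomorphism of $\Theta^k\cong \RR^{N-1}$ to itself, which is proper by \pref{it:hadamard}; a second application of Hadamard's theorem forces it to be surjective onto $\Theta^k$. For \pref{it:partial-delta} applied to $\boldsymbol{G}_N^{-1}$, the elementary fact that the inverse of a matrix whose $k$-th row is $e_k^{\top}$ also has $k$-th row $e_k^{\top}$ finishes this step.

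The non-trivial work lies in \pref{it:partial-mixed}--\pref{it:partial-second-eq} for $\boldsymbol{G}_N^{-1}$. Differentiating the identity $G_r(\boldsymbol{G}_N^{-1}(\boldsymbol{y})) = y_r$ twice yields
\begin{equation*}
\partial_{y_i}\partial_{y_j}(\boldsymbol{G}_N^{-1})_r(\boldsymbol{y}) = -\sum_{k,l,m}[A^{-1}]_{r,k}\,\partial_{x_l}\partial_{x_m}G_k(\boldsymbol{x})\,[A^{-1}]_{l,i}\,[A^{-1}]_{m,j},\qquad \boldsymbol{x}=\boldsymbol{G}_N^{-1}(\boldsymbol{y}).
\end{equation*}
All terms with $l\neq m$, or with $l=m\neq k$, are continuous on $\RR^N$ by \pref{it:partial-mixed} and \pref{it:partial-second-neq}; the remaining $l=m=k$ terms involve the potentially discontinuous $\partial_{x_k}^2G_k$, defined only on $\RR^N\setminus\Theta^k$. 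The structural fact from the second step is that on $\Theta^k$ the $k$-th row of $A^{-1}$ equals $e_k^{\top}$, so $[A^{-1}]_{k,i}[A^{-1}]_{k,j}=\delta_{k,i}\delta_{k,j}$ there; consequently the singular contribution drops out on $\Theta^k$ unless $i=j=k$, which is precisely the case where \pref{it:partial-second-eq} for the inverse permits a discontinuity.

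The main obstacle I anticipate is turning this pointwise vanishing of $[A^{-1}]_{k,i}[A^{-1}]_{k,j}$ on $\Theta^k$ into genuine continuity of the inverse's mixed and off-diagonal second derivatives in a full neighbourhood of $\Theta^k$, since \pref{it:partial-second-eq} provides continuity of $\partial_{x_k}^2G_k$ only off $\Theta^k$ and no uniform control. This calls for a quantitative refinement of \pref{it:partial-delta}, obtained by Taylor-expanding the entries of $A(\boldsymbol{x})$ around $\Theta^k$ using \pref{it:partial-mixed} and \pref{it:partial-second-neq}, so that the rate of decay of $[A^{-1}]_{k,i}[A^{-1}]_{k,j}$ near $\Theta^k$ dominates the size of $\partial_{x_k}^2G_k$ allowed by the construction; for the concrete transformation \eqref{G_part} used in the paper, $\partial_{x_k}^2G_k$ is bounded across $\Theta^k$, which makes the argument straightforward in the case of interest and ultimately justifies applying It\^o's formula to $\boldsymbol{G}_N^{-1}$.
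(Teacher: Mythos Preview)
Your overall strategy coincides with the paper's: Hadamard for \pref{it:c1}--\pref{it:hadamard}, the structural properties \pref{it:theta-zero}--\pref{it:partial-delta} via the Jacobian relation $(\boldsymbol{G}_N^{-1})'=(\boldsymbol{G}_N')^{-1}\circ\boldsymbol{G}_N^{-1}$, and then the second-order properties by differentiating that relation once more. Your route to \pref{it:theta-zero} is genuinely different: the paper fixes $(x_j)_{j\ne k}$ and argues that $x_k\mapsto G_k(\boldsymbol{x}^N)$ has $0$ as its unique zero, whereas you restrict $\boldsymbol{G}_N$ to $\Theta^k$, use \pref{it:partial-delta} plus cofactor expansion to see the restricted Jacobian is non-singular, and apply Hadamard a second time on $\Theta^k\cong\RR^{N-1}$. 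Your version is cleaner and fully self-contained.

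There is, however, a slip in your treatment of \pref{it:partial-second-neq}. You correctly note that on $\Theta^k$ the factor $[A^{-1}]_{k,i}[A^{-1}]_{k,j}=\delta_{k,i}\delta_{k,j}$, so the singular contribution survives only when $i=j=k$. But ``$i=j=k$'' is \emph{not} precisely the case covered by \pref{it:partial-second-eq} for the inverse: that property exempts only $\partial_{y_r}^2 H_r$ on $\Theta^r$, i.e.\ it requires in addition $r=k$. The case $i=j=k\ne r$ falls under \pref{it:partial-second-neq}, where continuity across $\Theta^k$ is mandatory, yet the surviving term there is $[A^{-1}]_{r,k}\,\partial_{x_k}^2 G_k\cdot 1$, and nothing in the definition of $\mathscr{C}$ forces the $k$-th \emph{column} of $A$ (hence of $A^{-1}$) to equal $e_k$ on $\Theta^k$---only the $k$-th row is controlled by \pref{it:partial-delta}. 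Mere boundedness of $\partial_{x_k}^2 G_k$ does not repair this, since a bounded jump times a non-vanishing continuous factor is still discontinuous. For the concrete transform \eqref{G_part} what actually saves the day is the assumption $\partial_\mu\alpha(\mu)(0)=0$ from (H.\ref{Assum:AA}(\ref{Assum:AA4})): it forces $\partial_{x_k}G_l=\delta_{l,k}$ on $\Theta^k$ for all $l$, so the $k$-th column of $A$ is $e_k$ too and $[A^{-1}]_{r,k}=\delta_{r,k}$ kills the remaining term. The paper's own proof is a single sentence at this point and does not isolate this mechanism either.
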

\begin{proof}
Let $\boldsymbol{G}_N = (G_1, \ldots, G_N)^{\top} \in \mathscr{C}$.
The Hadamard global inverse function theorem states that under assumptions
\pref{it:c1}, \pref{it:detne0}, and \pref{it:hadamard}, $\boldsymbol{G}_N$ is invertible with inverse in $\mathcal{C}^1(\RR^N,\RR^{N \times N})$. 
We denote $\boldsymbol{H}_N=\boldsymbol{G}^{-1}_N$ and conclude
\begin{itemize}
\item[\pref{it:c1}] $\boldsymbol{H}_N \in \mathcal{C}^1(\RR^N,\RR^{N \times N})$;
\item[\pref{it:detne0}] for all $\boldsymbol{x}^{N} \in \RR^d$, $\det\big(\boldsymbol{H}_N'(\boldsymbol{x}^{N})\big)\ne 0$;
\item[\pref{it:hadamard}] $\lim_{|\boldsymbol{x}^{N}|\to \infty} |\boldsymbol{H}_N(\boldsymbol{x}^{N})|=\infty$.
\end{itemize}

Let $k\in \{1,\ldots,N\}$ 
and fix values $x_1,\ldots,x_{k-1},x_{k+1},\ldots,x_N\in \RR$.
As $\boldsymbol{G}_N$ has a global inverse, the mapping $ \RR \ni x_k \mapsto G_k(x_1,\ldots,x_N)$ is invertible, i.e., there is precisely one $x_k$ with $G_k(x_1,\ldots,x_N)=0$.
On the other hand, $G_k(x_1,\ldots,x_{k-1},0,x_{k+1},\ldots,x_N)=0$, and consequently we proved:
\begin{itemize}
\item[\pref{it:theta-zero}] for all $k\in \{1,\dots,N\}$ and all $\boldsymbol{y} \in \Theta^k$, we have $H_k(\boldsymbol{y})=0$.
\end{itemize}

We note that, since $\boldsymbol{H}_N \circ \boldsymbol{G}_N$ is the identity on $\RR^N$
\begin{equation*}
\partial_{x_j}(H_k \circ \boldsymbol{G}_N)=\delta_{j,k},
\end{equation*}
and therefore, using \pref{it:partial-delta} for $\boldsymbol{G}_N$, we have for $\boldsymbol{x}^{N} \in \Theta^k$
\[
\delta_{j,k}
=\sum_{l=1}^N \partial_{y_l} H_k\circ \boldsymbol{G}_N(\boldsymbol{x}^{N})\,\partial_{x_j} G_l(\boldsymbol{x}^{N})
=\sum_{l=1}^N \partial_{y_l} H_k\circ \boldsymbol{G}_N(\boldsymbol{x}^{N})\,\delta_{j,l}=\partial_{y_j} H_k\circ \boldsymbol{G}_N(\boldsymbol{x}^{N}).
\]
Now if $\boldsymbol{y}^{N} \in \Theta^k$, then $\boldsymbol{H}_N(\boldsymbol{y}^{N})\in \Theta^k$, and therefore
\(
\partial_{y_j}H_k(y)=\partial_{y_j}H_k\circ \boldsymbol{G}_N(\boldsymbol{H}_N(\boldsymbol{y}^{N}))
=\delta_{j,k}.
\) To summarise, we have shown:
\begin{itemize}
\item[\pref{it:partial-delta}] for all $j,k\in \{1,\dots,N\}$ and all $\boldsymbol{y}^{N} \in \Theta^k$, we have $\partial_{y_j}H_k(\boldsymbol{y}^{N})=\delta_{j,k}$.
\end{itemize}

To prove (p\ref{it:partial-mixed})-(p\ref{it:partial-second-eq}), we first note that for any $j,k\in \{1,\dots,N\}$
\begin{equation*}
\partial_{y_j} G_k(\boldsymbol{H}_N(\boldsymbol{y}^{N})) = \sum_{l=1}^{N} \partial_{x_l} G_k(\boldsymbol{H}_N(\boldsymbol{y}^{N})) \partial_{y_j} H_l(\boldsymbol{y}^{N}) = \delta_{j,k},
\end{equation*}
and hence 
\begin{align*}
\left(\partial_{y_j} H_l(\boldsymbol{y}^{N}) \right)_{l \in \lbrace 1, \ldots, N \rbrace} = \left((\boldsymbol{G}'_N)^{-1}(\boldsymbol{H}_N(\boldsymbol{y}^{N})) \right)_j,
\end{align*}
where the subindex denotes the $j$-th column of $(\boldsymbol{G}'_N)^{-1}$. The higher order regularity properties of $\boldsymbol{H}_N$ follow from this expression and the second order differentiability properties of $\boldsymbol{G}_N$.

\end{proof}



\subsection*{Acknowledgment}
GL is supported by the Austrian
Science Fund (FWF): Project F5508-N26, which is part of the Special Research
Program `Quasi-Monte Carlo Methods: Theory and Applications'. WS is supported by a special Upper Austrian Government grant.
We want to express our gratitude to two anonymous referees for valuable comments, suggestions and improvements on an  earlier version of this article.
WS wants to thank Yufei Zhang for several helpful discussions on this topic.
\end{document}